\DeclareMathOperator{\DP}{DP}
\DeclareMathOperator{\FQ}{FQ}
\colorlet{myGreen}{green!50!black}
\colorlet{myLightgreen}{green}
\colorlet{myRed}{red!90!black}
\definecolor{myBlue}{rgb}{0.25, 0.0, 1.0}
\definecolor{myLightBlue}{rgb}{0.39, 0.58, 0.93}
\colorlet{myViolet}{myBlue!55!myRed}
\definecolor{myOrange}{rgb}{1.0, 0.66, 0.07}
\definecolor{magenta}{rgb}{0.94, 0.05, 0.53}
\definecolor{AO}{rgb}{0.0, 0.5, 0.0}
\definecolor{phthaloblue}{rgb}{0.0, 0.06, 0.54}
\definecolor{pistachio}{rgb}{0.58, 0.77, 0.45}
\definecolor{darkgoldenrod}{rgb}{0.72, 0.53, 0.04}
\newif\ifsmall 
\newif\ifbig 
\newtheorem{thm}{Theorem}
\newtheorem{coro}[thm]{Corollary}
\newtheorem{claim}[thm]{Claim}
\theoremstyle{remark}
\newtheorem{remark}[thm]{Remark}
\theoremstyle{definition}
\newtheorem{defin}[thm]{Definition}
\newtheorem{example}[defin]{Example}
\crefname{defin}{Definition}{Definitions}
\crefname{thm}{Theorem}{Theorems}
\crefname{coro}{Corollary}{Corollaries}
\crefname{rem}{Remark}{Remarks}
\author[]{Nina Klobas\footnote{Department of Computer Science, Durham University, UK. Email: \href{mailto:nina.klobas@durham.ac.uk} {\texttt{\normalshape nina.klobas@durham.ac.uk}}} \,
and 
Matjaž Krnc\footnote{The Faculty of Mathematics, Natural Sciences and Information Technologies, University of Primorska, Slovenia. Email: \href{mailto:matjaz.krnc@upr.si}{\texttt{\normalshape matjaz.krnc@upr.si}}}}
\title{Fast recognition of some parametric graph families}
\begin{document}
\maketitle
\begin{abstract}

  We identify all $[1, \lambda, 8]$-cycle regular $I$-graphs and all $[1, \lambda, 8]$-cycle regular double generalized Petersen graphs. As a consequence we describe linear recognition algorithms for these graph families.
  Using structural properties of folded cubes we devise a $o(N \log N)$ recognition algorithm for them.
  We also study their $[1,\lambda,4]$, $[1,\lambda,6]$ and $[2, \lambda, 6]$-cycle regularity and settle the value of parameter $\lambda$.

\end{abstract}
\section{Introduction}
Important graph classes such as
bipartite graphs, 
(weakly) chordal graphs, 
(line) perfect graphs and
(pseudo)forests 
are defined or characterized by their cycle structure.
A particularly strong description of a cyclic structure is the notion of \emph{cycle-regularity}, introduced by Mollard \cite{Mollard:1991}:
\begin{quotation}
\emph{For integers $l, \lambda, m$ a simple graph is $[l, \lambda, m]$-cycle regular if every path on $l+1$ vertices belongs to exactly $\lambda$ different cycles of length $m$.}
\end{quotation}
It is perhaps natural that cycle-regularity mostly appears  in the literature in the context of symmetric graph families such as hypercubes, Cayley graphs or circulants. Indeed, Mollard showed that certain extremal $[3,1,6]$-cycle regular graphs correspond exactly to the graphs induced by the two middle layers of odd-dimensional hypercubes.
Also, for
$[2,1,4]$-cycle regular graphs,  
Mulder \cite{Mulder:1979} showed that their degree is minimized in the case of Hadamard graphs, or in the case of hypercubes.
In relation with other graph families, Fouquet and Hahn \cite{Fouquet/Hahn:2001} described the symmetric aspect of certain cycle-regular classes,  while in \cite{Krnc/Wilson:2017} authors
describe all $[1,\lambda,8]$-cycle regular members of generalized Petersen graphs, and use this result to obtain a linear recognition algorithm for generalized Petersen graphs.
Understanding the structure of subgraphs of hypercubes which avoid all $4$-cycles does not seem to be easy.
Indeed, a question of Erd\H{o}s 
regarding how many edges can such a graph contain remains open after more than $30$ years \cite{MR1254162}.

In this paper we study cycle-regularity and isomorphism structure of three graph families, namely the folded cubes and two natural generalizations of generalized Petersen graphs: $I$-graphs and double generalized Petersen graphs. 
$I$-graphs, introduced in the Foster census \cite{Foster:1988}, are trivalent or cubic graphs with edges connecting vertices of two star polygons.
Double generalized Petersen graphs consist of two identical copies of generalized Petersen graphs, where instead of connecting the vertices inside a star polygon, we connect the vertices from two different star polygons accordingly.
These graphs were first introduced in 2012 by Zhou and Feng \cite{Zhou/Feng:2012}.
Folded cubes were studied by Brouwer in 1983 \cite{Brouwer:1983} and are formed by identifying antipodal vertices of the hypercube graph.

We describe which graphs from the first two graph families are  $[1,\lambda,8]$-cycle regular and then
use those results to devise corresponding
recognition algorithms which run in linear time.
We also
observe the  $[1,\lambda,4]$, $[1,\lambda,6]$ and $[2, \lambda, 6]$-cycle regularity of folded cubes
and using their structure property construct a $o(N \log N)$ time recognition algorithm for them.
We proceed by describing these three graph families, alongside the relevant related work.

\begin{figure}[ht!]
  \centering
  \begin{subfigure}{\ifbig.28 \fi \ifsmall .20 \fi \textwidth}
    \centering

\includegraphics[width=\linewidth]{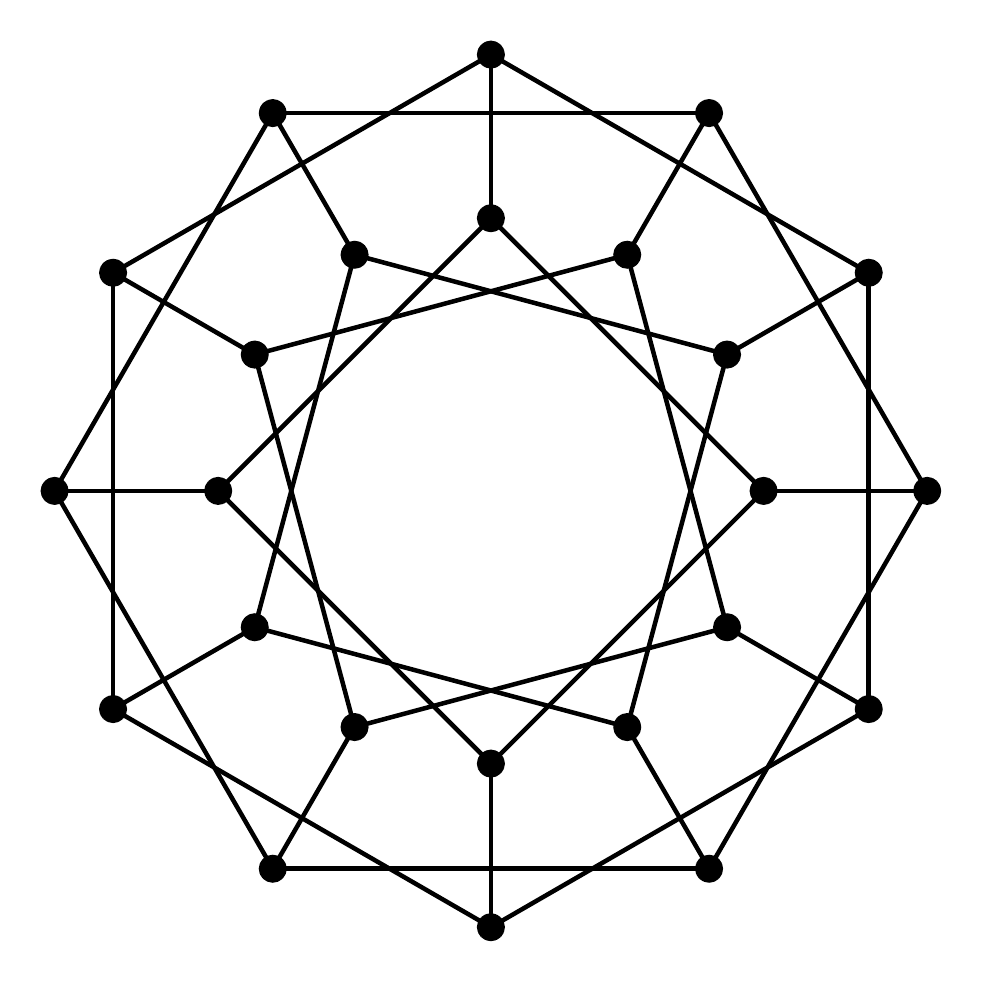}
  \end{subfigure}
  \quad
  \begin{subfigure}{\ifbig.22 \fi \ifsmall .18 \fi \textwidth}
    \centering
    \includegraphics[width=\linewidth]{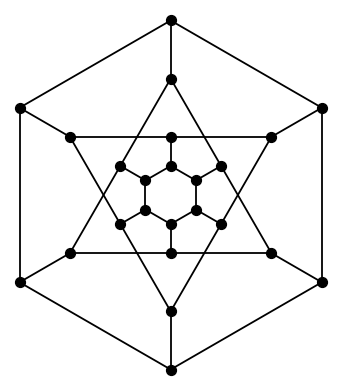}
  \end{subfigure}
  \quad
    \begin{subfigure}{\ifbig.26 \fi \ifsmall .20 \fi \textwidth}
    \centering
    \includegraphics[width=\linewidth]{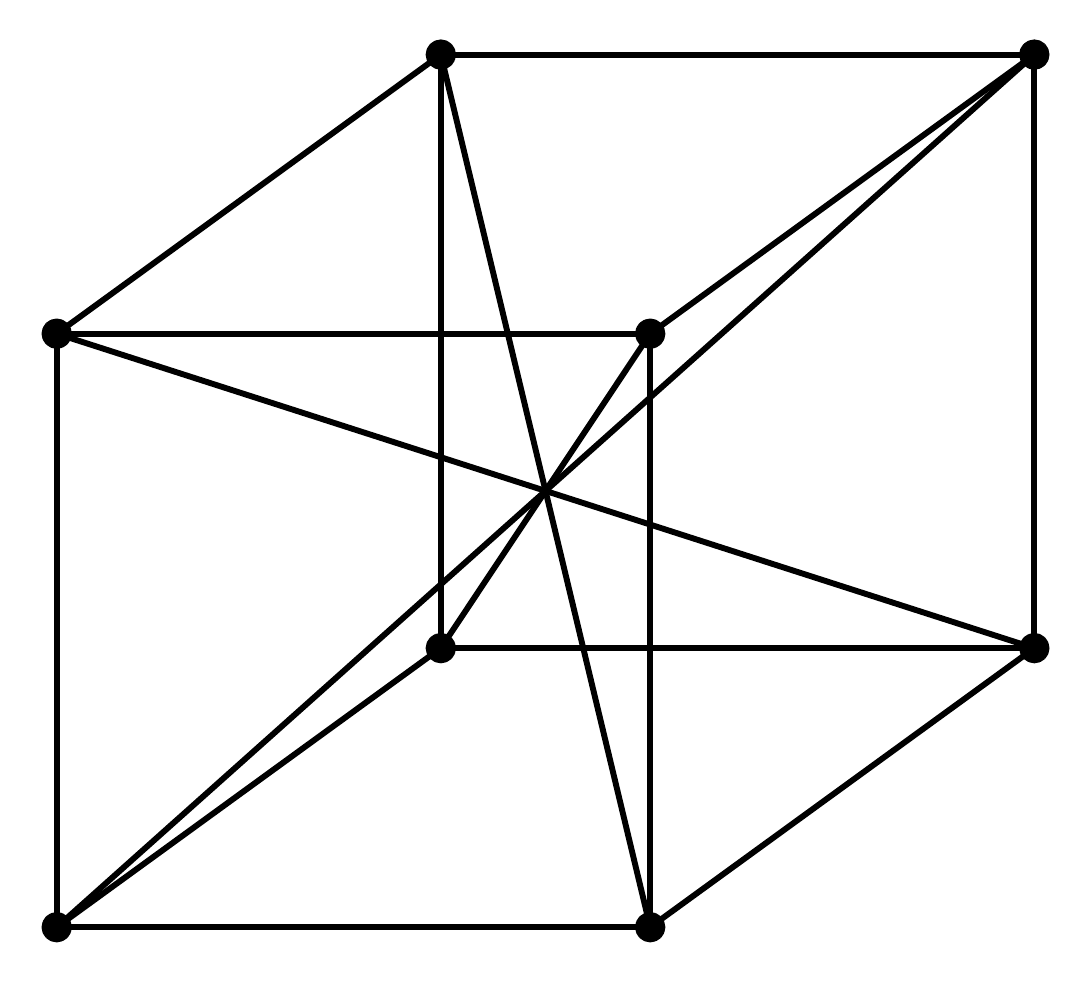}
  \end{subfigure}
  \caption{$I$-graph $I(12,2,3)$, double generalized Petersen graph $\DP(6,1)$, folded cube $\FQ_4$.}
\end{figure}


\subsection{Related work}

Generalized Petersen graphs were introduced in 1950 by Coxeter \cite{Coxeter:1950} and later named by Watkins \cite{Watkins:1969} in 1969.
Some of the results known for this family include identifying generalized Petersen graphs that are Hamiltonian \cite{Alspach:1983,Alspach/Robinson/Rosenfeld:1981,Castagna/Prins:1972},
hypo-Hamiltonian \cite{Bondy:1972}, Cayley \cite{Nedela/Skoviera:1995,Lovrecic:1997}, or partial cubes \cite{Klavzar/Lipovec:2003}, and finding their automorphism group \cite{Frucht/Graver/Watkins:1971} or determining isomorphic members of the family \cite{Petkovsek/Zakrajsek:2009}. Additional aspects of the above-mentioned family are well surveyed in \cite{Chartrand/Hevia/Wilson:1992,Holton/Sheehan:1993}.
For this family, Watkins \cite{Watkins:1969} had studied the structure of $8$-cycles in 1969, while linear recognition was settled by Krnc and Wilson \cite{Krnc/Wilson:2017}.

Two natural generalizations of generalized Petersen graphs are $I$-graphs and double generalized Petersen graphs.
The family of $I$-graphs has been studied extensively with respect to their
automorphism group and isomorphisms \cite{Petkovsek/Zakrajsek:2009,Boben/Pisanski/Zitnik:2005,Horvat/Pisanski/Zitnik:2012}.
In addition to that, Horvat et. al. \cite{Horvat/Pisanski/Zitnik:2012} described their connectedness and girth, while Boben et al. \cite{Boben/Pisanski/Zitnik:2005} studied their bipartiteness, vertex transitivity, and also considered some configurations which arise from bipartite $I$-graphs.
For double generalized Petersen graphs, Kutnar and Petecki \cite{Kutnar/Petecki:2016} characterized their automorphism group and also consider their hamiltonicity, vertex-coloring and edge-coloring.

Some papers, e.g. \cite{Mirafzal:2016,Ming/Meijie:2006}, attribute first studies of folded cubes to El-Amawy and Latifi \cite{Latifi/Amawy:1991}, where they used their structure to develop efficient routing algorithms for broadcasting.
However, they were mentioned already by  Brouwer \cite{Brouwer:1983} and Terwilliger \cite{Terwilliger:1988},
where their  distance regularity structure was studied.
Some known results of these graphs include characterizing
cyclic structure \cite{Ming/Meijie:2006}, edge-fault-tolerant properties \cite{Ming/Meijie/Zhengzhong:2006}, hamiltonian-connectivity, strongly Hamiltonian-laceability \cite{Hsieh/Kuo:2007} and their automorphism group \cite[pg.~265]{Brouwer/Cohen/Neumaier:1989} (see also \cite{Mirafzal:2016}).

\subsection{Our contributions}

We study the cycle structure of families of $I$-graphs, double generalized Petersen graphs and folded cubes and discuss their  $[1,\lambda,8]$ or $[1,\lambda,4]$, $[1,\lambda,6]$, $[2, \lambda, 6]$-cycle regularity. Our results are summarized below.
\begin{thm}\label{thm:lin-Igr}
  An arbitrary $I$-graph  is never $[1,\lambda,8]$-cycle regular, except when isomorphic to $I(n,j,k)$ where $j=1$ and
  \[(n,k)\in \lbrace (3,1), (4,1),(5,2),(8,3), (10,2),(10,3), (12,5),(13,5),(24,5), (26,5) \rbrace.\]
\end{thm}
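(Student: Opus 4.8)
The plan is to turn $[1,\lambda,8]$-cycle regularity into a purely arithmetic condition on the parameters, with the understanding that we seek a common value $\lambda\ge 1$ (otherwise every girth${}>8$ graph would be vacuously regular). Write the vertices of $I(n,j,k)$ as outer vertices $u_0,\dots,u_{n-1}$ and inner vertices $v_0,\dots,v_{n-1}$, with outer edges $u_iu_{i+j}$, inner edges $v_iv_{i+k}$, and spokes $u_iv_i$ (indices mod $n$). The shift $u_i\mapsto u_{i+1}$, $v_i\mapsto v_{i+1}$ is always an automorphism, so each of the three edge classes forms a single orbit; hence the graph is $[1,\lambda,8]$-cycle regular exactly when the three numbers $\lambda_{\mathrm{out}}$, $\lambda_{\mathrm{in}}$, $\lambda_{\mathrm{sp}}$ of $8$-cycles through one fixed outer edge, inner edge, and spoke are all equal to the same value. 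The first step is to obtain closed formulas for these three quantities.

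To do so I would enumerate the $8$-cycles by their number of spokes. Since spokes are the only edges joining the two layers, a closed walk uses an even number of them, and since each maximal horizontal run between two consecutive spokes has length at least $1$, an $8$-cycle contains $0$, $2$ or $4$ spokes. A spokeless $8$-cycle lies in one layer and exists only when that layer has a component of length $8$, i.e. $n=8\gcd(n,j)$ (outer) or $n=8\gcd(n,k)$ (inner). An $8$-cycle with two spokes consists of one outer run of length $a$ and one inner run of length $b$ with $a+b=6$ and $a,b\ge1$, subject to $\varepsilon_1 a j+\varepsilon_2 b k\equiv 0 \pmod n$ for signs $\varepsilon_1,\varepsilon_2\in\{\pm1\}$. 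An $8$-cycle with four spokes alternates outer--inner--outer--inner in unit steps, giving $(\varepsilon_1+\varepsilon_3)j+(\varepsilon_2+\varepsilon_4)k\equiv 0 \pmod n$. Counting the solutions of these congruences that produce genuinely simple cycles, and recording how many pass through a fixed edge of each class, yields the three formulas as explicit expressions in $n$, $j$, $k$ and the relevant gcd's $\gcd(n,j),\gcd(n,k),\gcd(n,2j),\gcd(n,2k)$.

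With the formulas in hand I would split on the known isomorphism theory of $I$-graphs \cite{Boben/Pisanski/Zitnik:2005,Horvat/Pisanski/Zitnik:2012}. Using that $I(n,j,k)\cong I(n,aj\bmod n,ak\bmod n)$ for every $a$ coprime to $n$, together with the fact that $I(n,j,k)$ is isomorphic to a generalized Petersen graph precisely when $\gcd(n,j)=1$ or $\gcd(n,k)=1$, I would separate two regimes. If one of $\gcd(n,j),\gcd(n,k)$ equals $1$, the graph is a generalized Petersen graph, and the classification of its $[1,\lambda,8]$-cycle regular members is already known \cite{Krnc/Wilson:2017}; normalising to $j=1$ reproduces exactly the ten listed pairs $(n,k)$. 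The remaining, genuine case is $\gcd(n,j)>1$ and $\gcd(n,k)>1$, where both layers are disconnected and the graph is not a generalized Petersen graph at all; here I would show directly from the formulas that the system $\lambda_{\mathrm{sp}}=\lambda_{\mathrm{out}}=\lambda_{\mathrm{in}}$ has no solution, so no genuine $I$-graph is $[1,\lambda,8]$-cycle regular.

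The main obstacle is this last step. Balancing $\lambda_{\mathrm{sp}}$ against $\lambda_{\mathrm{out}}$ and $\lambda_{\mathrm{in}}$ becomes a system of congruence and divisibility conditions whose solution set must be shown to be empty in the genuine regime, and the difficulty is twofold. First, the generic count formulas acquire correction terms exactly when a candidate $8$-cycle degenerates (repeated vertices, or a run wrapping around a too-short component), so they are valid only outside a finite list of small parameters. Second, in those small cases several cycle types collide and the counts must be checked by hand or by a short computer search. Organising the divisibility analysis so that only finitely many small triples $(n,j,k)$ survive for direct verification, and confirming that none of them is simultaneously a genuine $I$-graph and $[1,\lambda,8]$-cycle regular, is where the bulk of the effort will lie.
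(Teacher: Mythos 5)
Your core decomposition is the same as the paper's: both exploit the rotation automorphism to reduce cycle-regularity to equality of three per-orbit counts, and both classify $8$-cycles by their number of spokes ($0$, $2$ or $4$), turning each type into a congruence in $n,j,k$ (the paper's Tables~\ref{tab:I8conditions} and~\ref{tab:I8cycles} are exactly your ``closed formulas,'' recorded as existence conditions plus contribution triples $\tau(C)$). Where you diverge is the endgame. You propose to split on whether $\gcd(n,j)=1$ or $\gcd(n,k)=1$, outsource the generalized Petersen case to the known classification of Krnc and Wilson, and prove emptiness in the genuine regime where both rims are disconnected. The paper instead stays self-contained: it observes that, with only three parameters, at most four independent cycle-type conditions can coexist, notes that $C^*$ (the $4$-spoke type) exists whenever $j\neq k$ and contributes $(2,4,2)$ --- so the spoke orbit outpaces the rims unless specific other types appear to compensate --- and then enumerates the handful of coexistence patterns that can balance the triple ($C_0$ with $C_7$; $C_0,C_1,C_2$; $C_3,C_4,C^*$; $C_5,C_6,C^*$; no cycles at all), solving each small linear system directly. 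This sidesteps the two difficulties you correctly identify with fully general count formulas (degenerate cycles and colliding types at small parameters), at the cost of a longer but finite case table; your route buys a shorter argument in the GP regime but adds a dependency on the isomorphism criterion for when an $I$-graph is a generalized Petersen graph. One small correction: do not restrict to $\lambda\ge 1$, since the theorem's list contains $G(3,1)$, which is $[1,0,8]$-cycle regular; you need the paper's observation that every $I$-graph with $n>3$ contains an $8$-cycle, so $G(3,1)$ is the unique vacuous case and must be added back to your list.
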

\begin{thm}\label{thm:lin-DP}
  A double generalized Petersen graph is never $[1,\lambda,8]$-cycle regular, except when isomorphic to $\DP(n,k)$ where $(n,k)\in \lbrace (5,2), (10,2) \rbrace$.
\end{thm}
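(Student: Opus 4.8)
The plan is to reduce the statement to a finite cycle census. Since a path on $l+1=2$ vertices is just an edge, being $[1,\lambda,8]$-cycle regular means that every edge lies in exactly $\lambda$ cycles of length $8$. First I would fix the standard description of $\DP(n,k)$: two outer $n$-cycles $\{u_iu_{i+1}\}$ and $\{x_ix_{i+1}\}$, the spokes $u_iv_i$ and $x_iw_i$, and the crossed inner edges $v_iw_{i+k}$ and $w_iv_{i+k}$ (all indices modulo $n$), so that the inner vertices of one copy are joined to the inner vertices of the other with step $k$. Under the obvious rotational and copy-swapping automorphisms the edge set splits into a small number of orbits — outer edges, spoke edges, and inner edges — so it suffices to count the $8$-cycles through one representative of each orbit and to determine when these three numbers coincide. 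This mirrors the generalized Petersen analysis, but the cross-copy inner edges change the cycle census and explain why the surviving list is smaller.

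Next I would enumerate, for each edge type, the possible shapes of an $8$-cycle through it, where a shape is the cyclic word over $\{O,S,I\}$ (outer, spoke, inner) recording the edge types traversed. Two parity constraints prune the list drastically. Splitting the vertices into the outer part $\{u,x\}$ and inner part $\{v,w\}$, only spokes cross the boundary, so the number of spokes used is even; splitting instead into the two copies $\{u,v\}$ and $\{x,w\}$, only the inner edges cross, so the number of inner edges used is also even. With length fixed at $8$, this leaves only a handful of admissible shapes. For each admissible shape I would translate the requirement that the indices return to their starting value into a congruence in $n$ and $k$ (conditions of the form $2k\equiv 0$, $4k\equiv 0$, or $k\pm c\equiv 0 \pmod n$ for small constants $c$), and count the resulting cycles as an explicit function of $n$ and $k$, taking care that for small $n$ or small $k$ some shapes degenerate, wrap around, or coincide and thereby contribute extra cycles.

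With the three orbit-counts in hand as piecewise functions of $n$ and $k$, regularity becomes the demand that they all be equal. For all sufficiently large $n$ the counts stabilise to fixed values that are pairwise distinct — typically an outer edge lies in strictly more $8$-cycles than an inner edge — so no large graph can be regular, and attention is confined to finitely many small $n$. The remaining finite list I would settle by direct inspection, which should leave exactly $\DP(5,2)$ and $\DP(10,2)$. I expect the main obstacle to be the exhaustive, non-redundant enumeration of the $8$-cycle shapes together with the correct treatment of the degenerate small cases: one must be sure that no shape is overlooked and that wrapped or overlapping cycles (which arise precisely when $n$ or $k$ is small) are counted exactly once, since it is exactly these low-order coincidences that balance the three orbit-counts and produce the two exceptional graphs.
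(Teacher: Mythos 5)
Your proposal follows essentially the same route as the paper: split the edges into the three orbits (outer, spoke, inner) under the rotation and copy-swap automorphisms, enumerate the admissible $8$-cycle shapes using the two parity constraints (even number of spokes from the outer/inner bipartition, even number of inner edges from the copy bipartition), translate each shape into congruences on $n$ and $k$, and compare the resulting per-orbit counts, reducing to a finite check that leaves only $\DP(5,2)$ and $\DP(10,2)$. Your double parity observation is a clean justification of the case split the paper states without proof, but the overall argument is the same.
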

We also observe the following isomorphism property for double generalized Petersen graphs.
\begin{restatable}{thm}{isomorphDP}
  \label{thm:isomorphDP}
  Let $n,k$ be positive integers, where $n$ is even and $k < n/2$. Then the graph $\DP(n,k)$ is isomorphic to $\DP(n, n/2 - k)$.
\end{restatable}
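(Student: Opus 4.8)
The plan is to prove the statement by exhibiting an explicit isomorphism between the two graphs. Recall the standard labelling in which $\DP(n,k)$ has vertex set $\{x_i,y_i,u_i,v_i : i\in\mathbb{Z}_n\}$, two outer $n$-cycles on $(x_i)$ and $(y_i)$ given by the edges $x_ix_{i+1}$ and $y_iy_{i+1}$, the spokes $x_iu_i$ and $y_iv_i$, and the inner edges $u_iv_{i+k}$ and $v_iu_{i+k}$ for every $i$. Writing $k'=n/2-k$, I would first record that $k'$ is again a legitimate parameter in the same range: since $1\le k<n/2$ and both quantities are integers, $1\le k'<n/2$, so no degenerate case arises. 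I denote the vertices of the target graph $\DP(n,k')$ by $x_i',y_i',u_i',v_i'$.

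The key observation is that the inner neighbours of a vertex $u_i$ are exactly $v_{i+k}$ and $v_{i-k}$, and that introducing a \emph{relative} offset of $n/2$ between the two inner families converts the jump $k$ into $k'=n/2-k$. Concretely, I would define the map $\phi\colon \DP(n,k)\to\DP(n,k')$ by
\[
  \phi(x_i)=x_i',\qquad \phi(u_i)=u_i',\qquad \phi(y_i)=y_{i+n/2}',\qquad \phi(v_i)=v_{i+n/2}'.
\]
Because $n$ is even, $i\mapsto i+n/2$ is a well-defined permutation of $\mathbb{Z}_n$, so $\phi$ is a bijection on vertices; note also that the shift on the $Y$-cycle is forced by the shift on $V$, so that the spokes $y_iv_i$ are preserved. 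It then remains only to check that $\phi$ carries each of the four edge types of $\DP(n,k)$ to an edge of $\DP(n,k')$.

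The outer cycles and the spokes are immediate: $x_ix_{i+1}\mapsto x_i'x_{i+1}'$ and $y_iy_{i+1}\mapsto y_{i+n/2}'y_{i+1+n/2}'$ are outer edges, while $x_iu_i\mapsto x_i'u_i'$ and $y_iv_i\mapsto y_{i+n/2}'v_{i+n/2}'$ are spokes. The only real content is the inner edges, and this is the step I expect to be the crux, resting on the two congruences $i+k+n/2\equiv i-k'\pmod n$ and $(i+k)+k'\equiv i+n/2\pmod n$. Using them, the edge $u_iv_{i+k}$ maps to $u_i'v_{i+k+n/2}'=u_i'v_{i-k'}'$, which is an inner edge of $\DP(n,k')$, and the edge $v_iu_{i+k}$ maps to $v_{i+n/2}'u_{i+k}'=u_{i+k}'v_{(i+k)+k'}'$, again an inner edge; hence $\phi$ is an isomorphism. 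The whole argument is essentially this bookkeeping, and the one point that requires care is that the half-turn $i\mapsto i+n/2$ must be applied to exactly one of the two inner families (here $V$, and correspondingly $Y$): shifting both, or neither, leaves the parameter unchanged, whereas the $n/2$ offset between $U$ and $V$, combined with $-k\equiv n-k$, is precisely what turns $k$ into $n/2-k$.
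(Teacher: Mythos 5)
Your map is exactly the isomorphism used in the paper: fix one outer rim together with its attached inner family, shift the other outer rim and its inner family by $n/2$, and verify that the inner jumps $\pm k$ become $\pm(n/2-k)$ via the congruence $i+k+n/2\equiv i-(n/2-k)\pmod n$. The argument is correct and coincides with the paper's proof up to a renaming of the vertex families.
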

For all three families we devise efficient recognition algorithms, which are robust in the sense of Spinard \cite{Spinrad/Robust:2003}.
\begin{coro}\label{cor:lin}
  $I$-graphs and double generalized Petersen graphs can be recognized in linear time. Folded cubes can be recognized in $o(N \log N)$ time, where $N=|V(G)|+|E(G)|$.
\end{coro}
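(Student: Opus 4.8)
The plan is to obtain the stated running times from recognition algorithms built on top of the structural classifications, with the complexities derived by a careful, step-by-step accounting. In all three cases the skeleton is the same: extract a small amount of local cycle data, use it together with the relevant characterization to recover the family parameters, and finish with an explicit reconstruction that doubles as a Spinrad-style certificate.

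First, for $I$-graphs and double generalized Petersen graphs I would exploit that both families are cubic, so $N=\Theta(n)$ and every vertex has exactly three neighbours. The key observation is that in a cubic graph the number of $8$-cycles through a fixed edge can be computed in constant time, because a bounded-depth search from an endpoint visits at most $3\cdot 2^{6}$ vertices; consequently the vector recording, for each edge, how many $8$-cycles contain it is available in total time $O(N)$. By \cref{thm:lin-Igr,thm:lin-DP}, outside of finitely many exceptional parameter pairs these graphs are \emph{not} $[1,\lambda,8]$-cycle regular, and the proofs of those theorems pin down exactly how the $8$-cycle counts differ across the edge roles (outer rim, inner rim, and spokes). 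I would use this to sort the edges into their roles in linear time, then read off a consistent cyclic ordering together with the associated jumps, obtaining candidate parameters $(n,j,k)$ and $(n,k)$ respectively; here \cref{thm:isomorphDP} is convenient for normalising the double-Petersen parameters so that a single candidate has to be tested. A last pass rebuilds the named graph and compares it with the input edge by edge, confirming membership and emitting the certificate, all in $O(N)$. The finitely many cycle-regular exceptions are handled separately by a constant-size lookup followed by one linear isomorphism test.

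The main obstacle in this part is guaranteeing that the edge-role classification is unambiguous whenever the input is genuinely a member: I must verify that the $8$-cycle counts really do separate spokes from rim edges for every non-exceptional choice of parameters, and that the smallest degenerate cases do not derail the search. The bulk of the argument is therefore a case analysis riding on the computations already performed for \cref{thm:lin-Igr,thm:lin-DP}.

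For folded cubes the degree grows with the graph, so the cubic argument is unavailable and a structural route is needed. Since $\FQ_d$ has $2^{d-1}$ vertices and is $d$-regular, a candidate input must be $d$-regular on $2^{d-1}$ vertices, which fixes the single candidate dimension $d=\Theta(\log N)$ and lets us reject most inputs after an $O(N)$ scan. To confirm the candidate I would build a Hamming-type coordinate labelling by breadth-first search from a fixed vertex and check, using the distance-regularity and the antipodal-quotient description of $\FQ_d$, that the adjacencies agree with those of the folded cube. Because each vertex's label is forced by its already-labelled neighbours, the verification avoids re-scanning the whole graph, and the delicate point is to keep the per-edge work strictly below the naive $\Theta(\log N)$ factor so that the total stays in $o(N\log N)$ rather than merely $O(N\log N)$.
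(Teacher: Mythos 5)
Your treatment of $I$-graphs and double generalized Petersen graphs is essentially the paper's argument: constant-time octagon values in a cubic graph via a bounded-radius search, \cref{thm:lin-Igr,thm:lin-DP} to guarantee a non-constant octagon value outside a finite exception list, recovery of an edge orbit (hence the spokes, hence the parameters) from the resulting edge partition, and a final linear reconstruction serving as the certificate, with the finitely many cycle-regular members handled by direct lookup. One small caution: the paper does not claim that the octagon values separate all three edge roles simultaneously; it only extracts the partition class of minimum positive cardinality, which is a single orbit, and then passes to the spoke matching (taking adjacent edges if that orbit induces a $2$-factor). Your phrasing ``sort the edges into their roles'' promises slightly more than is needed or than the case analysis delivers, but this is a presentational point, not a gap.

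The folded-cube part, however, has a genuine gap. The entire difficulty there is that $\FQ_n$ is edge-transitive, so no local computation distinguishes a diagonal edge from a hypercube edge, and a breadth-first ``Hamming-type coordinate labelling'' from one vertex is not forced: the antipodal identification creates $4$-cycles through every edge, so the labels of a vertex's neighbours do not determine its own label without first knowing which incident edge is the diagonal one. You name the delicate point (keeping per-edge work below $\Theta(\log N)$) but supply no mechanism for it, and you cannot invoke distance-regularity of the input, since that is a property to be verified, not assumed. The paper's solution is precisely the missing ingredient: by arc-transitivity one may \emph{declare} an arbitrary edge to be diagonal, then propagate the diagonal class through $4$-cycles (each newly found $4$-cycle identifies one new diagonal edge and permits the deletion of two hypercube edges, which is what keeps the amortized cost down; see \cref{alg:recognizingFQ}); once the diagonal matching $D$ is known, $G - D$ is tested against $Q_{n-1}$ with the linear hypercube recognition algorithm of Hammack et al.\ and the diagonals are checked against the resulting labelling (\cref{alg:FQsubprocedure}). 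Without this two-stage reduction, or some substitute for it, your outline does not yield an algorithm, let alone one running in $o(N\log N)$.
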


We also study the $4$ and $6$-cycle structure of folded cubes and get the following results.
\begin{restatable}{thm}{thmFQncycles}
\label{thm:FQ4-cycles}
Folded cubes $\FQ_1$ and $\FQ_2$  are $[1,0,4]$-cycle regular. Folded cube $\FQ_4$ is $[1,9,4]$-cycle regular. Any other folded cube $\FQ_n$ is $[1,n-1,4]$-cycle regular.
\end{restatable}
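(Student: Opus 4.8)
The plan is to work in the standard algebraic model of the folded cube. Writing $m = n-1$, I regard $\FQ_n$ as the Cayley graph of $\mathbb{Z}_2^{m}$ with connection set $S = \{e_1, \dots, e_m, \mathbf{1}\}$, where $e_i$ is the $i$-th unit vector and $\mathbf{1} = e_1 + \cdots + e_m$; equivalently, $\FQ_n$ is the hypercube $Q_{m}$ together with the perfect matching of \emph{diagonal} edges joining each vertex to its antipode. Thus every vertex has degree $n$, and edges come in two kinds: \emph{cubical} edges (difference $e_i$) and \emph{diagonal} edges (difference $\mathbf{1}$). The degenerate cases $\FQ_1$ (a single vertex) and $\FQ_2$ (where $\mathbf{1}=e_1$, so the graph carries no $4$-cycle) I dispose of directly, giving $\lambda = 0$.

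For $n \ge 3$ the graph is vertex-transitive, so it suffices to count the $4$-cycles through the edges incident with $\mathbf{0}$. The counting is organized through the bijection sending a $4$-cycle $u - v - w - z - u$ through a fixed edge $\{u,v\}$ to the pair $(w,z)$ with $w \in N(v)\setminus\{u\}$, $z \in N(u)\setminus\{v\}$, $w \sim z$ and $|\{u,v,w,z\}| = 4$; each admissible pair yields exactly one such cycle and conversely. For a cubical edge $\{x, x+e_i\}$ I split $w$ and $z$ according to whether each is reached by a cubical or a diagonal step, and in each of the four combinations I evaluate $w \oplus z$ and test membership in $S$. The controlling identities are: $e_i + e_j + e_k \in S$ exactly when $j=k$ (yielding the step $e_i$, i.e.\ the ordinary cubical squares) or when $m = 3$ and $\{i,j,k\} = \{1,\dots,m\}$ (yielding $\mathbf{1}$); the two mixed cubical/diagonal combinations land in $S$ only when $m = 3$; and the purely diagonal combination always contributes a single cycle because $e_i + \mathbf{1} + \mathbf{1} = e_i \in S$. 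Summing, a cubical edge lies in $(m-1) + 1 = n-1$ four-cycles when $m \ne 3$, and in $9$ four-cycles when $m = 3$.

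I then repeat the computation for a diagonal edge $\{x, x+\mathbf{1}\}$, where $w = x + \mathbf{1} + e_j$ and $z = x + e_k$, so $w \oplus z = \mathbf{1} + e_j + e_k$; the same arithmetic gives $n-1$ four-cycles for $m \ne 3$ and $9$ for $m = 3$, so the two edge types agree, as one expects from the arc-transitivity of $\FQ_n$. This matches the claimed values: $\lambda = n-1$ for every $\FQ_n$ with $n \notin \{1,2,4\}$ (in particular $n=3$ gives $\FQ_3 = K_4$ with $\lambda = 2$), and $\lambda = 9$ for $\FQ_4$.

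The main obstacle, and the source of the exceptional value for $\FQ_4$, is the coincidence available only at $m = 3$, namely $e_1 + e_2 + e_3 = \mathbf{1}$. For this single dimension the all-ones vector can be written as a sum of two or three distinct unit vectors, which activates the mixed and triple-cubical combinations that are inert for all other $n$ and produces the six extra $4$-cycles beyond the baseline $n-1 = 3$. The care required lies entirely in the bookkeeping of these coincidences: verifying that the four vertices of each putative cycle are genuinely distinct (this is precisely what fails for $\FQ_2$, where $e_1 = \mathbf{1}$), and that no $4$-cycle is counted twice under the $(w,z)$ correspondence.
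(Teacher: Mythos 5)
Your proof is correct, and the underlying count --- fix an edge and classify the two remaining vertices of a $4$-cycle according to whether each is reached by a cubical or a diagonal step --- is the same one the paper performs. The notable difference is the treatment of the exceptional case: the paper dispatches $\FQ_4$ at the outset via the isomorphism $\FQ_4 \simeq K_{4,4}$ and calls $\lambda=9$ trivial, whereas you keep $n=4$ inside the general framework and trace the six extra $4$-cycles to the single arithmetic coincidence $e_1+e_2+e_3=\mathbf{1}$ available only in $\mathbb{Z}_2^{3}$. Your Cayley-graph formulation over $\mathbb{Z}_2^{n-1}$ also replaces the paper's edge-label bookkeeping (cycles of the form $(d,i,d,i)$, $(e,d,e,d)$, $(e,i,e,i)$) with explicit membership tests $w \oplus z \in S$, which makes mechanical the verification that the mixed and triple-cubical configurations are impossible for $n \neq 4$; the paper instead appeals to \cref{rem:FQ} for the parity constraint ruling out $4$-cycles with an odd number of diagonal edges. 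Both routes give the same totals ($n-1$ in general, $9$ for $n=4$), so the difference is largely one of packaging, but yours has the small advantage of deriving the $\FQ_4$ value uniformly rather than by a separate identification, at the cost of somewhat heavier case bookkeeping.
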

\begin{restatable}{thm}{cyclesFQ}
\label{thm:FQ6-cycles}
Folded cubes $\FQ_1, \FQ_2$ and $\FQ_3$ are $[1,0,6]$-cycle regular. Folded cubes $\FQ_4$ and $\FQ_6$ are $[1,36,6]$ and $[1,200,6]$-cycle regular. Any other folded cube $\FQ_n$ is $[1,4(n-2)(n-1),6]$-cycle regular.
\end{restatable}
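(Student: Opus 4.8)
The plan is to work in the standard Cayley-graph model $\FQ_n = \mathrm{Cay}(\mathbb{Z}_2^{n-1}, S)$, where $S = \{a_1, \dots, a_{n-1}, a_n\}$ consists of the $n-1$ standard basis vectors together with the diagonal vector $a_n = a_1 + \cdots + a_{n-1}$; in this model $\FQ_n$ is $n$-regular on $2^{n-1}$ vertices. First I would reduce the problem to a single edge. Translations by group elements already make $\FQ_n$ vertex-transitive, and the linear involution fixing $a_2, \dots, a_{n-1}$ and swapping $a_1 \leftrightarrow a_n$ preserves $S$ (one checks $\phi(a_n) = a_n + a_2 + \cdots + a_{n-1} = a_1$), hence is a graph automorphism carrying a ``standard'' edge to a ``diagonal'' one; composing with coordinate permutations and translations yields edge-transitivity. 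Consequently every edge lies in the same number of $6$-cycles, so it suffices to count the $6$-cycles through the base edge $\{0, a_1\}$, exactly as in the companion count of \cref{thm:FQ4-cycles}.

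Next I would encode a $6$-cycle through $\{0,a_1\}$ as a sequence $(a_1, g_2, \dots, g_6)$ of elements of $S$ whose total sum is $0$ and whose six partial sums $0 = v_0, v_1, \dots, v_5$ are pairwise distinct. Writing $a_n = a_1 + \cdots + a_{n-1}$ and collecting multiplicities, the zero-sum condition forces the multiplicities of $a_1, \dots, a_{n-1}, a_n$ to all share the parity of the multiplicity of $a_n$. This splits the analysis into the even case (all multiplicities even, summing to $6$) and the odd case (all multiplicities odd). In the even case the only partition of $6$ that can supply six distinct vertices is $2+2+2$, since the patterns $6$ and $4+2$ confine the walk to a subgroup of order at most $4$. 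In the odd case every generator must appear an odd number of times, which for $n \ge 7$ is impossible, for $n = 5$ fails on parity, and for $n = 6$ forces all six generators to appear exactly once.

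For the generic $2+2+2$ hexagons I would fix the triple of directions $\{a_1, a_i, a_j\}$. A direct check shows that for $n \ge 4$ any three distinct generators are linearly independent (the only linear dependence among the $a_i$ is the length-$n$ relation $a_1 + \cdots + a_n = 0$), so the triple spans a copy of $Q_3$ containing the base edge. Counting $6$-cycles through a fixed edge of $Q_3$ gives exactly $8$ (equivalently, $Q_3$ has $16$ six-cycles), and since a $2+2+2$ hexagon determines its direction triple uniquely there is no over-counting across triples. With $\binom{n-1}{2}$ admissible triples this yields $8\binom{n-1}{2} = 4(n-1)(n-2)$ generic hexagons, which is already the whole answer for $n = 5$ and $n \ge 7$. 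For $n = 6$ I would add the ``rainbow'' hexagons from the odd case: fixing the first step $a_1$ and permuting the remaining five generators gives $5! = 120$ sequences, and since the only nonempty zero-sum subset of $S$ is all of $S$, the partial sums are automatically distinct; this contributes $120$, for a total of $80 + 120 = 200$.

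The remaining cases are handled separately and form the main technical nuisance rather than a deep obstacle. For $n \le 3$ the graph has at most $4$ vertices, so there are no $6$-cycles and the count is $0$. For $n = 4$ the generators are precisely the four odd-weight vectors of $\mathbb{Z}_2^3$, so $\FQ_4 \cong K_{4,4}$; here the odd-case pattern $3+1+1+1$ also contributes, and the cleanest route is to count $6$-cycles through an edge of $K_{4,4}$ directly, obtaining $36$. The part I expect to require the most care is the simplicity bookkeeping --- verifying in each multiplicity pattern that the partial sums are genuinely distinct, so that the closed walks are actual $6$-cycles rather than shorter cycles traversed with backtracking --- together with confirming that the even family, the odd family, and the distinct direction triples are counted without overlap.
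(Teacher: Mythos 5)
Your proposal is correct, and the numbers check out at every branch: $8\binom{n-1}{2}=4(n-1)(n-2)$ for the generic case, $80+120=200$ for $\FQ_6$, the direct $K_{4,4}$ count of $36$ for $\FQ_4$, and the degenerate small cases. The route, however, is organized quite differently from the paper's. Both arguments reduce to a single edge by transitivity and encode $6$-cycles as sequences of edge labels (the paper's labels $1,\dots,n-1,d$ are exactly your generators $a_1,\dots,a_{n-1},a_n$), but the paper then exhaustively lists all positional patterns, treating the diagonal label as special: twelve types $C_0,\dots,C_{11}$ through a hypercube edge, four types $C_a,\dots,C_d$ through $d^*$, plus the exceptional one- and three-diagonal families that occur only in $\FQ_6$ and $\FQ_4$, and it must carry out the hypercube-edge and diagonal-edge counts separately and observe that they agree. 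Your Cayley-graph formulation replaces all of this with two structural observations: the parity constraint on multiplicities (which immediately isolates the $2{+}2{+}2$ pattern for general $n$ and confines the all-odd pattern to $n\le 6$), and the reduction of each $2{+}2{+}2$ family to the count of $8$ six-cycles through an edge of $Q_3$. The $a_1\leftrightarrow a_n$ swap automorphism also gives you edge-transitivity up front, so the diagonal edge never needs separate treatment. What the paper's enumeration buys in exchange is the explicit catalogue of non-equivalent $6$-cycle types (\cref{tab:FQ6-cycles1,tab:FQ6-cycles2}), which it reuses almost verbatim for the $[2,\lambda,6]$ count in \cref{thm:FQ2-6-cycles}; your parity argument would need to be redone with an ordered pair of initial steps to recover that. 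The one place where your write-up is thinner than it should be is the $\FQ_4$ case: falling back on $K_{4,4}$ is legitimate, but you should state the isomorphism argument (the connection set is the set of all odd-weight vectors of $\mathbb{Z}_2^3$) rather than leave it as an aside, since the $3{+}1{+}1{+}1$ pattern is the only point where your general machinery does not directly produce the answer.
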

\begin{restatable}{thm}{cyclesFQpathOfLengthTwo}
\label{thm:FQ2-6-cycles}
Folded cube $\FQ_4$ is not $[2,\lambda,6]$-cycle regular. Folded cubes $\FQ_1, \FQ_2$ and $\FQ_3$ are $[2,0,6]$-cycle regular. Folded cube $\FQ_6$ is $[2,2,6]$-cycle regular. Any other folded cube $\FQ_n$ is $[2,4(n-2),6]$-cycle regular.
\end{restatable}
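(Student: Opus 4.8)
The plan is to realise each folded cube as a Cayley graph and to reduce the counting of $6$-cycles through a fixed $2$-path (a path on three vertices) to a finite generator-bookkeeping problem. I would model $\FQ_n$ as $\mathrm{Cay}(\mathbb{Z}_2^{\,n-1},S)$ with $S=\{e_1,\dots,e_{n-1},e^*\}$, where $e^*=e_1+\cdots+e_{n-1}$ is the ``diagonal'' generator produced by the fold; every $s\in S$ is an involution, so $S=S^{-1}$ and $\FQ_n$ is vertex-transitive. Using vertex-transitivity I may place the centre of the $2$-path at $0$, so a $2$-path is recorded by an unordered pair $\{a,b\}\subseteq S$ of distinct generators, and a $6$-cycle through it is exactly a tuple $(g_1,g_2,g_3,g_4)\in S^4$ with $g_1+g_2+g_3+g_4=a+b$ whose six partial-sum vertices $a,\,0,\,b,\,b+g_1,\,b+g_1+g_2,\,b+g_1+g_2+g_3$ are pairwise distinct. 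Reading the closing walk from $b$ towards $a$ fixes an orientation, so this count carries no spurious factor of $2$.

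The second step is to split $2$-paths into the two types forced by $S$: the coordinate--coordinate paths, where $\{a,b\}=\{e_i,e_j\}$ and the target $t:=a+b$ has Hamming weight $2$, and the coordinate--diagonal paths, where $\{a,b\}=\{e_i,e^*\}$ and $t=\sum_{l\neq i}e_l$ has weight $n-2$ (two diagonal edges are impossible, as $e^*$ is unique). For each type I would enumerate the admissible tuples by the number $j$ of the $g_i$ equal to $e^*$. Since $e^*$ is an involution, two consecutive $e^*$'s, or two consecutive equal coordinate generators, force an immediate return and are discarded, while the surviving coordinate generators must XOR to a prescribed vector; a weight-parity homomorphism $\mathbb{Z}_2^{\,n-1}\to\mathbb{Z}_2$ restricts the admissible $j$ according to the parity of $n$. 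For the coordinate--coordinate type this produces the genuine hypercube hexagons (three distinct coordinates each used twice, contributing $4(n-3)$ after choosing the free third coordinate) together with a $j=2$ diagonal family contributing $4$; for the coordinate--diagonal type the surviving tuples use $e^*$ once together with a repeated pair $\{e_p,e_p\}$, contributing $4$ for each of the $n-2$ choices of $p$. Both bookkeepings evaluate to $4(n-2)$, giving $[2,4(n-2),6]$-cycle regularity for all $n\notin\{1,2,3,4,6\}$ (including the boundary value $n=5$, where the potential extra families are killed by the simplicity constraint or by parity).

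Finally I would dispose of the finitely many low-dimensional cases directly. For $n\le 3$ the graph has at most four vertices, hence carries no $6$-cycle and is trivially $[2,0,6]$-cycle regular; the graphs $\FQ_4\cong K_{4,4}$ and $\FQ_6$ are settled by direct enumeration of their $6$-cycles through $2$-paths, from which the exceptional entries of the statement are read off. The main obstacle is exactly this small regime: the generic count tacitly assumes that the weight-$2$ and weight-$(n-2)$ targets are far apart and that no short coordinate sum can accidentally realise $t$, $e^*$, or a previously visited vertex, but when $n-1$ is small the diagonal $e^*$ collides in weight with short coordinate sums, so tuples with an odd number of $e^*$'s become admissible and several formally distinct generator multisets collapse onto the same cycle or fail simplicity. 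The real work is therefore to prove that for $n\ge 7$ no such extra tuple can close up and that the two path types always contribute equally, while carefully tabulating the extra and coincident $6$-cycles for $n\in\{4,6\}$ — and it is precisely this tabulation that produces the value $2$ for $\FQ_6$ and the exceptional behaviour asserted for $\FQ_4$.
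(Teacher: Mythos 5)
Your proposal is essentially the paper's own proof recast in Cayley-graph language: the paper likewise splits the $2$-paths into the same two types (two hypercube edges, normalised to $(1,2)$, versus a hypercube edge together with the diagonal $d^*$), enumerates the closing $6$-cycles by the number of diagonal edges they contain, and obtains $4(n-3)+4=4(n-2)$ for the first type and $4(n-2)$ for the second, with the one-diagonal and three-diagonal families appearing only for $n=6$ and $n=4$ respectively. The only substantive difference is that you defer the exceptional cases $n\in\{4,6\}$ to an unperformed ``direct tabulation'' and simply assert that it returns the values in the statement; since those two dimensions are precisely where the theorem's exceptional claims live (and where label coincidences such as $\FQ_4\cong K_{4,4}$ make the generic bookkeeping unreliable), that tabulation needs to be carried out explicitly rather than asserted.
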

\begin{restatable}{conjecture}{EightCyclesFQ}
\label{con:FQ1-8-cycles}
Folded cubes $\FQ_1, \FQ_2$ and $\FQ_3$ are $[1,0,8]$-cycle regular.
Folded cubes $\FQ_4, \FQ_6$ and $\FQ_8$ are $[1,36,8], [1,3580,8]$ and $[1,10794,8]$-cycle regular. Any other folded cube $\FQ_n$ is $[1,27n^3 - 133n^2 + 210 n - 104,8]$-cycle regular.
\end{restatable}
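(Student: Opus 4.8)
The plan is to exploit the large automorphism group of $\FQ_n$ to reduce the regularity claim to a single counting problem, and then to resolve that count by a careful classification of short closed walks. First I would record the convenient algebraic model: writing vertices of $\FQ_n$ as $\mathbb{Z}_2^{n-1}$, the graph is the Cayley graph with connection set $S=\{e_1,\dots,e_{n-1},\mathbf{1}\}$, where $\mathbf{1}=e_1+\cdots+e_{n-1}$; equivalently $\FQ_n$ is the quotient of the hypercube $Q_n$ by the antipodal map $x\mapsto x+\mathbf{1}$. The only linear relation among the $n$ elements of $S$ is $\sum_{g\in S}g=0$, so any permutation of these $n$ generators extends to a linear graph automorphism fixing $0$; together with the translations this shows $\FQ_n$ is arc-transitive (as is in any case well known from its distance-transitivity). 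Arc-transitivity makes the $[1,\lambda,8]$-regularity automatic: every edge lies on the same number of $8$-cycles, so it only remains to compute that number $\lambda$. I would compute it either directly, by fixing the edge $\{0,\mathbf{1}\}$ (all edges being equivalent) and counting the $8$-cycles through it, or globally, via $\lambda=8\,C_8/|E|$ with $|E|=n2^{n-2}$ and $C_8$ the total number of $8$-cycles.

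Next I would translate $8$-cycles into index sequences. Modelling $\FQ_n$ as $\mathbb{Z}_2^n/\langle\mathbf{1}\rangle$ with generators $\bar e_1,\dots,\bar e_n$, an $8$-cycle through $0$ is a sequence of indices $i_1,\dots,i_8\in\{1,\dots,n\}$ whose partial sums are pairwise distinct in the quotient and whose total sum vanishes there; the latter means $\sum_k e_{i_k}\in\{0,\mathbf{1}\}$ in $\mathbb{Z}_2^n$. Thus the multiset $\{i_1,\dots,i_8\}$ either has every index of even multiplicity (sum $0$) or every index of odd multiplicity (sum $\mathbf{1}$). The second type forces all $n$ indices to occur, hence $n\le 8$; moreover $n$ odd numbers cannot add to $8$, so this type can only occur for even $n\in\{4,6,8\}$. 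This already explains the shape of the statement: for all $n\notin\{4,6,8\}$ with $n\ge 4$ only the even-multiplicity type survives, which I expect to yield the single cubic polynomial, while the three even values $n\in\{4,6,8\}$ receive extra ``diagonal'' contributions and the tiny cases $n\le 3$ are degenerate (e.g. $\FQ_3\cong K_4$ has no $8$-cycle, giving $\lambda=0$).

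I would then enumerate the even-multiplicity cycles by their multiplicity partition of $8$, namely $(2,2,2,2)$, $(4,2,2)$, $(4,4)$, $(6,2)$ and $(8)$. For each partition I would choose the participating indices (a polynomial count in $n$, e.g.\ $\binom{n-1}{3}$ for the dominant $(2,2,2,2)$ type once the fixed edge's direction is included), count the cyclic arrangements of the eight steps, and retain only those whose partial sums are pairwise distinct in the quotient. Summing these contributions and restricting to cycles through the fixed edge should produce $27n^3-133n^2+210n-104$, the leading $27n^3$ arising from the $(2,2,2,2)$ family. The exceptional values $36$, $3580$, $10794$ for $\FQ_4,\FQ_6,\FQ_8$ I would obtain by adding the odd-multiplicity contributions computed by the same method (or by direct verification, since these graphs are small), and likewise confirm $\lambda=0$ for $\FQ_1,\FQ_2,\FQ_3$.

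The main obstacle is the \emph{distinctness condition in the quotient}: two partial sums $s_j,s_k$ coincide in $\FQ_n$ not only when $s_j=s_k$ but also when $s_j=s_k+\mathbf{1}$ in $\mathbb{Z}_2^n$. The antipodal coincidences are genuinely new relative to counting $8$-cycles in the ordinary hypercube, and they both destroy some would-be cycles and create identifications among short partial sums exactly when few generators are involved; handling them correctly requires an inclusion–exclusion that subtracts the closed $8$-walks which are not simple cycles. Getting these degenerate subtractions right for every multiplicity partition, and thereby pinning down the exact integer coefficients of the cubic (and the exceptional constants), is where the bulk of the bookkeeping and the real risk of error lie.
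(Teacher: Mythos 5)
This statement is stated in the paper as a \emph{conjecture}; the authors offer no proof of it (they explicitly write that they merely ``suspect'' the formula), so there is no proof of record to compare your proposal against. Your framework is nevertheless the natural one and is consistent with how the paper handles $4$- and $6$-cycles: arc-transitivity reduces everything to counting $8$-cycles through one fixed edge; modelling $\FQ_n$ as the Cayley graph on $\mathbb{Z}_2^{n-1}$ with connection set $\{e_1,\dots,e_{n-1},\mathbf{1}\}$, a closed $8$-walk has a step multiset that is either all-even or all-odd on all $n$ generators, which correctly isolates $n\in\{4,6,8\}$ as the only cases with an extra contribution and explains why $\FQ_1,\FQ_2,\FQ_3$ give $\lambda=0$. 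As written, though, your text is an outline: the arrangement counts and the removal of non-simple closed walks are exactly the content of the statement, and you defer them.

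More importantly, if you actually execute your plan you will find that it \emph{contradicts} the conjectured polynomial, so you should not expect to ``produce $27n^3-133n^2+210n-104$.'' For $n\ge 5$ any four of the $n$ generators are linearly independent, so the all-even count equals the hypercube count: $162\binom{n-1}{3}$ cycles through the edge supported on four labels (here $162=27\cdot 3!$, with $27$ the number of connected chord diagrams on $8$ points) plus $4\binom{n-1}{2}$ supported on three labels (Hamiltonian cycles of a $Q_3$ through an edge), giving $(n-1)(n-2)(27n-79)=27n^3-160n^2+291n-158$. This disagrees with the stated cubic by $27(n-1)(n-2)$. Your own consistency test at the exceptional values vindicates the corrected polynomial, not the stated one: at $n=8$ the all-odd contribution is exactly $7!=5040$ (every ordering of the remaining seven distinct generators yields a simple cycle), and $5754+5040=10794$ with $5754=(7)(6)(137)\cdot\frac{1}{?}$ --- more precisely $162\binom{7}{3}+4\binom{7}{2}=5754$ --- matches the conjectured $\FQ_8$ value, whereas the stated polynomial would force $6888+5040=11928$. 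Likewise at $n=6$ the all-odd type $(3,1,1,1,1,1)$ contributes $720+1200=1920$ and $1660+1920=3580$ matches. So the exceptional constants in the statement are consistent with your decomposition only if the generic cubic is corrected to $(n-1)(n-2)(27n-79)$; your proposal therefore cannot prove the statement as written, and the gap to close is not just the bookkeeping you flag but a reconciliation (or correction) of the conjectured polynomial itself.
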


Unless specified otherwise, all graphs in this paper are finite, simple, undirected and connected.
For a given graph $G$ we use a standard notation for a set of vertices $V(G)$ and a set of edges $E(G)$.
A $k$-cycle $C$ in $G$ is a cycle on vertices $v_1, v_2, \dots , v_k$ from $V(G)$, using edges $e_1, e_2, \dots, e_k$ from $E(G)$; we will write it in two ways: as $( v_1, \dots , v_k )$, when we are discussing $I$-graphs and double generalized Petersen graphs, or $(e_1, \dots, e_k)$, with folded cubes.
We use the notation $G(n,k)$ for a generalized Petersen graph. For integers $a$ and $b$ we denote with $\gcd(a,b)$ the greatest common divisor of $a$ and $b$ respectively.

A definition of a \emph{robust algorithm} was introduced by Spinrad in \cite{Spinrad/Robust:2003} and states that an algorithm is robust if it takes as an input an arbitrary graph and it either determines the parameters of the graph belonging to our family or concludes that an input does not belong to the desired graph family.

This paper is structured as follows.
In \cref{section-Igraphs} we observe basic properties of $I$-graphs and study their $8$-cyclic structure, more precisely we characterize all $[q, \lambda, 8]$-cycle regular members of the family. In \cref{section-DPgraphs} we do the same for double generalized Petersen graphs. In \cref{section-FodledCubes} we focus our study on folded cubes and inspect their $4, 6$ and $8$-cyclic structure. At the end in \cref{section-RecognitionAlgorithms} we provide recognition algorithms for all of the previously mentioned families.

\section{\texorpdfstring{\boldmath$I$}{I}-graphs\label{section-Igraphs}}
For integers $n,j,k$, where $n \geq 3$ and $n \geq j,k \geq 1$, an {$I$-graph} $I(n,j,k)$, is a graph with the vertex set $\lbrace u_0, u_1, \dots, u_{n-1}, w_0, w_1, \dots, w_{n-1} \}$ and the edge set consisting of outer edges $u_i u_{i+j}$, inner edges $w_i w_{i + k}$ and spoke edges $u_i w_i$, where the subscripts are taken modulo $n$.
Without loss of generality we always assume that $j,k < n / 2$. Since $I(n,j,k)$ is isomorphic to $I(n,k,j)$, we restrict ourselves to cases when $j \leq k$.
Generalized Petersen graphs form a subclass of $I$-graphs, where parameter $j$ has value $1$.

It is well known \cite{Boben/Pisanski/Zitnik:2005} that an $I$-graph $I(n,j,k)$ is disconnected whenever $d = \gcd(n,j,k)>1$. In this case it consists of $d$ copies of $I(n/d,j/d,k/d)$. Therefore, throughout the paper we consider only graphs $I(n,j,k)$ where $\gcd(n,j,k)=1$.
We also know \cite{Horvat/Pisanski/Zitnik:2012} that two $I$-graphs $I(n,j,k)$ and $I(n, j', k')$ are isomorphic if and only if there exists an integer $a$, which is relatively prime to $n$, for which either $\{ j', k' \} = \{ a j \pmod n, ak \pmod n \}$ or $\{j', k' \} = \{aj \pmod n, -ak \pmod n \}$.
Throughout the paper, whenever we discuss $I$-graphs with certain parameters, we consider only the lexicographically smallest possible parameters by which the graph is uniquely determined.

\subsection{Equivalent \texorpdfstring{\ifbig \boldmath \fi $8$}{8}-cycles}
Every $I$-graph admits a rotation $\rho$ defined in a natural way: $\rho(u_i) = u_{i +1}$, $\rho(w_i) = w_{i+1}$.
Clearly, applying $n$ times the rotation $\rho$ yields an identity automorphism.
When acting on $I$-graphs with $\rho$  we get $3$ edge orbits: orbit of outer edges $E_J$, orbit of spoke edges $E_S$ and orbit of inner edges $E_I$.
Edges from the same orbit $E_J, E_S,$ or $E_I$ have the same octagon value, which we denote by $\sigma_J, \sigma_S$ and  $\sigma_I$, respectively. Therefore the \emph{octagon value of an $I$-graph} is said to be a triple $(\sigma_J, \sigma_S, \sigma_I)$.

In the $I$-graph $I(n,j,k)$, outer edges $E_J$ induce $\gcd(n,j)$ cycles of length $n / \gcd(n,j)$, inner edges $E_I$ induce $\gcd(n,k)$ cycles of length $n / \gcd(n,k)$ and spoke edges $E_S$ induce a perfect matching.

We say that two $8$-cycles of an $I$-graph are \emph{equivalent} if we can map one into the other using only rotation $\rho$.

Let $G\simeq I(n,j,k)$ be an arbitrary $I$-graph and let $C$ be one of its $8$-cycles. With $\gamma(C)$ we denote the number of equivalent $8$-cycles to $C$ in $G$. Each $8$-cycle contributes to the octagon value of a double generalized Petersen graph. We denote the contributed amount with $\tau(C)$, defined as the triple $(\delta_j, \delta_s, \delta_i)$, where we calculate $\delta_j, \delta_s, \delta_i$ by counting the number of outer, spoke and inner edges of a cycle and multiply these numbers with $\gamma / n$.
If graph $G$ admits $m$ non-equivalent $8$-cycles, one may calculate its octagon value $(\sigma_J, \sigma_S, \sigma_I)$ as
\begin{equation*}\label{sigma values of graph eq}
  (\sigma_J, \sigma_S, \sigma_I) = \sum _{i=1}^m  \tau(C_i).
\end{equation*}

The following claim serves also as an example of the above-mentioned definitions. Keep in mind that we are considering only $I$-graphs with $\gcd(n,j,k)=1$.
\begin{claim}
  For $I(n,j,k)$ where $n > 3$ and integers $k,j < n / 2$ there always exists an $8$-cycle.
\end{claim}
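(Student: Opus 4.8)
The plan is to exhibit an explicit $8$-cycle in every case, treating the generic situation $j \neq k$ and the degenerate situation $j = k$ separately. Throughout I would use the standing assumptions $\gcd(n,j,k)=1$ and $1 \le j,k < n/2$, which are exactly what is needed to guarantee that the hand-built walks below have distinct vertices.

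For the generic case $j \neq k$, I would use the \emph{commutator cycle} obtained by walking $+j$ along the outer rim, dropping a spoke, walking $+k$ along the inner rim, climbing a spoke, and then undoing these two steps; concretely
\[
  C = (u_0,\ u_j,\ w_j,\ w_{j+k},\ u_{j+k},\ u_k,\ w_k,\ w_0).
\]
One checks directly that the consecutive pairs are, in order, an outer, spoke, inner, spoke, outer, spoke, inner, spoke edge, and that the walk closes automatically because the two outer steps $+j,-j$ cancel and the two inner steps $+k,-k$ cancel. The only real content is verifying that these $8$ vertices are distinct: the four $u$-vertices carry offsets $\{0,j,j+k,k\}$ and the four $w$-vertices carry the same set, so it suffices to show that $0,j,k,j+k$ are pairwise distinct modulo $n$. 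Here I would invoke $1 \le j,k < n/2$ to rule out $j+k\equiv 0$ as well as $j,k\equiv 0$, and the case hypothesis $j\neq k$ to separate $j$ from $k$.

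The main obstacle is the degenerate case $j=k$, where the commutator cycle collapses (the vertices $u_j$ and $u_k$ coincide). Here $\gcd(n,j,k)=1$ forces $\gcd(n,j)=1$, so both rims are single $n$-cycles, and I would instead take the $8$-cycle consisting of three outer edges, two spokes, and three inner edges,
\[
  C' = (u_0,\ u_j,\ u_{2j},\ u_{3j},\ w_{3j},\ w_{2j},\ w_j,\ w_0),
\]
whose inner steps have size $k=j$ and are therefore genuine inner edges. Distinctness of its vertices reduces to $0,j,2j,3j$ being pairwise distinct modulo $n$, which follows from $\gcd(n,j)=1$ (so $mj\equiv 0 \pmod n$ only when $n\mid m$) together with $n>3$ (so that $n\nmid 2$ and $n\nmid 3$). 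Combining the two cases produces an $8$-cycle for every admissible triple $(n,j,k)$, and no tool heavier than these two explicit cycles appears to be required.
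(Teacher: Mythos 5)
Your proposal is correct and matches the paper's argument: your ``commutator cycle'' for $j\neq k$ is exactly the paper's cycle $C^*$ (traversed from a different starting vertex), and your cycle for $j=k$ is exactly the paper's $C_7$. You merely supply the vertex-distinctness verifications that the paper leaves implicit.
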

Indeed,
if $k \neq j$ it is of the form
$$C^* =(w_0, w_{\pm k}, u_{\pm k}, u_{\pm k \pm j}, w_{\pm k \pm j}, w_{\pm j}, u_{\pm j},u_0). $$
If $k = j$ it is of the form
$$C_7 =(u_0, u_k, u_{2k}, u_{3k}, w_{3k}, w_{2k}, w_{k},w_{0}).$$

\subsection{Characterization of non-equivalent \texorpdfstring{\ifbig \boldmath \fi $8$}{8}-cycles}
The purpose of this section is to provide a complete list of all possible non-equivalent $8$-cycles that can appear in an $I$-graph.
For each such $8$-cycle we additionally determine the contribution towards the $(\sigma_j,\sigma_i,\sigma_s)$ values. Those results are summarized in \cref{tab:I8conditions,tab:I8cycles} and \cref{fig:cyclesInIGraphs}.

\begin{figure}[ht!]
  \ifsmall \centering \fi
  \begin{subfigure}{\ifbig .245 \fi \ifsmall .18 \fi \textwidth}
    \centering
    \includegraphics[width=\linewidth]{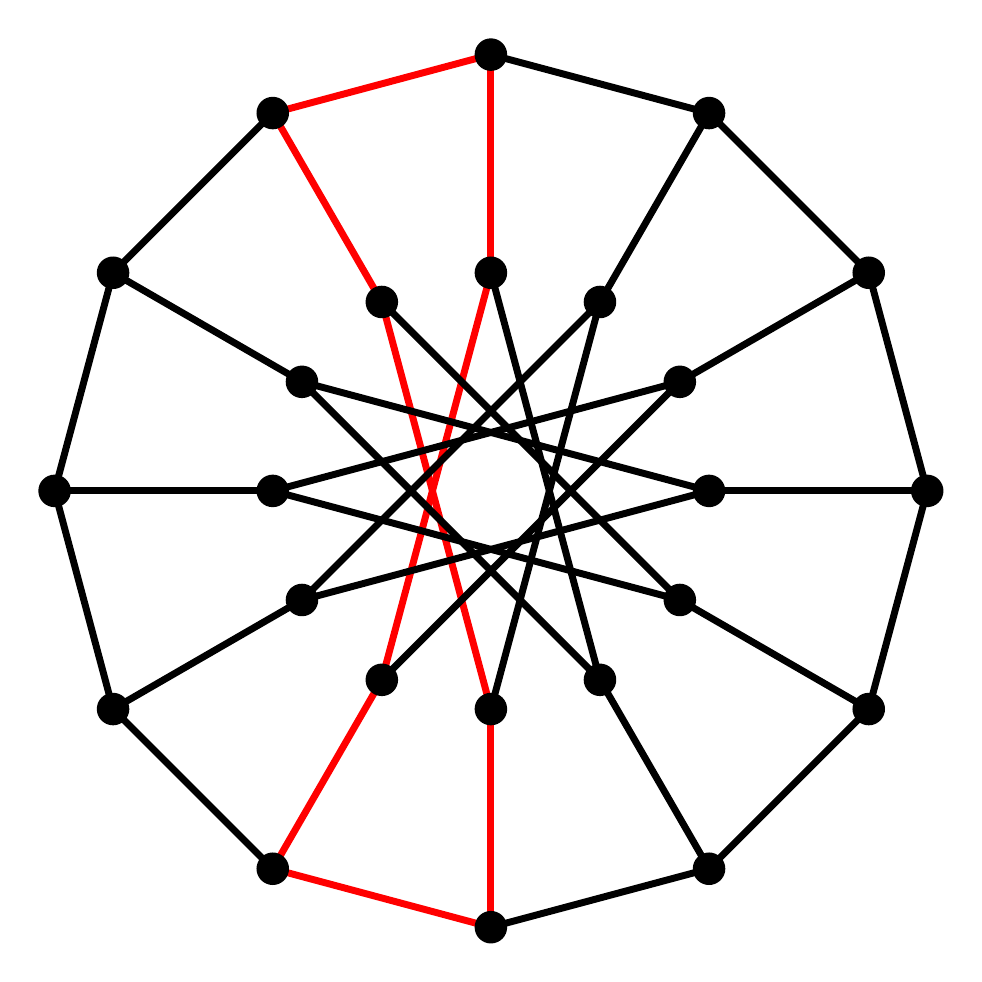}
    \caption{Cycle $C^*$.}
    \label{fig:IC*}
  \end{subfigure}
  \begin{subfigure}{\ifbig .245 \fi \ifsmall .18 \fi \textwidth}
    \centering
    \includegraphics[width=\linewidth]{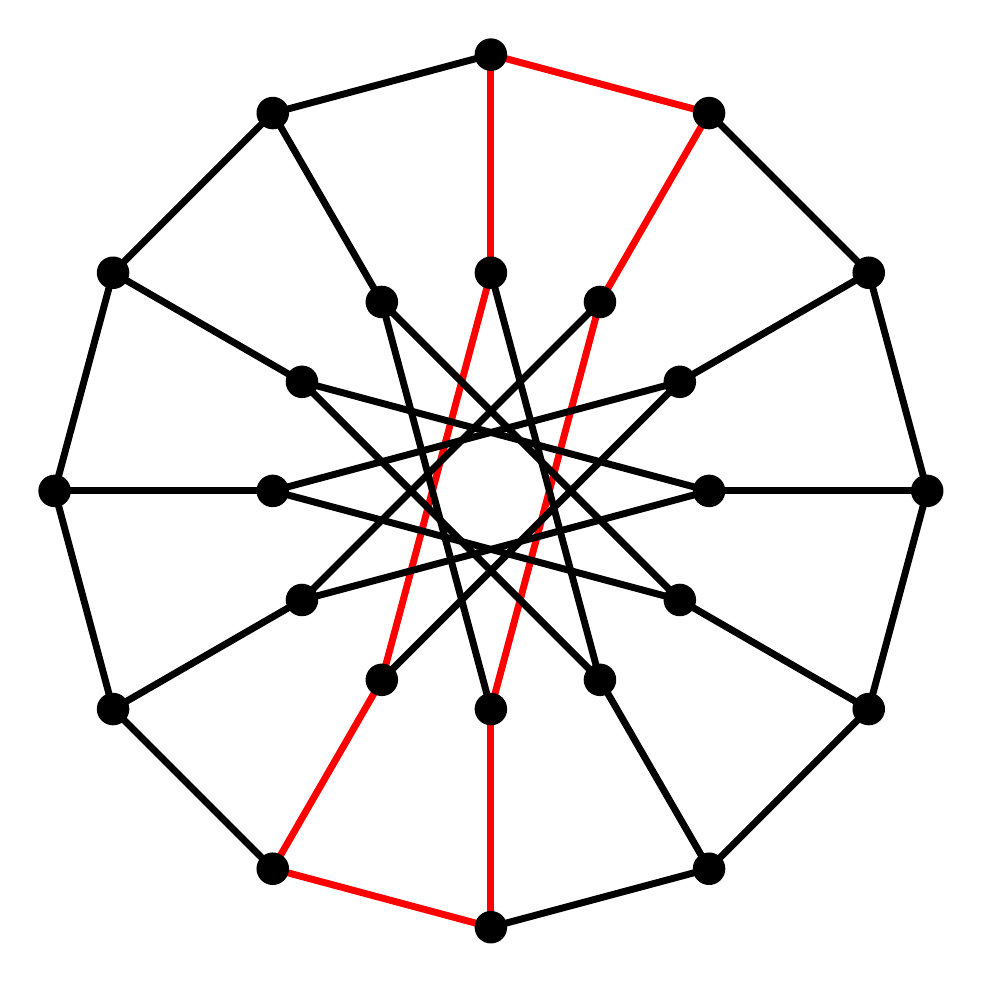}
    \caption{Cycle $C_0$.}
    \label{fig:IC0}
  \end{subfigure}
  \begin{subfigure}{\ifbig .245 \fi \ifsmall .18 \fi \textwidth}
    \centering
    \includegraphics[width=\linewidth]{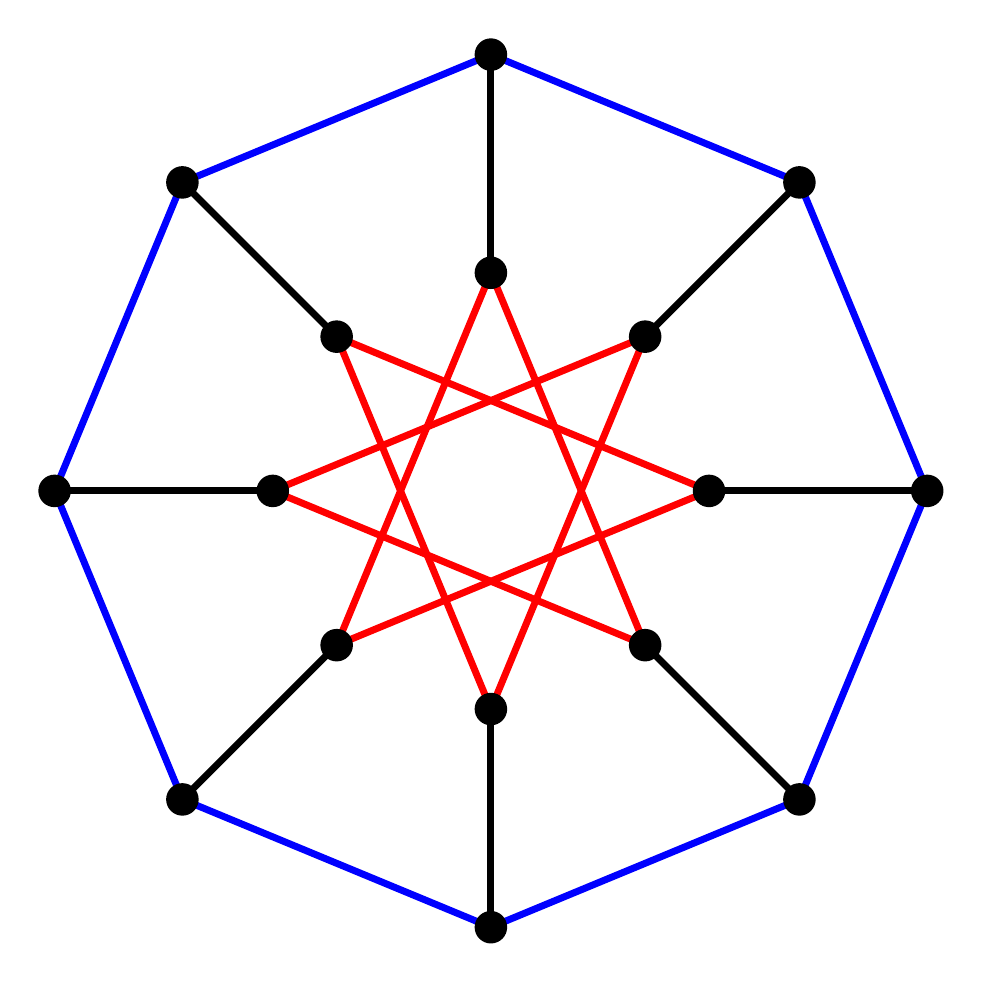}
    \caption{Cycle $C_1$ in blue and $C_2$ in red.}
    \label{fig:IC1C2}
  \end{subfigure}
  \begin{subfigure}{\ifbig .245 \fi \ifsmall .18 \fi \textwidth}
    \centering
    \includegraphics[width=\linewidth]{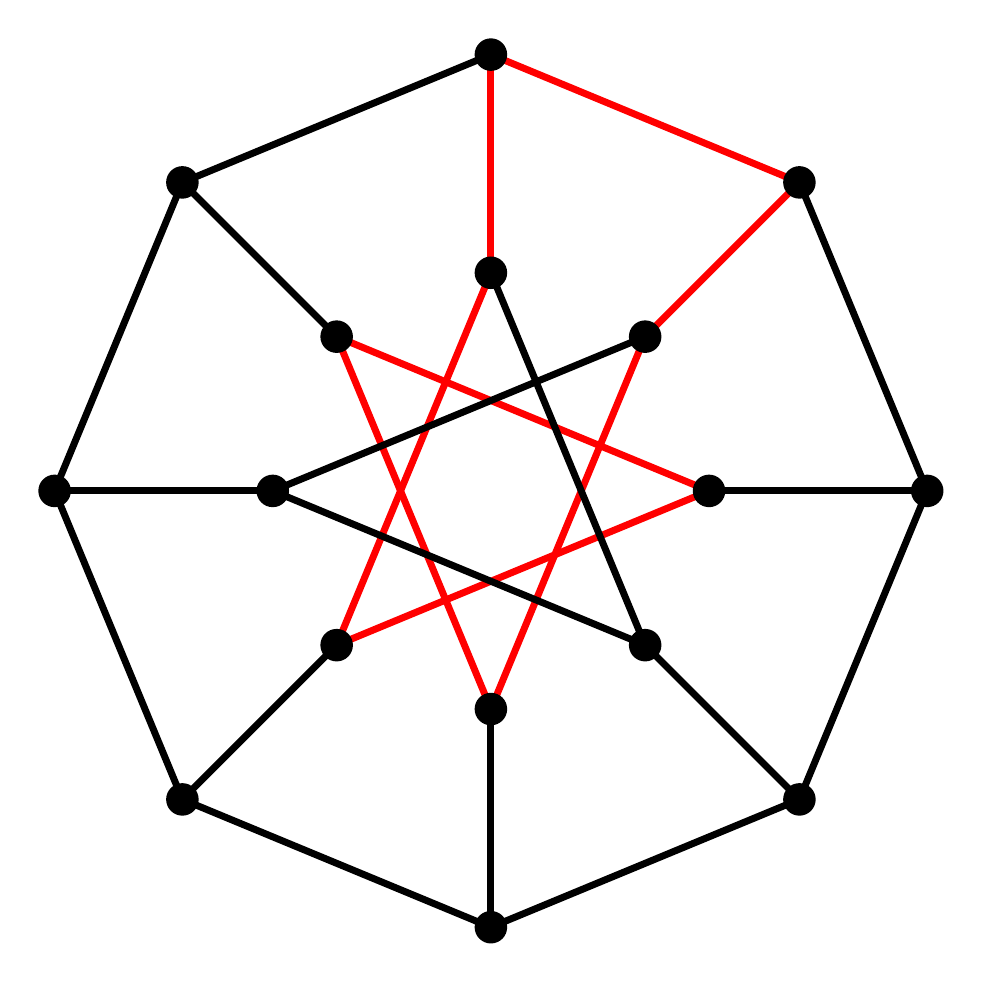}
    \caption{Cycle $C_3$.}
    \label{fig:IC3}
  \end{subfigure}
  \\
  \begin{subfigure}{\ifbig .245 \fi \ifsmall .18 \fi \textwidth}
    \centering
    \includegraphics[width=\linewidth]{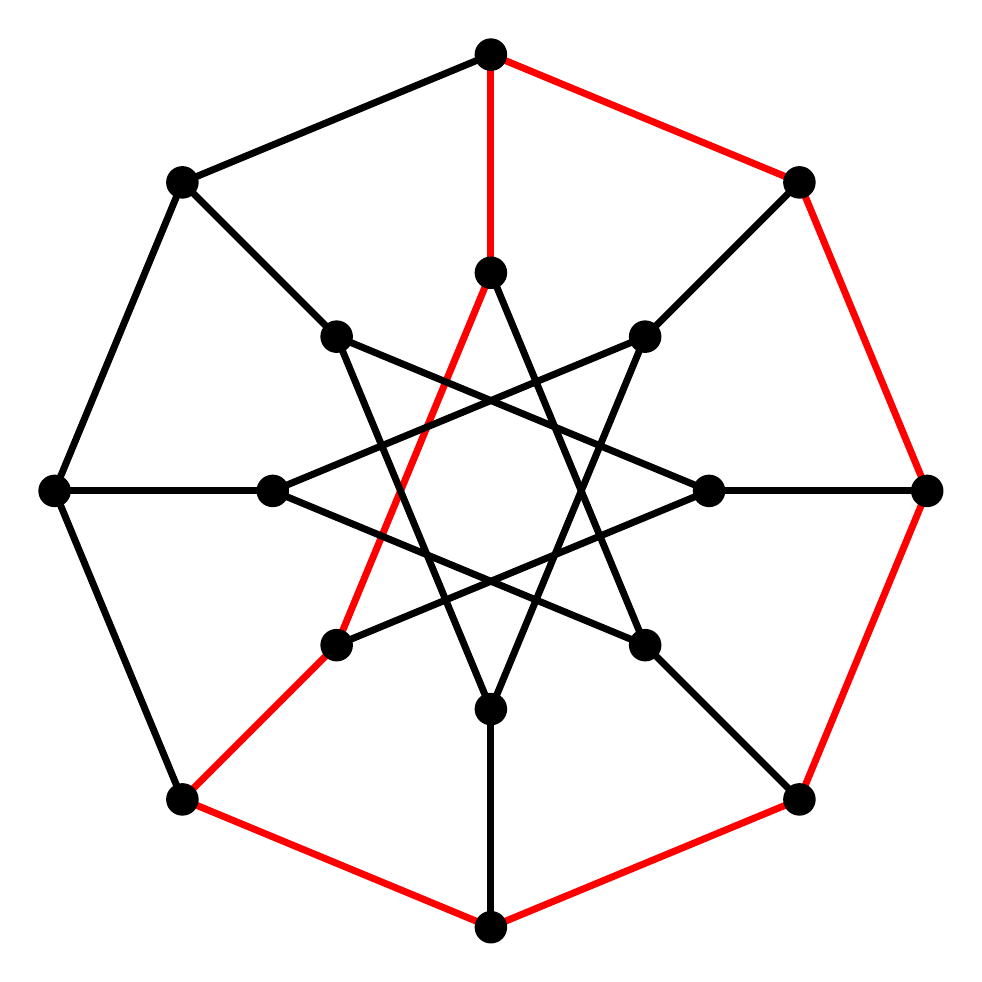}
    \caption{Cycle $C_4$.}
    \label{fig:IC4}
  \end{subfigure}
  \begin{subfigure}{\ifbig .245 \fi \ifsmall .18 \fi \textwidth}
    \centering
    \includegraphics[width=\linewidth]{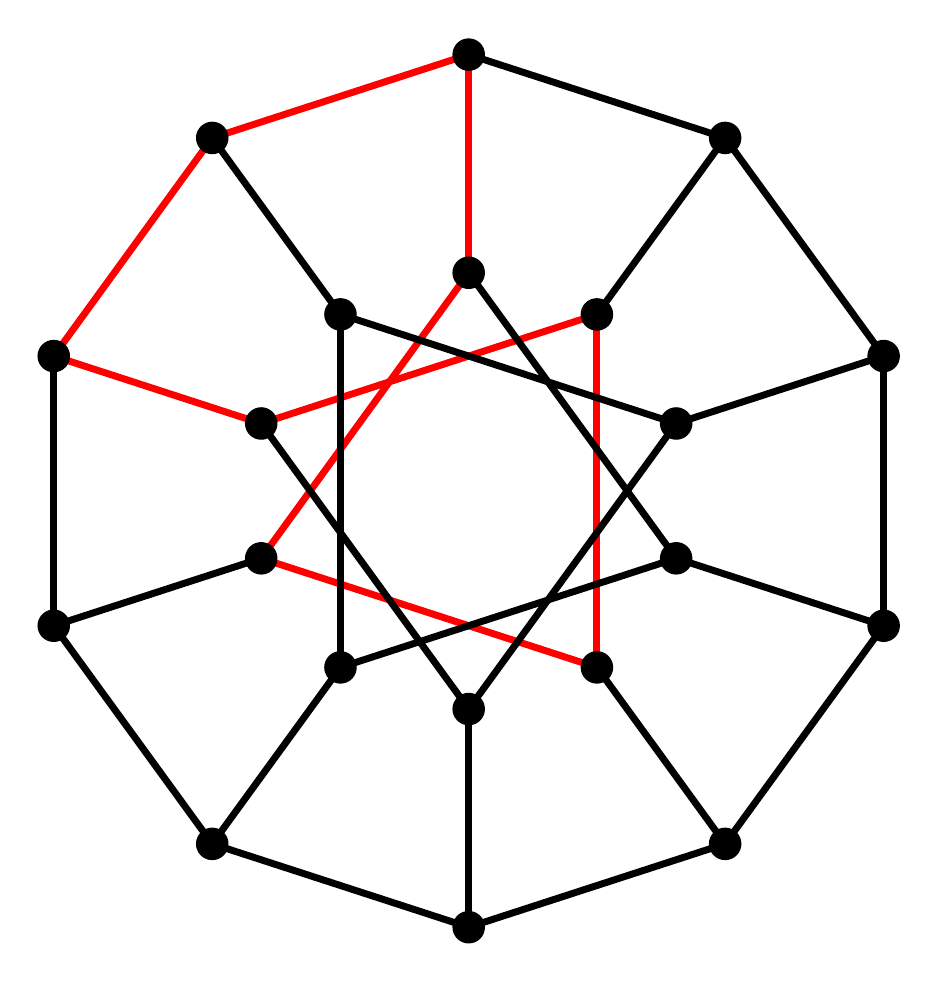}
    \caption{Cycle $C_5$.}
    \label{fig:IC5}
  \end{subfigure}
  \begin{subfigure}{\ifbig .245 \fi \ifsmall .18 \fi \textwidth}
    \centering
    \includegraphics[width=\linewidth]{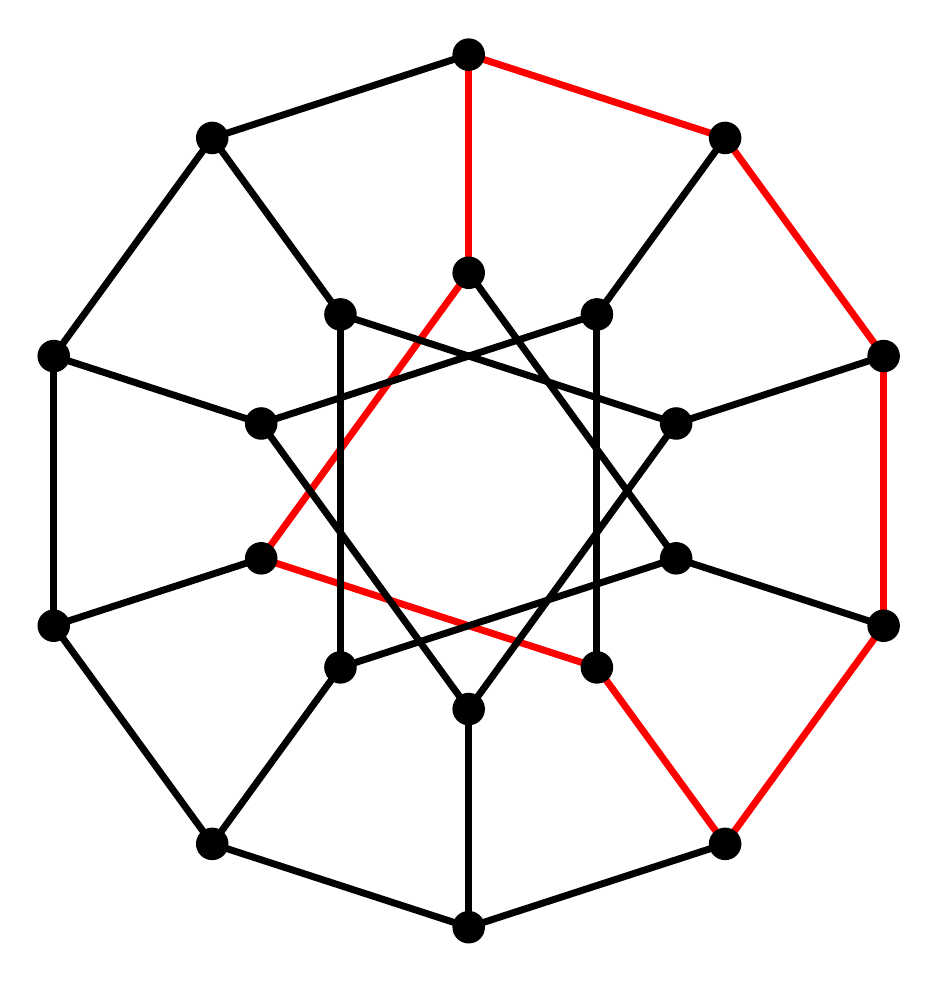}
    \caption{Cycle $C_6$.}
    \label{fig:IC6}
  \end{subfigure}
  \begin{subfigure}{\ifbig .245 \fi \ifsmall .18 \fi \textwidth}
    \centering
    \includegraphics[width=\linewidth]{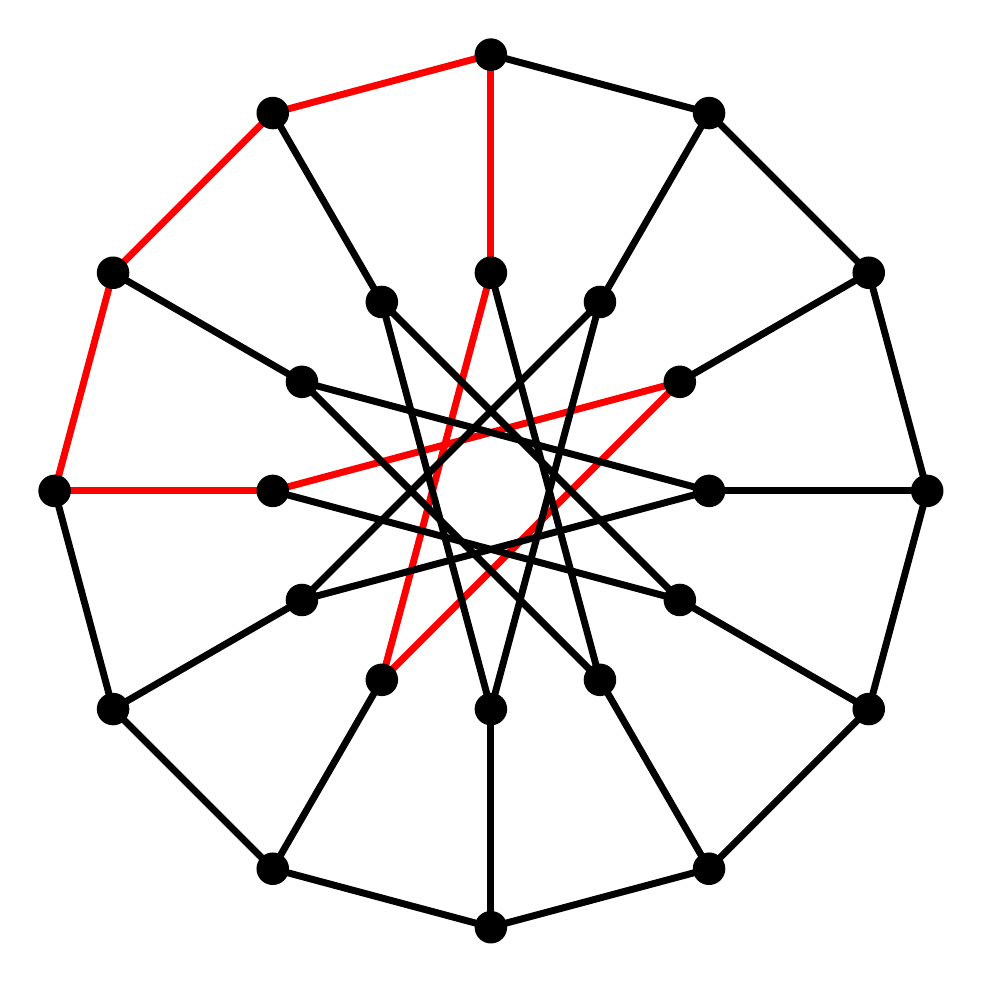}
    \caption{Cycle $C_7$.}
    \label{fig:IC7}
  \end{subfigure}
  \caption{Examples of non-equivalent $8$-cycles in $I$-graphs.}
  \label{fig:cyclesInIGraphs}
\end{figure}

The related case analysis is discussed in the subsections below, and organized as follows.
We first distinguish $8$-cycles by the number of spoke edges they admit. Indeed, it is easy to see that an arbitrary $8$-cycle can have either $4, 0$ or $2$ spoke edges. The first two cases correspond to \cref{sec:4spoke,sec:0spoke}, respectively.
For the last case we further distinguish cases by the number of outer and inner edges within a given $8$-cycle. Those cases are discussed in \cref{sec:outer1,sec:outer2,sec:outer3}.

\subsubsection{\texorpdfstring{\ifbig \boldmath \fi $8$}{8}-cycles with \texorpdfstring{\ifbig \boldmath \fi $4$}{4} spoke edges\label{sec:4spoke}}
In addition to $4$ spoke edges the $8$-cycle must have also two inner and two outer edges. When using the spoke edge there are two options for choosing an inner (outer) edge. After a thorough analysis of all possibilities it is easy to see that there can be just two such $8$-cycles, $C^*$ (see \cref{fig:IC*}), which exists whenever $j \neq k$, and $C_0$, which is of the following form:
$$C_0 =(w_0, w_{\pm k}, u_{\pm k}, u_{\pm k \pm j}, w_{\pm k \pm j}, w_{\pm 2k \pm j}, u_{\pm 2k \pm j},u_{\pm 2k \pm 2j}),$$
see \cref{fig:IC0}.
Cycle $C_0$ exists whenever $2k + 2j = n$.
One can verify easily, that $n$ applications of the rotation $\rho$ to $C^*$ and $n/2$ applications of the rotation $\rho$ to cycle $C_0$ maps the cycle back to itself. Therefore there are $n$ equivalent cycles to $C^*$ and $n/2$ equivalent cycles to $C_0$ in an $I$-graph $I(n,j,k)$ and they contribute $(2,4,2)$ and $(1,2,1)$, respectively, to the graph octagon value.

\begin{table}[t]
    \centering
    \resizebox{\ifsmall 0.70\fi \columnwidth}{!}{
    \begin{tabular}{c|c|c}
    \textbf{Label}          & \textbf{A representative of an \boldmath$8$-cycle}                                                                 & \textbf{Existence conditions} \\
    \hline
    $C^*$                   & $(w_0, w_{\pm k}, u_{\pm k}, u_{\pm k \pm j}, w_{\pm k \pm j}, w_{\pm j}, u_{\pm j},u_0)$                & $k \neq j$ and $n > 4$        \\
    \hline
    $C_0$                   & $ (w_0, w_{\pm k}, u_{\pm k}, u_{\pm k \pm j}, w_{\pm k \pm j}, w_{\pm 2k \pm j}, u_{\pm 2k \pm j}, u_{\pm 2k \pm 2j})$ & $2k + 2j = n$                 \\
    \hline
    $C_1$                   & $(u_0, u_j, u_{2j}, u_{3j}, u_{4j}, u_{5j}, u_{6j}, u_{7j})$                                              & $ 8j = n$ or  $3n$            \\
    \hline
    $C_2$                   & $(w_0, w_k, w_{2k}, w_{3k}, w_{4k}, w_{5k}, w_{6k}, w_{7k})$                                              & $ 8k = n$ or $3n$             \\
    \hline
    \multirow{2}{*}{$C_3$}  & $(w_0, w_k, w_{2k},  w_{3k}, w_{4k}, w_{5k}, u_{5k}, u_{5k+j})$                                           & $5k + j = n$ or $2n$          \\
    \cline{2-3}
                            & $(w_0, w_k, w_{2k},  w_{3k}, w_{4k}, w_{5k}, u_{5k}, u_{5k-j})$                                           & $5k - j = n$ or $2n$          \\
    \hline
    \multirow{2}{*}{$C_4$}  & $(u_0, u_j, u_{2j}, u_{3j}, u_{4j}, u_{5j}, w_{5j}, w_{5j + k})$                                          & $k + 5j = n$ or $ 2n$         \\
    \cline{2-3}
                            & $(u_0, u_j, u_{2j}, u_{3j}, u_{4j}, u_{5j}, w_{5j}, w_{5j - k})$                                          & $5j - k = 2n$ or $n$ or $0$   \\
    \hline
    \multirow{2}{*}{$C_5$}  & $(w_0, w_k, w_{2k},  w_{3k}, w_{4k}, u_{4k}, u_{4k + j}, u_{4k + 2j})$                                    & $4k + 2j = n$ or $2k + j = n$ \\
    \cline{2-3}
                            & $(w_0, w_k, w_{2k},  w_{3k}, w_{4k}, u_{4k}, u_{4k - j}, u_{4k - 2j})$                                    & $4k - 2j = n$                 \\
    \hline
    \multirow{ 2}{*}{$C_6$} & $(u_0, u_j, u_{2j},  u_{3j}, u_{4j}, w_{4j}, w_{4j + k}, w_{4j + 2k})$                                    & $2k + 4j = n$ or $k + 2j = n$ \\
    \cline{2-3}
                            & $(u_0, u_j, u_{2j},  u_{3j}, u_{4j}, w_{4j}, w_{4j - k}, w_{4j - 2k})$                                    & $4j - 2k = n$ or $0$          \\
    \hline
    \multirow{2}{*}{$C_7$}  & $(w_0, w_k, w_{2k},  w_{3k}, u_{3k}, u_{3k + j}, u_{3k + 2j}, u_{3k + 3j})$                               & $3k + 3j = n$ or $2n$         \\
    \cline{2-3}
                            & $(w_0, w_ {k}, w_{2k},  w_{ 3k}, u_{3k}, u_{3k - j}, u_{3k - 2j}, u_{3k - 3j})$                           & $3k - 3j = n$ or $0$          \\
  \end{tabular}}
  \caption{All non-equivalent $8$-cycles of $I$-graphs.}
  \label{tab:I8conditions}
\end{table}

\subsubsection{\texorpdfstring{\ifbig \boldmath \fi$8$}{8}-cycles without spoke edges\label{sec:0spoke}} In this case we have $2$ options for such a cycle, either it lies completely in the outer rim or in the inner rim (see \cref{fig:IC1C2}). By the definition of $I$-graphs, cycles in the outer rim (inner rim) are of length $n / \gcd(n,j)$ ($n / \gcd(n,k)$), therefore this cycle exists when $8j \equiv 0 \pmod n$ ($8k \equiv 0 \pmod n$).
Using this equation and the fact that $j < n/2$ ($k < n/2$) we get the following conditions: either $8j = n$ ($8k = n $) or $8j = 3n$ ($8j = 3n$).
These cycles are of the following form:
\begin{align*}
  C_1 & =(u_0, u_j, u_{2j}, u_{3j}, u_{4j}, u_{5j}, u_{6j}, u_{7j}), \\
  C_2 & =(w_0, w_k, w_{2k}, w_{3k}, w_{4k}, w_{5k}, w_{6k}, w_{7k}),
\end{align*}
see \cref{fig:IC1C2}.
Each edge appears only in one cycle of this form, therefore there are $n/8$ equivalent such cycles. These cycles contribute $(1,0,0)$, $(0,0,1)$ respectively, to the octagon value of an $I$-graph.

\subsubsection{\texorpdfstring{\ifbig \boldmath \fi$8$}{8}-cycles with \texorpdfstring{\ifbig \boldmath \fi$1$}{1} edge in the outer or inner rim \label{sec:outer1}}
In this case cycles are of form
\begin{align*}
  C_3 & =(w_0, w_k, w_{2k}, w_{3k}, w_{4k}, w_{5k}, u_{5k}, u_{5k \pm j}) \quad \text{or} \\
  C_4 & =(u_0, u_j, u_{2j}, u_{3j}, u_{4j}, u_{5j}, w_{5j}, w_{5j \pm k}),
\end{align*}
see also \cref{fig:IC3,fig:IC4}.
This happens when $5k + j \equiv 0 \pmod n$ or $5k - j \equiv 0 \pmod n$ in the first case, $k + 5j \equiv 0 \pmod n$ in the second case.
Since there is only one edge in the outer or inner rim, we can see that the number of equivalent cycles equals $n$. Therefore they contribute to the octagon value of an $I$-graph $(1,2,5)$ and $(5,2,1)$ respectively.

\subsubsection{\texorpdfstring{\ifbig \boldmath \fi$8$}{8}-cycles with \texorpdfstring{\ifbig \boldmath \fi$2$}{2} edges in the outer or inner rim \label{sec:outer2}}
In this case cycles are of form:
\begin{align*}
  C_5 & =(w_0, w_k, w_{2k},  w_{3k}, w_{4k}, u_{4k}, u_{4k \pm j}, u_{4k \pm 2j}) \quad \text{or} \\
  C_6 & =(u_0, u_j, u_{2j},  u_{3j}, u_{4j}, w_{4j}, w_{4j \pm k}, w_{4j \pm 2k}),
\end{align*}
see also \cref{fig:IC5,fig:IC6}. This happens when $4k + 2j \equiv 0 \pmod n $ or $4k - 2j \equiv 0 \pmod n $ for the first case and $2k + 4j \equiv 0 \pmod n$ or $2k - 4j \equiv 0 \pmod n$ for the second one.
To determine their contribution to the octagon value of an $I$-graph, it is necessary to see that there are $n$ such equivalent $8$-cycles.
In the case of $C_5$, we prove this by taking vertices on the outer rim $u_{4k}, u_{4k \pm j}, u_{4k \pm 2j}$. Two of those vertices ($u_{4k}, u_{4k \pm 2j}$) are incident to a spoke edge and an outer edge, where $u_{4k \pm j}$ is incident only to outer edges. So if we want $\gamma(C_5)$ to be less than $n$, there would have to exist $i \in \mathbb{N}$ such that $\rho^i(u_{4k})=u_{4k \pm 2j}$ and $\rho^i(u_{4k \pm j})=u_{4k \pm j}$, which is not possible.
We can argue similarly for $C_6$. Therefore $C_5$ and $C_6$ occur $n$ times and contribute to the octagon value of an $I$-graph $(2,2,4), (4,2,2)$ respectively.

\subsubsection{\texorpdfstring{\ifbig \boldmath \fi$8$}{8}-cycles with \texorpdfstring{\ifbig \boldmath \fi $3$}{3} edges in the outer and inner rim \label{sec:outer3}}
In this case cycles are of form:
$$C_7 =(u_0, u_j, u_{2j}, u_{3j}, w_{3j}, w_{3j \pm k}, w_{3j \pm 2k},w_{3j \pm 3k}),$$
see also \cref{fig:IC7}. This happens when $3k + 3j \equiv 0 \pmod n $ or $3k - 3j \equiv 0 \pmod n$.
Similarly as before, we can show that $C_7$ occurs $n$ times.
If we observe vertices of the outer rim $u_0, u_j, u_{2j}, u_{3j}$, we see that the first and last vertex $u_0,u_{3j}$ are incident to one outer and one spoke edge, where $u_j, u_{2j}$ are incident to two outer edges. In this case the rotation $\rho^i$ would need to map $u_0$ to $u_{3j}$ and $u_j$ to $u_{2j}$, which is impossible. Therefore $C_7$ occurs $n$ times and contributes $(3,2,3)$ to the octagon value of an $I$-graph.

All possible $8$-cycles with their existence conditions are summed up in \cref{tab:I8conditions}, whereas their contributions to graph octagon value and number of equivalent cycles are summed up in \cref{tab:I8cycles}.

\begin{table}[ht!]
  \centering
     \resizebox{\ifsmall 0.75 \fi \columnwidth}{!}{%
  \begin{tabular}{c|c|c|c|c|c|c|c|c|c}
    \textbf{Label}       & $C^*$     & $C_0$     & $C_1$     & $C_2$     & $C_3$     & $C_4$     & $C_5$     & $C_6$     & $C_7$     \\
    \hline
    \boldmath$\tau(C)$   & $(2,4,2)$ & $(1,2,1)$ & $(0,0,1)$ & $(1,0,0)$ & $(5,2,1)$ & $(1,2,5)$ & $(4,2,2)$ & $(2,2,4)$ & $(3,2,3)$ \\

    \boldmath$\gamma(C)$ & $n$       & $n / 2$   & $n / 8$   & $n / 8$   & $n$       & $n$       & $n$       & $n$       & $n$
  \end{tabular}}
  \caption{Contribution of $8$-cycles to octagon value and their occurrences in $I$-graphs.}
  \label{tab:I8cycles}
\end{table}

\subsection{Obtaining constant octagon value}
As we observed, every $8$-cycle of an $I$-graph contributes to the octagon value of each edge partition. It will turn out that if we can identify at least one edge partition of a graph, we can easily determine its parameters. Therefore, we want to find graphs with constant octagon value, these are graphs for which all edges touch  the same number of $8$-cycles. They are called $[1,\lambda,8]$-cycle regular.
To compute their parameters we use calculations from previous sections.

We consider all possible collections of $8$-cycles and determine octagon values of $I$-graphs admitting those $8$-cycles. Since $I$-graphs are defined with $3$ parameters and all $8$-cycles give constraints for these parameters, it is enough to consider collections of at most $4$ cycles, to uniquely determine all $[1,\lambda,8]$-cycle regular graphs.
Some $8$-cycles have more than one existence condition, so it can happen that the graph admits multiple $8$-cycles of the same type. This happens with cycles $C_5, C_6$ where such a graph is isomorphic to $G(8,2)$ and with $C_7$ where a graph is isomorphic to $G(6,1)$. In both cases the octagon value of an $I$-graph is not constant. With other cycles existence conditions exclude each other, therefore there is at most one occurrence of each non-equivalent $8$-cycle in an arbitrary $I$-graph.

\subsubsection{Coexistence  of \texorpdfstring{\ifbig \boldmath \fi$C_0$ }{C0} and \texorpdfstring{\ifbig \boldmath \fi$C_7$}{C7}.}
If an $I$-graph $G$ admits only two $8$-cycles, one of form $C_0$ and the other of form $C_7$, then it has a constant octagon value $(4,4,4)$.\\
In this case $G$ does not admit any other cycle, neither $C^*$, which holds whenever $k=j$. Combining this with conditions for $C_0$ and $C_7$ we can see that there is exactly one such graph, namely the Cubical graph $G(4,1)$.

\subsubsection{Coexistence  of \texorpdfstring{\ifbig \boldmath \fi$C_0$}{C0}, \texorpdfstring{\ifbig \boldmath \fi$C_1$}{C1} and \texorpdfstring{\ifbig \boldmath \fi$C_2$}{C2}.}
An $I$-graph $G$ which admits exactly these three $8$-cycles has the octagon value $(2,2,2)$.
After observing the existence conditions from \cref{tab:I8cycles} for $C_0, C_1, C_2$ and taking into consideration that $C^*$ does not exist, we conclude that such an $I$-graph does not exist.

\subsubsection{Coexistence  of \texorpdfstring{\ifbig \boldmath \fi$C_3$}{C3}, \texorpdfstring{\ifbig\boldmath \fi$C_4$}{C4} and \texorpdfstring{\ifbig \boldmath \fi$C^*$}{C*}.}
In this case the octagon value of an $I$-graph equals $(8,8,8)$. Possible $I$-graphs containing these three $8$-cycles are calculated in \cref{tab:C3/C4}, using existence conditions of $C_3, C_4$ and considering that an $I$-graph admits $C^*$ whenever $j \neq k$.

\begin{table}[h!]
  \centering
  \resizebox{\ifsmall 0.55 \fi\columnwidth}{!}{
  \begin{tabular}{c||c|c|c|c|c}
    \backslashbox{$C_3$}{$C_4$} & $k + 5j = n$ & $k + 5j = 2n$ & $5j- k = 2n$ & $5j- k = n$  & $5j- k = 0$ \\
    \hline \hline
    $5k + j = n$                & not exist    & $G(8,3))$    & $G(26,5)$ & $G(13,5)$  &$G(26,5)$ \\
    \hline
    $5k + j = 2n$               & $G(8,2)$   & not exist     & $G(13,5)$  & $G(26,5)$  & $G(13,5)$ \\
    \hline
    $5k - j = 2n$               & $G(26,5)$ & $G(13,5)$   & not exist    & $G(24,5)$ & $G(12,5)$ \\
    \hline
    $5k - j = n$                & $G(13,5)$  & $G(26,5)$   & $G(24,5)$ & not exist    & $G(24,5)$ \\
  \end{tabular}}
  \caption{$I$-graphs containing $C_3, C_4$ and $C^*$.}
  \label{tab:C3/C4}
\end{table}

\subsubsection{Coexistence  of \texorpdfstring{\ifbig \boldmath \fi$C_5$}{C5}, \texorpdfstring{\ifbig\boldmath\fi$C_6$}{C6} and \texorpdfstring{\ifbig\boldmath\fi$C^*$}{C*}}
Also in this case the octagon value of an $I$-graph equals to $(8,8,8)$. Using similar approaches as before we get results that are summed up in \cref{tab:C2/C4}.

\begin{table}[h!]
\centering
\resizebox{\ifsmall 0.48 \fi\columnwidth}{!}{
  \begin{tabular}{c||c|c|c|c}
    \backslashbox{$C_5$}{$C_6$} & $2k + 4j = n$ & $k + 2j = n$ & $4j - 2k = n$ & $4j - 2k= 0$ \\
    \hline \hline
    $4k + 2j = n$               & not exist     & not exist    & $G(10,3)$   & $G(10,2)$  \\
    \hline
    $2k + j = n$                & not exist     & not exist    & $G(10,2)$   & $G(5,2)$   \\
    \hline
    $4k - 2j = n$               & $G(10,3)$   & $G(10,2)$  & not exist     & not exist    \\
  \end{tabular}}
  \caption{$I$-graphs containing $C_5, C_6$ and $C^*$.}
  \label{tab:C2/C4}
\end{table}

\subsubsection{No \texorpdfstring{\ifbig\boldmath\fi$8$}{8}-cycles.}
In this case the octagon value is $(0,0,0)$. A triangular prism graph or $G(3,1)$ is the only such graph from the family of $I$-graphs.

\begin{figure}[ht!]
  \begin{subfigure}{\ifbig.245\fi \ifsmall .18 \fi \textwidth}
    \centering
    \includegraphics[width=\linewidth]{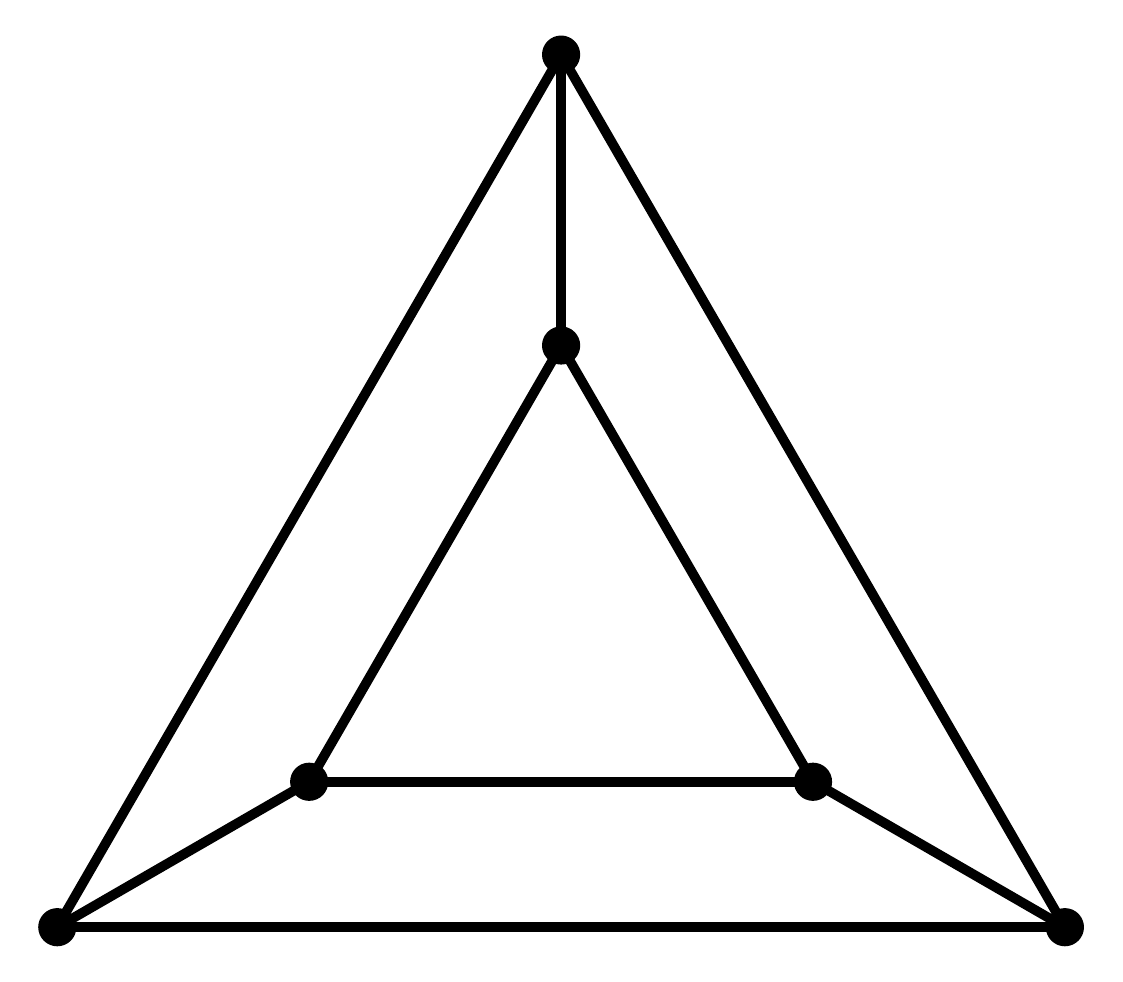}
    \caption{Triangular prism\\ graph.}
  \end{subfigure}
  \begin{subfigure}{\ifbig.245\fi \ifsmall .18 \fi \textwidth}
    \centering
    \includegraphics[width=\linewidth]{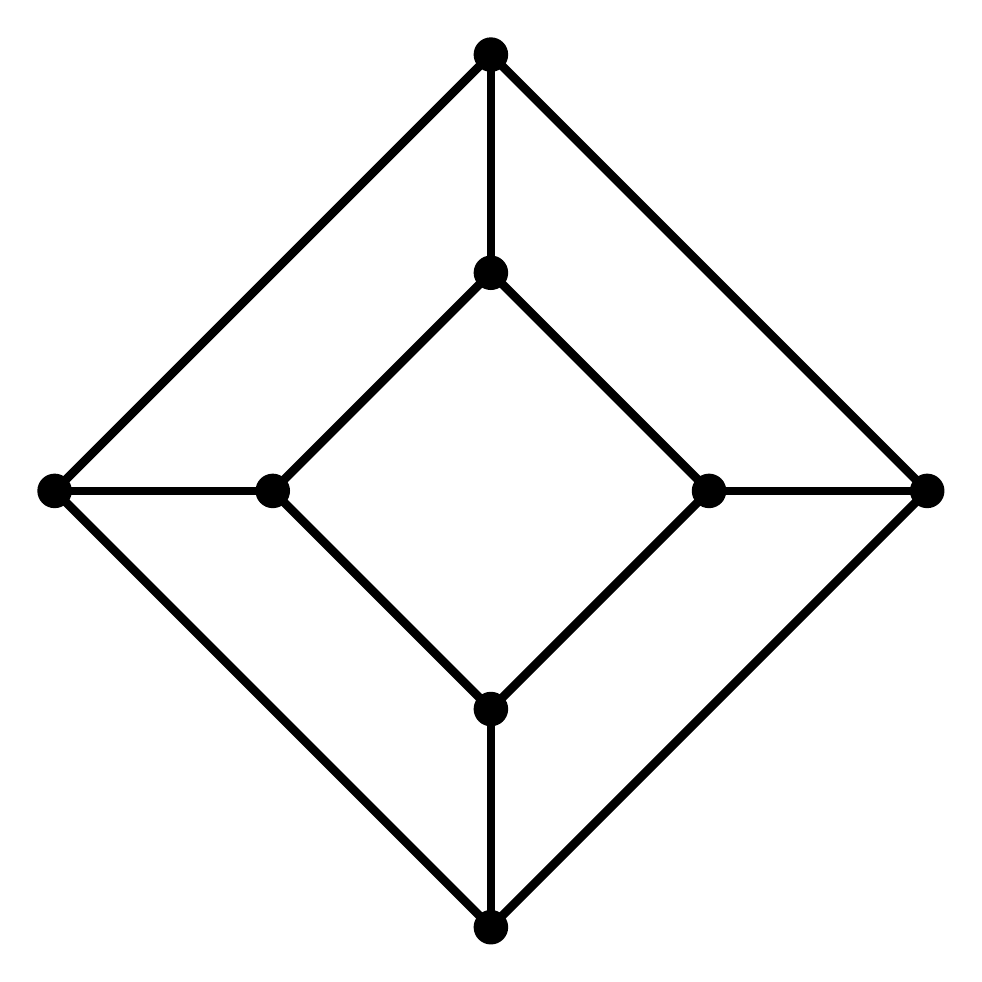}
    \caption{$3$-cube graph.}
  \end{subfigure}
  \begin{subfigure}{\ifbig.245\fi \ifsmall .18 \fi \textwidth}
    \centering
    \includegraphics[width=\linewidth]{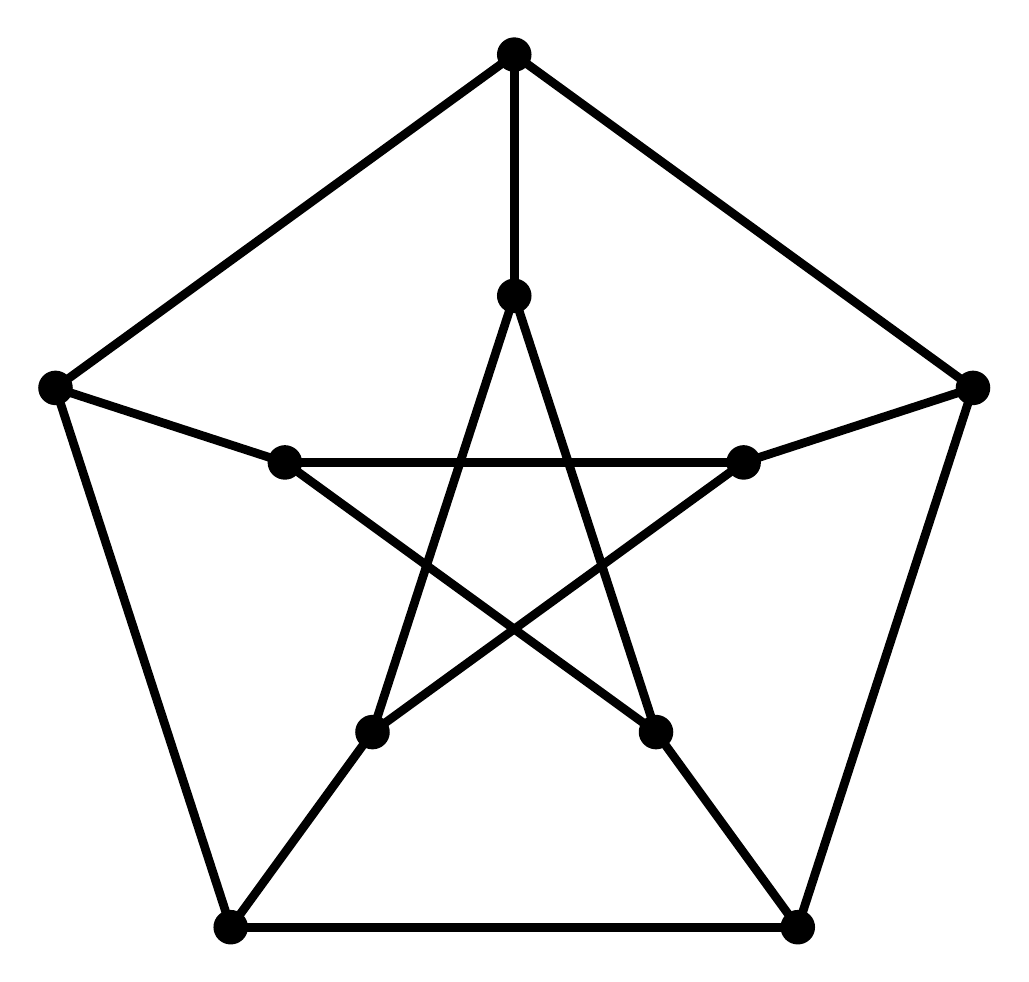}
    \caption{Petersen graph.}
    \label{GP5-2}
  \end{subfigure}
  \begin{subfigure}{\ifbig.245\fi \ifsmall .18 \fi \textwidth}
    \centering
    \includegraphics[width=\linewidth]{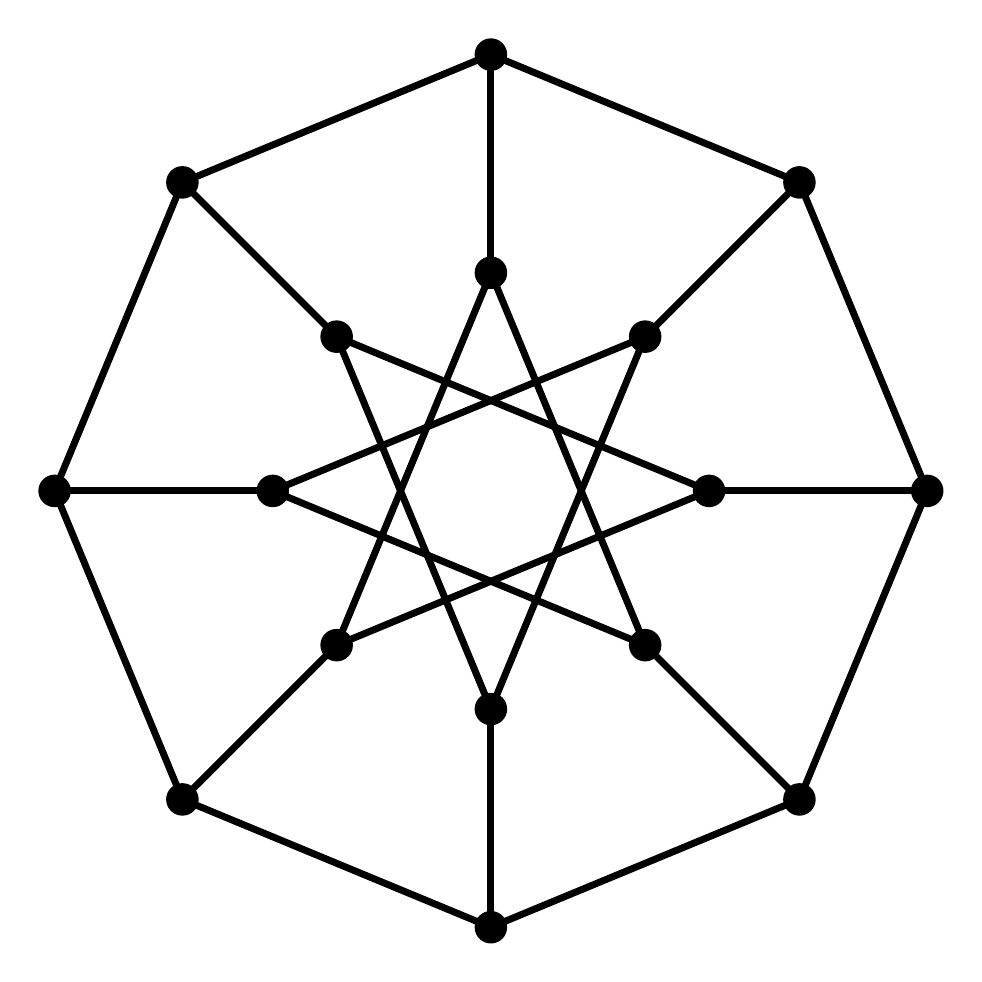}
    \caption{M\"{o}bius-Kantor\\ graph.}
    \label{GP8-3}
  \end{subfigure}
  \ifbig \\ \fi
  \begin{subfigure}{\ifbig.245\fi \ifsmall .18 \fi \textwidth}
    \centering
    \includegraphics[width=\linewidth]{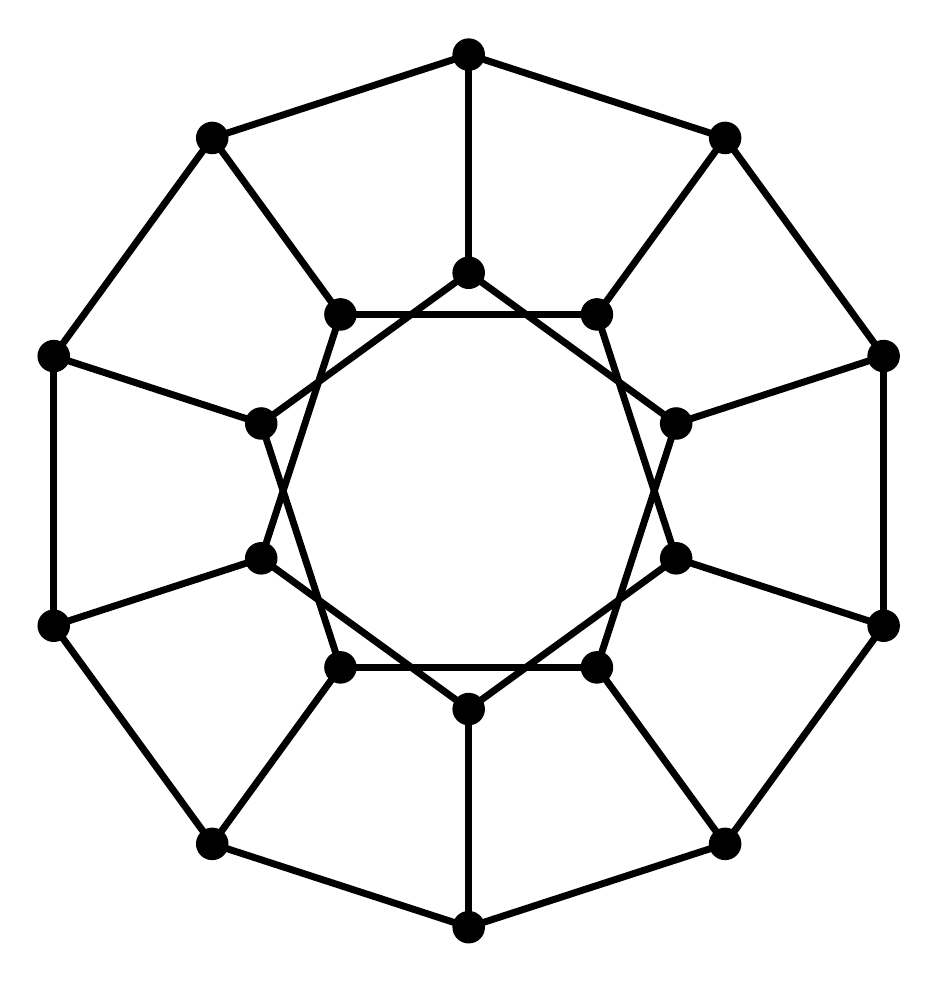}
    \caption{Dodecahedral graph.}
    \label{ForbiddenGP10,2}
  \end{subfigure}
  \begin{subfigure}{\ifbig.245\fi \ifsmall .19 \fi \textwidth}
    \centering
    \includegraphics[width=\linewidth]{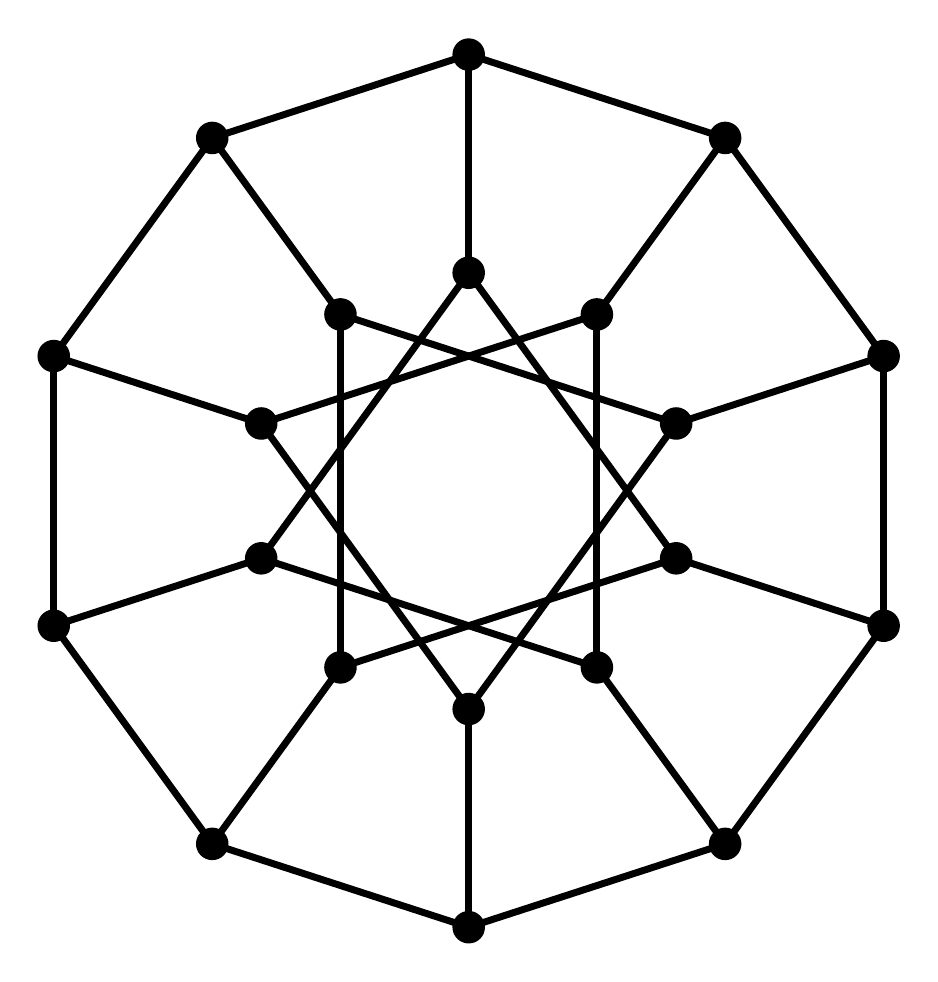}
    \caption{Desargues graph.}
    \label{ForbiddenGP10,3}
  \end{subfigure}
  \begin{subfigure}{\ifbig.245\fi \ifsmall .19 \fi \textwidth}
    \centering
    \includegraphics[width=\linewidth]{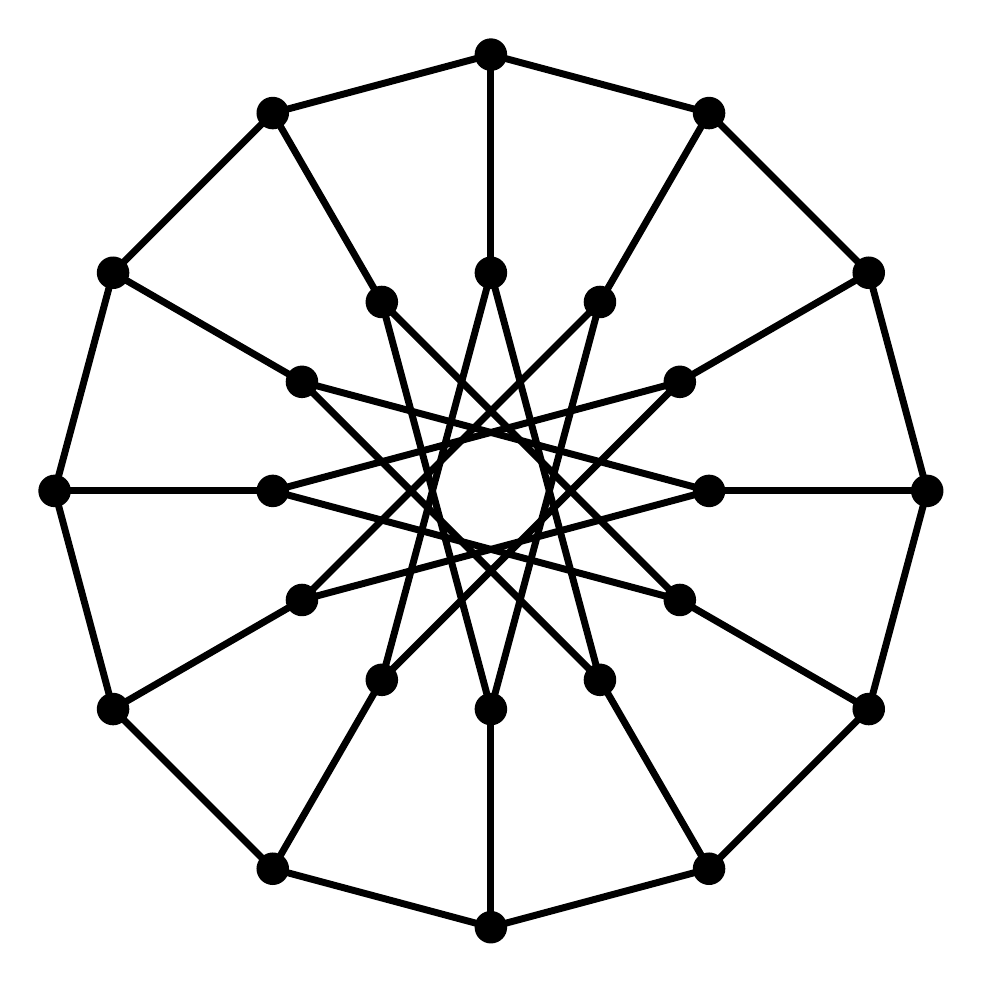}
    \caption{Nauru graph.}
    \label{ForbiddenGP12,5}
  \end{subfigure}
  \begin{subfigure}{\ifbig.245\fi \ifsmall .19 \fi \textwidth}
    \centering
    \includegraphics[width=\linewidth]{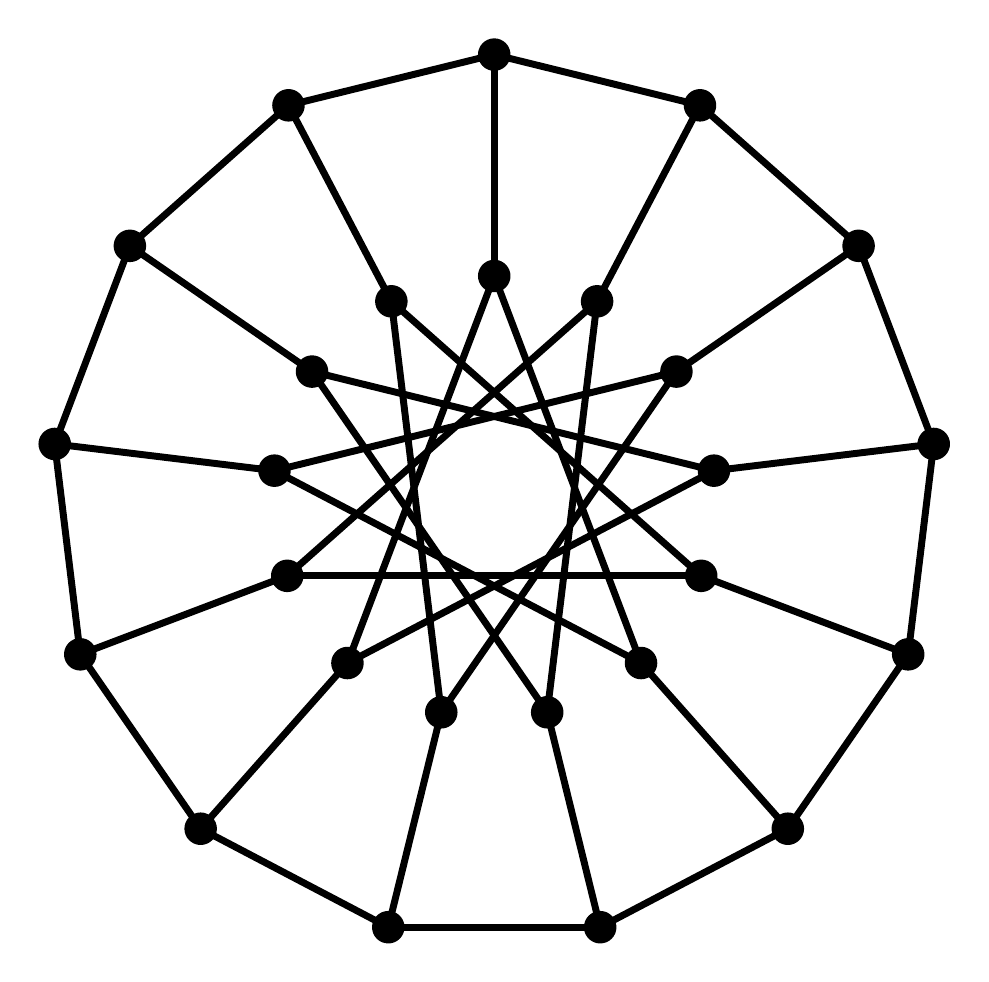}
    \caption{$G(13,5)$.}
    \label{ForbiddenGP13,5}
  \end{subfigure}
  \ifbig \\ 
  \begin{center} \fi
    \begin{subfigure}{\ifbig.245\fi \ifsmall .19 \fi \textwidth}
      \centering
      \includegraphics[width=\linewidth]{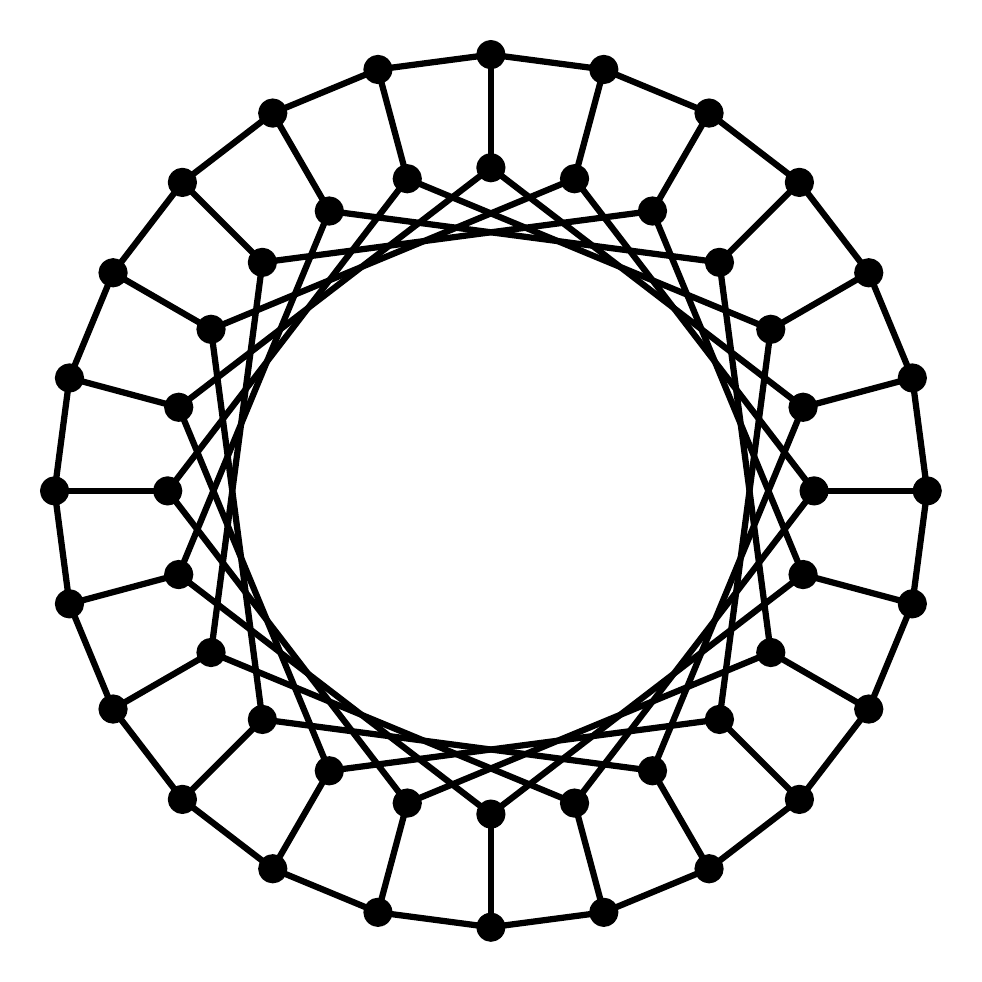}
      \caption{$F_{048} \cong G(24,5)$.}
      \label{ForbiddenGP24,5}
    \end{subfigure}
    \begin{subfigure}{\ifbig.245\fi \ifsmall .19 \fi \textwidth}
      \centering
      \includegraphics[width=\linewidth]{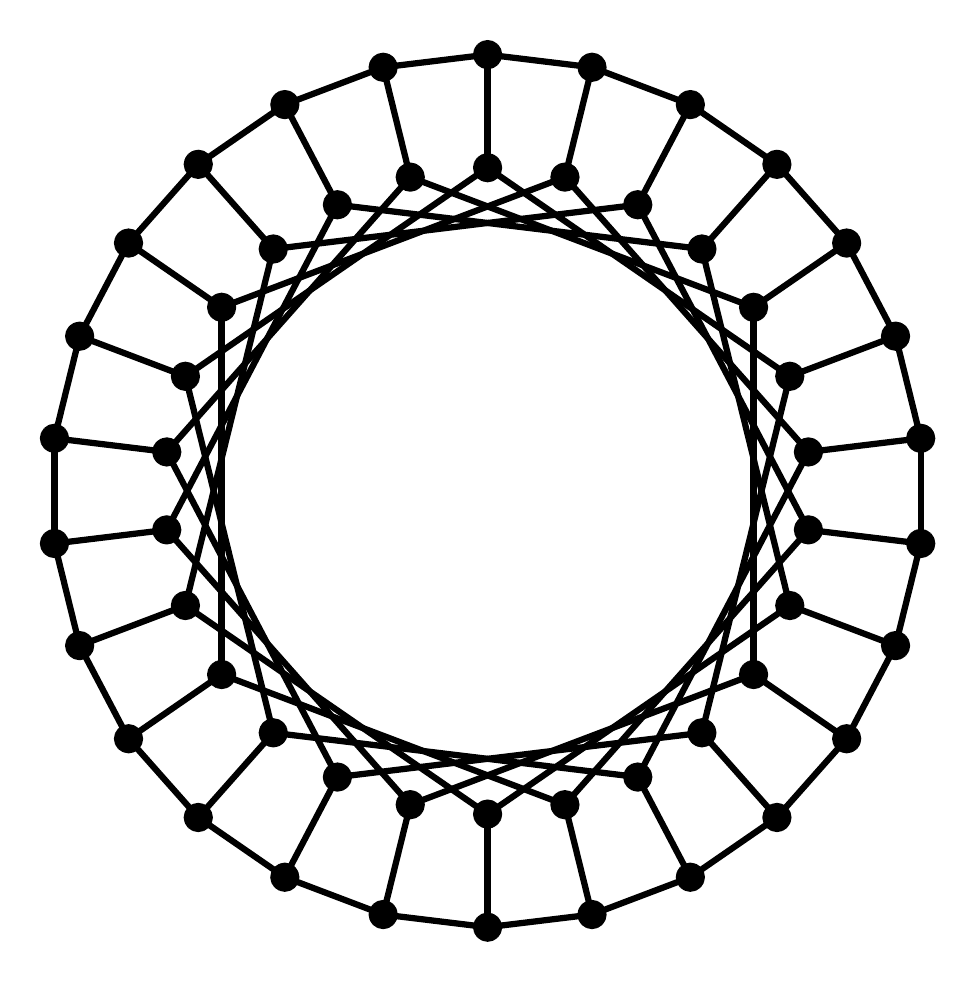}
      \caption{$G(26,5)$.}
      \label{ForbiddenGP26,5}
    \end{subfigure}
  \ifbig  \end{center} \fi
  \caption{All $[1,\lambda,8]$-cycle regular $I$-graphs.}
  \label{fig:specialIgraphs}
\end{figure}

\subsection{\texorpdfstring{\ifbig \boldmath \fi$[1,\lambda,8]$-cycle regular $I$-graphs}{[1,lambda,8]-cycle regular I-graphs}}
After considering all the cases we see that there is a finite list of $[1,\lambda,8]$-cycle regular $I$-graphs. Using isomorphism properties of $I$-graphs we can determine which graphs from \cref{tab:C3/C4,tab:C2/C4} are isomorphic. It turns out that they are all in the family of generalized Petersen graphs.
All $[1,\lambda,8]$-cycle regular graphs are listed in \cref{tab:IForbidden} together with their well-known names and are depicted in \cref{fig:specialIgraphs}.

\begin{table}[h!]
  \centering
  \resizebox{\ifsmall 0.40 \fi \ifbig 0.6 \fi \columnwidth}{!}{
  \begin{tabular}{c|c|c}
    \textbf{Graph} & \textbf{Value of} \boldmath $\lambda$ & \textbf{Common name}     \\
    \hline
    $G(3,1)$       & $0$                                   & Triangular prism graph   \\
    \hline
    $G(4,1)$       & $4$                                   & $3$-cube graph           \\
    \hline
    $G(5,2)$       & $8 $                                  & Petersen graph           \\
    \hline
    $G(8,3)$       & $8 $                                  & M\"{o}bius-Kantor graph  \\
    \hline
    $G(10,2)$      & $8 $                                  & Dodecahedral graph       \\
    \hline
    $G(10,3)$      & $8 $                                  & Desargues graph          \\
    \hline
    $G(12,5)$      & $8 $                                  & Nauru graph ($F_{024}A$) \\
    \hline
    $G(13,5)$      & $8 $                                  & $/$                      \\
    \hline
    $G(24,5)$      & $8 $                                  & $F_{048}A$               \\
    \hline
    $G(26,5)$      & $8 $                                  & $/$                      \\
  \end{tabular}}
  \caption{$[1,\lambda,8]$-cycle regular $I$-graphs. \label{tab:IForbidden}}
\end{table}

It is worth mentioning that, with the exception of $G(13,5)$ and $G(26,5)$ all other $[1,\lambda,8]$-cycle regular graphs are edge-transitive. In fact, these are the only edge transitive graphs in the family of generalized Petersen graphs (see \cite{Frucht/Graver/Watkins:1971}).

\section{Double generalized Petersen graphs \label{section-DPgraphs}}
\emph{Double generalized Petersen graphs} $\DP(n,k)$ represent another natural generalization of generalized Petersen graphs. They are defined for integers $n \geq 3, k < n / 2$, on the vertex set $\lbrace u_0, u_1, \dots ,u_{n-1}, w_0, w_1, \dots, w_{n-1}, x_0, x_1, \dots, x_{n-1},y_0, y_1,$ $\dots, y_{n-1} \rbrace$ and the edge set consisting of outer edges $u_i u_{i+1}, x_i x_{i+1}$, inner edges $w_i y_{i + k}, y_i w_{i + k}$ and spoke edges $u_i w_i, x_i y_i$, where the subscripts are taken modulo $n$.
To get a better feeling for the structure of this graph family we append two interesting observations.

\begin{thm}[Qin et al. \cite{Qin/Xia/Zhou:2018}] \label{claim:DPisom}
  $\DP(n,k)$ and $G(n',k')$ are isomorphic if and only if $n$ is an odd integer and $\gcd(n,k)=1$. In this case $n' = 2n$ and $k'$ is a unique even integer, such that ${k k' \equiv \pm 1 \pmod n}$.
\end{thm}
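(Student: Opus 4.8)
The plan is to first reduce the problem to a statement about a single distinguished cycle inside $\DP(n,k)$, then build an explicit relabelling for the ``if'' direction and argue structurally for the ``only if'' direction. Since $|V(\DP(n,k))|=4n$ and $|V(G(n',k'))|=2n'$, any isomorphism forces $n'=2n$, so from now on I compare $\DP(n,k)$ with $G(2n,k')$. The decisive object is the \emph{inner subgraph} $H$ of $\DP(n,k)$ spanned by the inner edges $w_iy_{i+k}$ and $y_iw_{i+k}$: every $w_i$ is joined exactly to $y_{i+k}$ and $y_{i-k}$ (and symmetrically for each $y_i$), so $H$ is $2$-regular, i.e.\ a disjoint union of cycles. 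Tracing the walk $w_0,y_k,w_{2k},y_{3k},\dots$ one sees that each cycle of $H$ has length equal to the smallest even $t$ with $tk\equiv 0\pmod n$. Writing $d=\gcd(n,k)$, this length is $n/d$ when $n/d$ is even and $2n/d$ when $n/d$ is odd; in particular $H$ is a single $2n$-cycle precisely when $d=1$ and $n$ is odd. This already singles out the claimed condition, and the two halves of the proof amount to showing that $\DP(n,k)$ is a generalized Petersen graph exactly when $H$ is connected.

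For the sufficiency direction I would assume $n$ is odd and $\gcd(n,k)=1$, so that $H$ is one $2n$-cycle, and parametrise it as $z_0,z_1,\dots,z_{2n-1}$ with $z_t=w_{tk}$ for $t$ even and $z_t=y_{tk}$ for $t$ odd (indices mod $n$). I then define a map $\phi$ from $G(2n,k')$ by sending the outer rim onto $H$, namely $a_t\mapsto z_t$, and sending each spoke partner to the $\DP$-vertex attached to $z_t$ by a spoke, namely $b_t\mapsto u_{tk}$ for $t$ even and $b_t\mapsto x_{tk}$ for $t$ odd. By construction the outer edges $a_ta_{t+1}$ map to inner edges of $\DP$ and the spokes $a_tb_t$ map to spokes of $\DP$. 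The only condition to check is that the inner-rim edges $b_tb_{t+k'}$ land on the outer edges $u_iu_{i+1}$ and $x_ix_{i+1}$; this requires $t$ and $t+k'$ to have the same parity, forcing $k'$ to be \emph{even}, and then $(t+k')k\equiv tk+1\pmod n$, i.e.\ $kk'\equiv 1\pmod n$ (the sign $\pm 1$ recording the two orientations of $H$). Because $n$ is odd and $\gcd(n,k)=1$, among the two residues $\pm k^{-1}\pmod n$ in $\{1,\dots,n-1\}$ exactly one is even (their sum is the odd number $n$), which yields the unique admissible $k'$; a short counting argument using $\gcd(2k,n)=1$ confirms that $\phi$ is a bijection, hence an isomorphism.

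For the necessity direction I would argue contrapositively: if $\gcd(n,k)>1$ or $n$ is even, then $H$ splits into at least two cycles, and I must rule out $\DP(n,k)\cong G(2n,k')$. The clean reduction is that any isomorphism must carry the outer $2n$-cycle of $G(2n,k')$ onto a $2n$-cycle of $\DP(n,k)$ playing the role of an outer rim, and the combinatorics of the spokes then forces that cycle to be exactly $H$; a disconnected $H$ therefore gives a contradiction. Making the phrase ``must carry the outer rim onto $H$'' precise is the main obstacle, since a generalized Petersen graph need not determine its rim canonically. I expect to settle it either by a short-cycle census in the spirit of the octagon-value computations used elsewhere in this paper (the short cycles created inside $H$ when it breaks up distinguish the connected from the disconnected case), or by comparing the cyclic symmetry of order $2n$ carried by $G(2n,k')$ with the known automorphism group of $\DP(n,k)$; either route pins down the rim and completes the equivalence.
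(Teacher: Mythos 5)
First, a point of context: the paper does not prove this statement at all --- it is imported verbatim from Qin, Xia and Zhou \cite{Qin/Xia/Zhou:2018} --- so there is no internal proof to compare against, and your attempt must be judged on its own terms. Your sufficiency half is sound. The analysis of the inner subgraph $H$ (a disjoint union of cycles of length $n/d$ or $2n/d$ with $d=\gcd(n,k)$, hence a single $2n$-cycle exactly when $d=1$ and $n$ is odd), the explicit map $a_t\mapsto z_t$, $b_t\mapsto u_{tk}$ or $x_{tk}$, the parity argument forcing $k'$ even with $kk'\equiv\pm1\pmod n$, and the observation that exactly one of the two residues $\pm k^{-1}$ in $\{1,\dots,n-1\}$ is even because they sum to the odd number $n$, are all correct, and the bijectivity check via $\gcd(2k,n)=1$ goes through.

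The necessity half, however, contains a genuine gap which you yourself flag. You assert that any isomorphism $G(2n,k')\to\DP(n,k)$ must send the outer rim onto $H$, but what an isomorphism actually yields is only a partition $V(\DP(n,k))=A\cup B$ with $|A|=|B|=2n$ such that $A$ induces a single $2n$-cycle, $B$ induces a $2$-regular graph, and the $A$--$B$ edges form a perfect matching. Nothing a priori prevents $A$ from being a $2n$-cycle that mixes $u,w,x,y$-vertices and uses edges of all three types, so ``the combinatorics of the spokes forces that cycle to be exactly $H$'' is precisely the statement that needs proof, not a reduction of it. Neither of your proposed repairs is carried out: a short-cycle census would require a full case analysis of the kind the paper performs for $8$-cycles (and it is not evident that such counts separate, say, $\DP(n,k)$ with $n$ even from every $G(2n,k')$), while the automorphism-group route needs the Kutnar--Petecki classification together with an argument that no semiregular $\mathbb{Z}_{2n}$ with the required two-orbit structure exists --- which is essentially the substance of the cited theorem. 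As it stands, the ``only if'' direction (and with it the uniqueness claim for $k'$) is unproven.
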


\isomorphDP*

\begin{proof}\label{DP-proof}
  All subscripts in the proof are taken modulo $n$.

  Let $u_i, w_i, x_i, y_i$, for $i = 0, \dots , n-1$ be vertices of graph $\DP(n,k)$ with edges
  \[u_i u_{i+1}, x_i x_{i+1}, w_i y_{i + k}, y_i w_{i + k}, u_i w_i, x_i y_i.\]
  Let $j = n/2 - k$.
  We define a permutation $\Pi$ on vertices of $\DP(n,k)$ as follows:
  \begin{align*}
    u_i & \mapsto u_i,           \\
    w_i & \mapsto w_i,           \\
    x_i & \mapsto x_{i + n/2},   \\
    y_i & \mapsto y_{i + n / 2},
  \end{align*}
  for $ i = 0, 1, \dots , n-1$.
  If we want $\Pi$ to be an isomorphism it needs to preserve all the edges set-wise.
  It is easy to see that the outer edges $u_i u_{i+1}, x_{i} x_{i+1}$ are mapped to outer edges and spoke edges $u_i v_{i}, x_{i} y_{i}$ to spoke edges.
  In the case of inner edges, $\Pi$ maps $w_i y_{i + k}$ to $w_i y_{i + k + n/2}$. Since $k + n /2 \equiv n - (k + n/2) \equiv n/2 - k \equiv j \pmod n$, we can write this edge as $w_i y_{i + j}$. Similarly we check that $y_{n / 2 + i}  w_{n / 2 + i + k}$ is mapped to $y_i w_{i + j}$.
  Therefore $\DP(n,k) \cong \DP(n,j)$.
\end{proof}

\begin{figure}[ht!]
  \centering
  \begin{subfigure}[t]{\ifbig .28 \fi \ifsmall .22 \fi \textwidth}
    \centering
    \includegraphics[width=\linewidth]{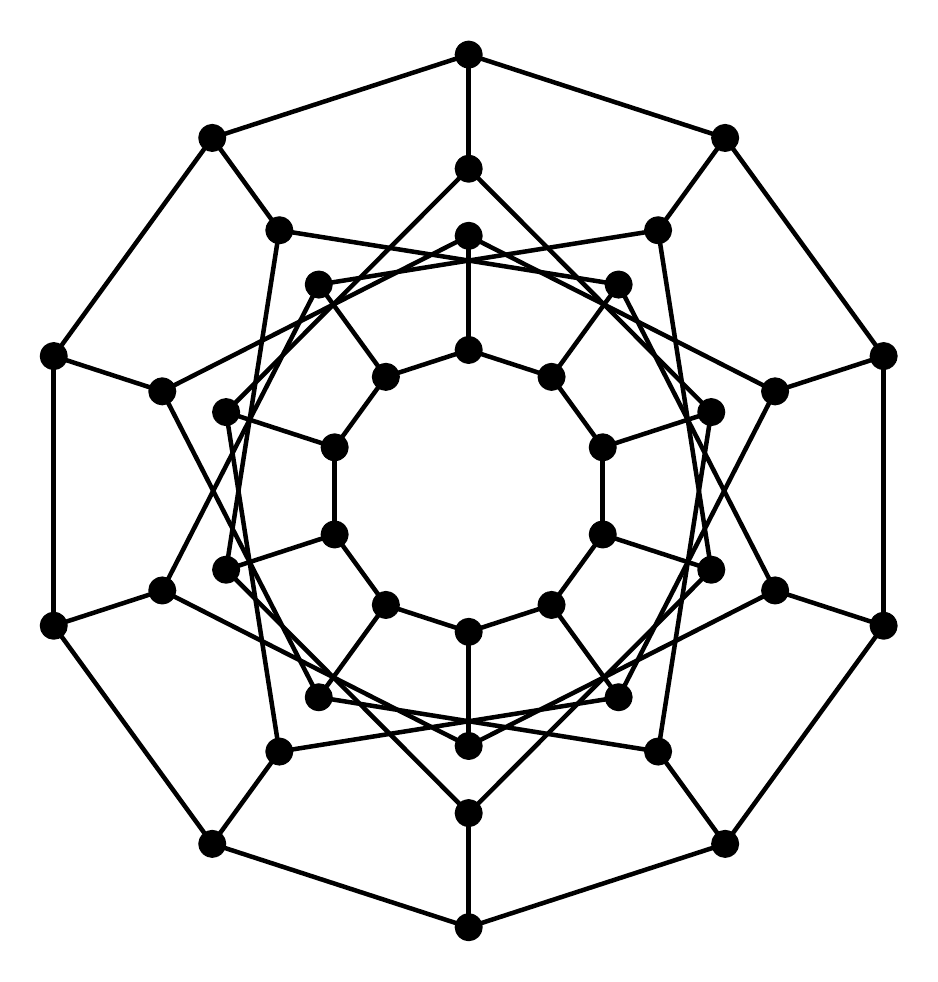}
  \end{subfigure}
  \quad \quad
  \begin{subfigure}[t]{\ifbig .28 \fi \ifsmall .22 \fi \textwidth}
    \centering
    \includegraphics[width=\linewidth]{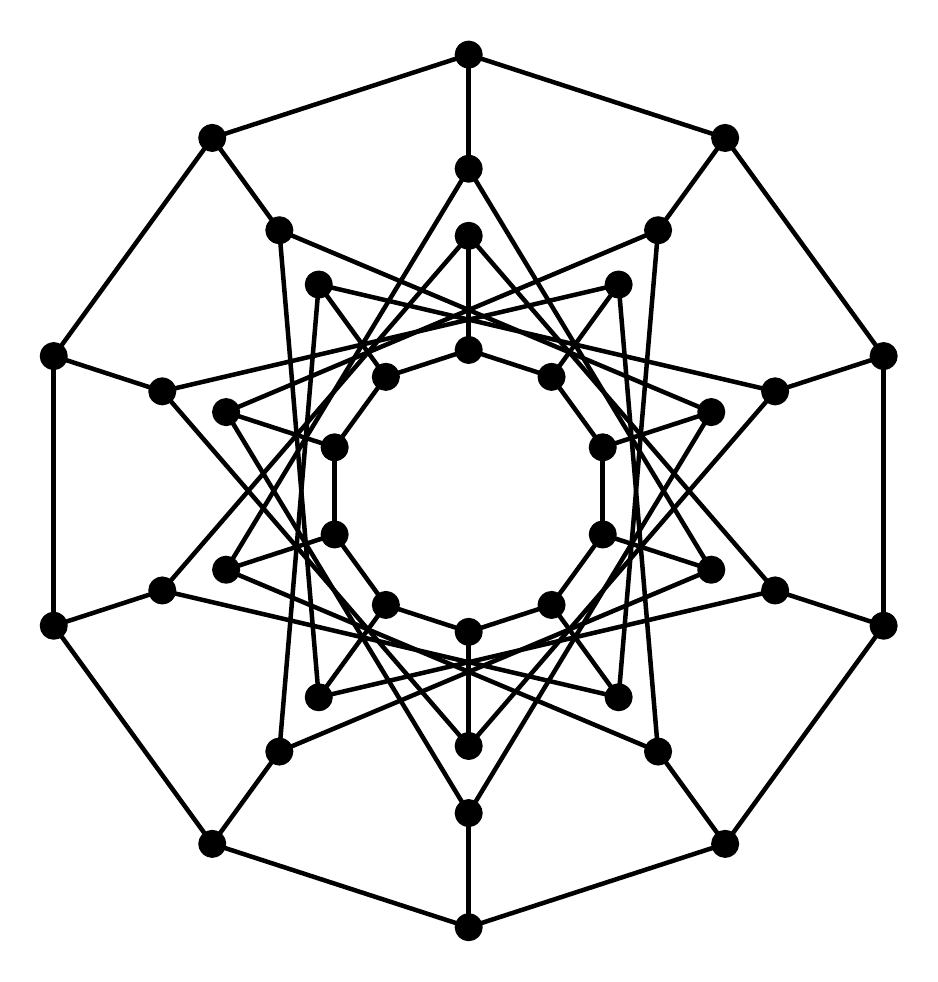}
  \end{subfigure}
  \caption{Isomorphic graphs $\DP(10,2)$ and $\DP(10,3)$.}
  \label{fig:DPisomGraphs}
\end{figure}

\subsection{Equivalent \texorpdfstring{\ifbig \boldmath \fi$8$}{8}-cycles}
Every double generalized Petersen graph admits the rotation $\rho$, defined as: $\rho(u_i) = u_{i +1}$, $\rho(w_i) = w_{i+1}$, $\rho(x_i) = x_{i +1}$, $\rho(y_i) = y_{i+1}$ and an automorphism $\alpha$ that interchanges vertices of one copy of a generalized Petersen graph with another: $\alpha(u_i) = x_i, \alpha(w_i)=y_i, \alpha(x_i) = u_i$ and $\alpha(y_i) = w_i$.
When acting with $\rho$ and $\alpha$ on any double generalized Petersen graph we get $3$ edge orbits: orbit of outer edges $E_J$, orbit of spoke edges $E_S$ and orbit of inner edges $E_I$.
Orbit of outer edges induces two cycles of length $n$, orbit of spoke edges induces a perfect matching and orbit of inner edges induces either $2 \cdot \gcd(n,k)$ cycles of length $n / \gcd(n,k)$ if $n/ \gcd(n,k)$ is even and $\gcd(n,k) > 1$, or $\gcd(n,k)$ cycles of length $2n / \gcd(n,k)$ otherwise.

Each double generalized Petersen graph admits at least one $8$-cycle.
\begin{claim}
  For positive integers $n,k$ where $n \geq 3$ and $k < n/2$, graph $\DP(n,k)$ always admits an $8$-cycle
  $$C^* =(w_0, y_{\pm k}, x_{\pm k}, x_{\pm k \pm 1}, y_{\pm k \pm 1}, w_{\pm 1}, u_{\pm 1},u_0).$$
\end{claim}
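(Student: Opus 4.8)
The plan is to verify directly that the claimed vertex sequence $C^* =(w_0, y_{\pm k}, x_{\pm k}, x_{\pm k \pm 1}, y_{\pm k \pm 1}, w_{\pm 1}, u_{\pm 1},u_0)$ forms a genuine closed walk of length $8$ in $\DP(n,k)$ by checking that each consecutive pair is joined by an edge of the graph, and then to argue that the eight vertices are pairwise distinct so that the walk is in fact a cycle. Since $\DP(n,k)$ is $3$-regular and the edge types are rigidly prescribed (outer $u_iu_{i+1}, x_ix_{i+1}$; inner $w_iy_{i+k}, y_iw_{i+k}$; spoke $u_iw_i, x_iy_i$), each step is forced, so this is essentially a bookkeeping verification rather than a search.

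First I would walk around the cycle edge by edge, classifying each edge by type. Reading the representative with all upper signs, the pair $(w_0, y_{k})$ is an inner edge $w_0 y_{0+k}$; the pair $(y_k, x_k)$ is a spoke edge $x_k y_k$; the pair $(x_k, x_{k+1})$ is an outer edge; the pair $(x_{k+1}, y_{k+1})$ is a spoke edge $x_{k+1}y_{k+1}$; the pair $(y_{k+1}, w_{1})$ is an inner edge of the form $y_i w_{i+k}$ with $i=1$; the pair $(w_1, u_1)$ is a spoke edge $u_1 w_1$; the pair $(u_1, u_0)$ is an outer edge; and finally $(u_0, w_0)$ closes the walk via the spoke edge $u_0 w_0$. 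Thus the cycle uses two inner, two outer and four spoke edges, and every adjacency is one of the defining edges, so $C^*$ is indeed a closed walk of length $8$. The same check goes through verbatim for each choice of the $\pm$ signs, since the edge relations are symmetric under $i \mapsto -i$.

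It remains to confirm that $C^*$ is a simple cycle, i.e. that its eight vertices are distinct; this is where I expect the only real subtlety, coming from small or degenerate values of $n$ and $k$. The vertices split by fibre: $\{w_0, w_{\pm 1}\}$, $\{y_{\pm k}, y_{\pm k \pm 1}\}$, $\{x_{\pm k}, x_{\pm k \pm 1}\}$ and $\{u_{\pm 1}, u_0\}$. Within a fibre, distinctness requires that the two relevant subscripts differ modulo $n$: for instance $0 \not\equiv \pm 1 \pmod n$, $\pm k \not\equiv \pm k \pm 1 \pmod n$, and likewise for the $x$-vertices. All of these reduce to $1 \not\equiv 0 \pmod n$, which holds for $n \geq 3$; since vertices in different fibres are automatically distinct by the vertex labelling, the full list of eight vertices is distinct under the hypothesis $n \geq 3$, $k < n/2$. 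Hence the closed walk has no repeated vertex and is a genuine $8$-cycle, which completes the argument.
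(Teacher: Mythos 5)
Your verification is correct and matches the paper's (implicit) argument --- the paper simply exhibits $C^*$ and leaves the edge-by-edge check and the distinctness of the eight vertices to the reader, which is exactly what you carry out. One cosmetic slip: the edge $\{y_{k+1},w_1\}$ is of the form $w_i y_{i+k}$ with $i=1$ rather than $y_i w_{i+k}$, but since both families are inner edges this does not affect the conclusion.
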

To study the structure of $8$-cycles in double generalized Petersen graphs we use the same approaches as in the case of $I$-graphs, therefore some notation and calculation is omitted.
We say that two $8$-cycles are equivalent if we can map one into the other not only using the rotation $\rho$ but also automorphism $\alpha$.
To calculate the contribution $\tau$ of an $8$-cycle to the octagon value of a double generalized Petersen graph, we again calculate $\delta_j, \delta_s, \delta_i$ by counting the number of outer, spoke and inner edges but in this case we multiply these numbers with $\gamma / 2n$.

\subsection{Characterization of non-equivalent \texorpdfstring{\ifbig \boldmath \fi$8$}{8}-cycles}
In this section we provide a list of all possible $8$-cycles in double generalized Petersen graphs and determine their contribution to the octagon value of a double generalized Petersen graph. The results are summarized in \cref{tab:DP8conditions,tab:DP8cycles} and shown in \cref{fig:DP-graphs}.

\begin{figure}[ht!]
  \centering
  \begin{subfigure}{\ifbig .25 \fi \ifsmall .22 \fi\textwidth}
    \centering
    \includegraphics[width=\linewidth]{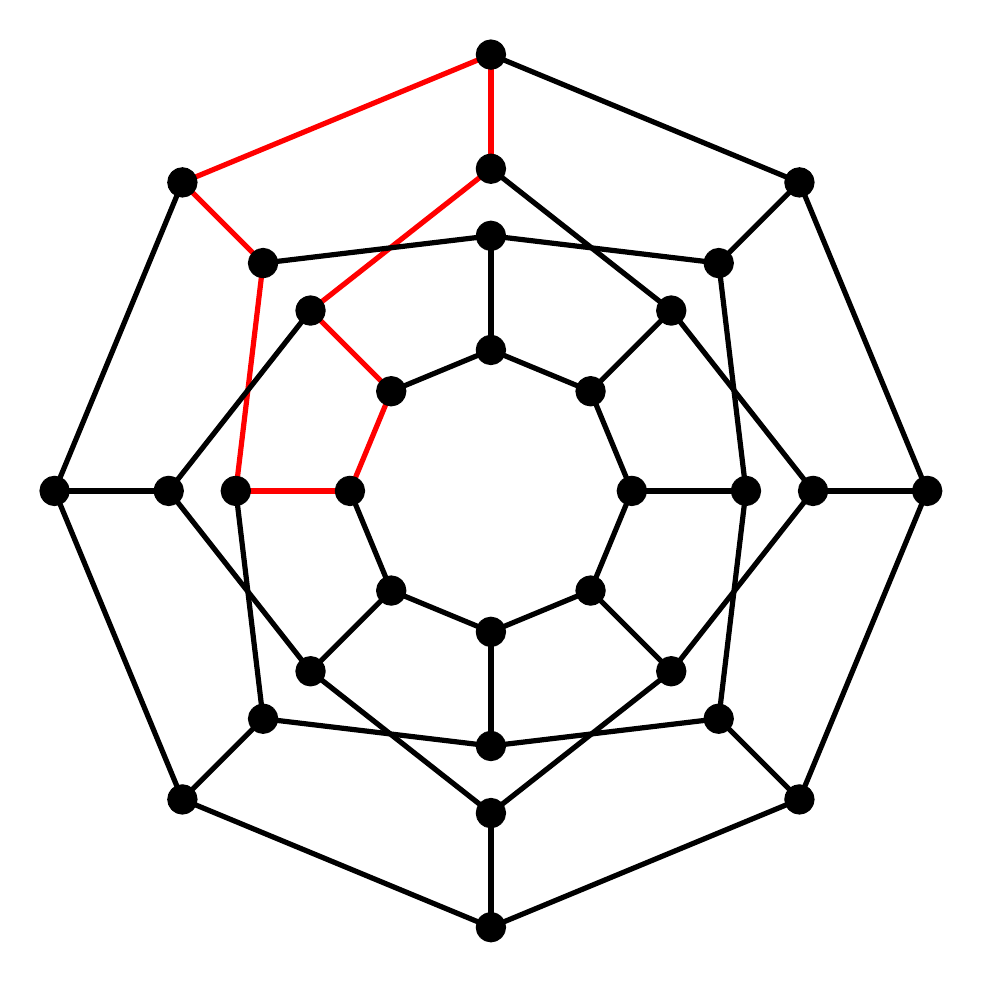}
    \caption{Cycle $C^*$.}
    \label{fig:DPC*}
  \end{subfigure}
  \quad
  \begin{subfigure}{\ifbig .25 \fi \ifsmall .22 \fi\textwidth}
    \centering
    \includegraphics[width=\linewidth]{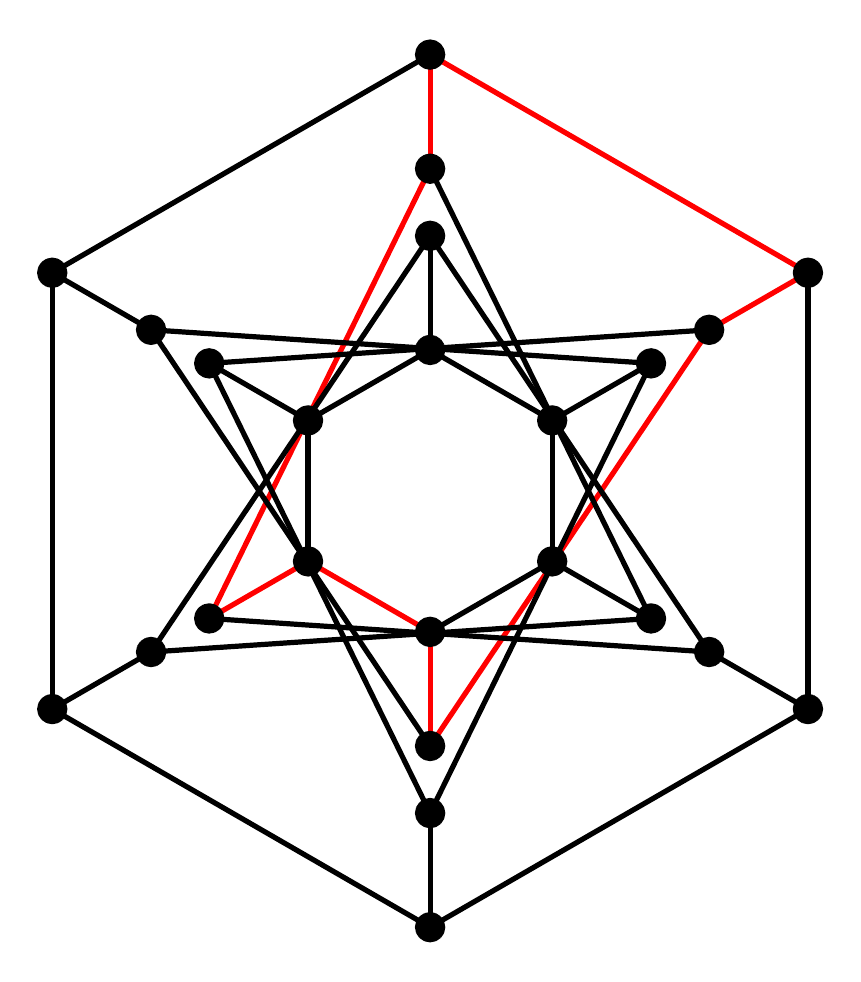}
    \caption{Cycle $C_0$.}
    \label{fig:DPC0}
  \end{subfigure}
  \quad
  \begin{subfigure}{\ifbig .25 \fi \ifsmall .22 \fi\textwidth}
    \centering
    \includegraphics[width=\linewidth]{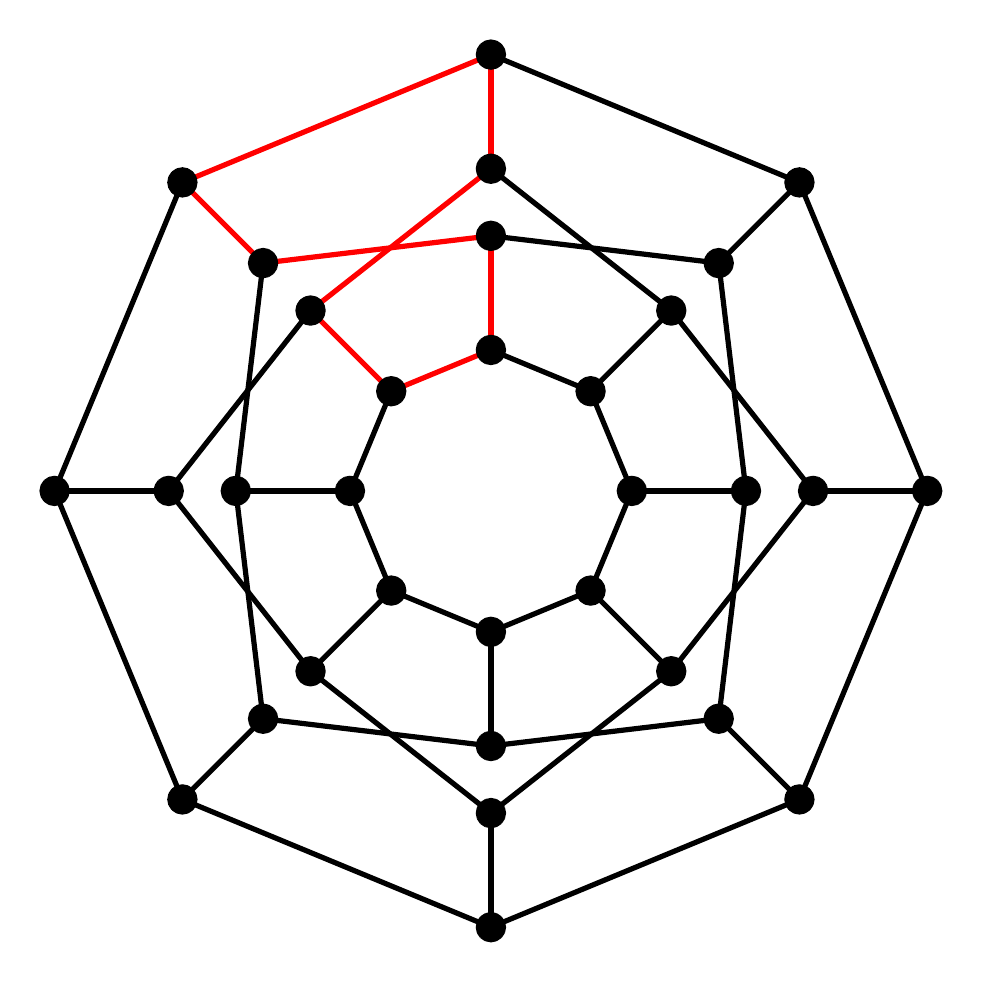}
    \caption{Cycle $C_1$.}
    \label{fig:DPC1}
  \end{subfigure}
  \\
  \begin{subfigure}{\ifbig .25 \fi \ifsmall .22 \fi\textwidth}
    \centering
    \includegraphics[width=\linewidth]{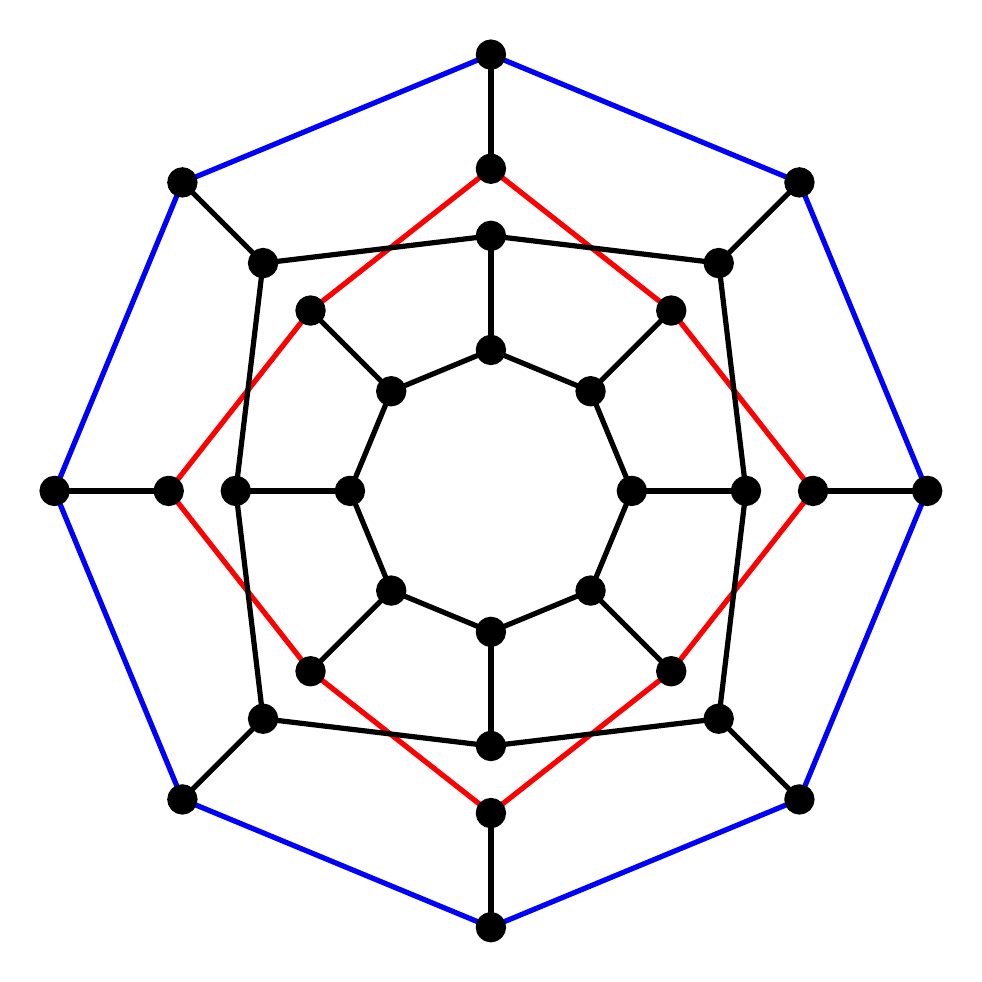}
    \caption{Cycle $C_2$ in blue and $C_3$ in red.}
    \label{fig:DPC2C3}
  \end{subfigure}
  \quad
  \begin{subfigure}{\ifbig .25 \fi \ifsmall .22 \fi\textwidth}
    \centering
    \includegraphics[width=\linewidth]{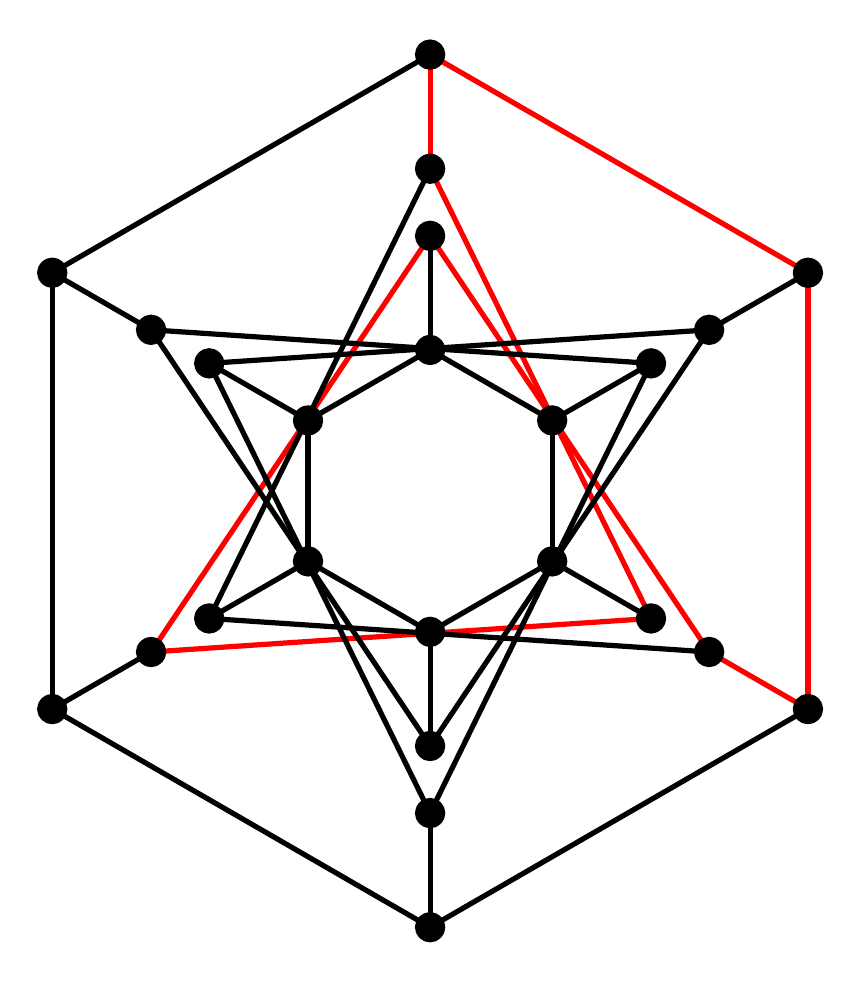}
    \caption{Cycle $C_4$.}
    \label{fig:DPC4}
  \end{subfigure}
  \quad
  \begin{subfigure}{\ifbig .25 \fi \ifsmall .22 \fi\textwidth}
    \centering
    \includegraphics[width=\linewidth]{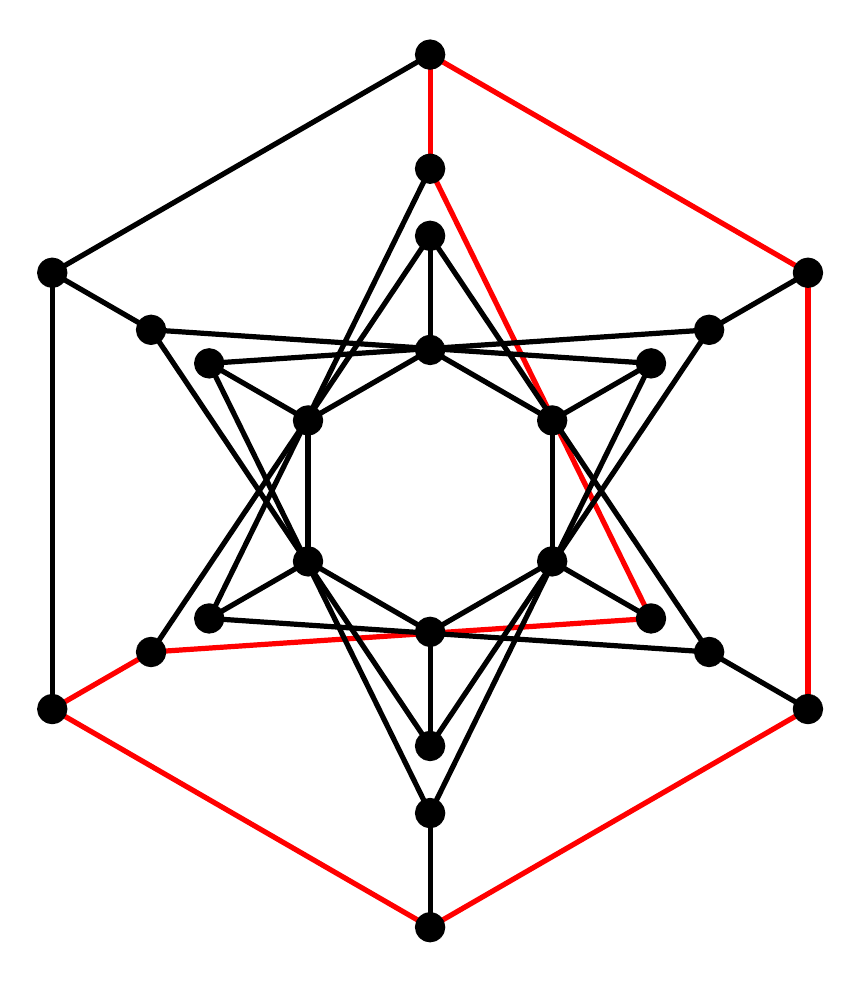}
    \caption{Cycle $C_5$.}
    \label{fig:DPC5}
  \end{subfigure}
  \caption{Examples of all non-equivalent $8$-cycles in $DP$-graphs.}
\end{figure}

The related case analysis is discussed in the subsections below, and organized as follows.
We first distinguish $8$-cycles by the number of spoke edges they admit. Indeed, it is easy to see that an arbitrary $8$-cycle can have either $4, 0$ or $2$ spoke edges. The first two cases correspond to \cref{sec:DPFourSpokes,sec:DPnoSpokes}, respectively.
For the last case we further distinguish cases by the number of outer and inner edges within a given $8$-cycle. Those cases are discussed in \cref{sec:DP2spokes}.

Even though double generalized Petersen graphs are undirected, we use the orientation of edges in the following $8$-cycle analysis, to make the process easier. We say that an edge $e=v_iv_j$ of a graph $\DP(n,k)$ is oriented \emph{positively} if $i + 1 \equiv j \pmod n$ or $i + k  \equiv j \pmod n$ and is oriented \emph{negatively} if $i - 1 \equiv j \pmod n$ or $i - k  \equiv j \pmod n$.


\subsubsection{\texorpdfstring{\ifbig \boldmath \fi$8$}{8}-cycles with four spoke edges} \label{sec:DPFourSpokes}
In this case the $8$-cycle admits also two inner and two outer edges. 
We distinguish $3$ different such $8$-cycles regarding the orientation of outer and inner edges. If one of the outer edges and one of the inner edges have positive orientation and the other outer and inner edge have negative orientation, we have a cycle of the form $C^*$ (see \cref{fig:DPC*}). If all edges have the same orientation (\cref{fig:DPC0}) the cycle is of the following form:
$$C_0 =(w_0, y_{\pm k}, x_{\pm k}, x_{\pm k \pm 1}, y_{\pm k \pm 1}, w_{\pm 2k \pm 1}, u_{\pm 2k \pm 1},u_{\pm 2k \pm 2}).$$
If outer edges are positively oriented and inner edges negatively, or vice versa, the cycle is of form
$$ C_1 = (w_0, y_{\pm k}, x_{\pm k}, x_{\pm k \mp 1}, y_{\pm k \mp 1}, w_{\pm 2k \mp 1}, u_{\pm 2k \mp 1},u_{\pm 2k \mp 2}),$$
see also \cref{fig:DPC1}.
\subsubsection{\texorpdfstring{\ifbig \boldmath \fi$8$}{8}-cycle with no spoke edges} \label{sec:DPnoSpokes}
There are two such non-equivalent $8$-cycles, one on outer edges $$C_2 = (u_0,u_1,u_2,u_3,u_4,u_5,u_6,u_7)$$ and one on inner edges
$$C_3 =(w_0, y_k, w_{2k}, y_{3k}, w_{4k}, y_{5k}, w_{6k}, y_{7k}),$$
see \cref{fig:DPC2C3}.

\subsubsection{\texorpdfstring{\ifbig \boldmath \fi$8$}{8}-cycles with two spoke edges} \label{sec:DP2spokes}
It is not hard to see that the $8$-cycle of this form admits an even number of inner edges, i.e. either $2$ or $4$. Therefore we have two possibilities for these $8$-cycles.
\begin{align*}
  C_4 & =(w_0, y_k, w_{2k},  y_{3k}, w_{4k}, u_{4k}, u_{4k \pm 1}, u_{4k \pm 2}) \quad \text{or} \\
  C_5 & =(u_0, u_1, u_{2},  u_{3}, u_{4}, w_{4}, y_{4 \pm k}, w_{4 \pm 2k}),
\end{align*}
see \cref{fig:DPC5,fig:DPC4}.

All possible $8$-cycles that can be found in double generalized Petersen graphs are listed in \cref{tab:DP8conditions,tab:DP8cycles}, together with their existence conditions, contribution to graph octagon value and number of equivalent $8$-cycles.

\begin{table}[ht!]
  \centering
  \resizebox{\ifbig \fi \ifsmall 0.65 \fi \columnwidth}{!}{
  \begin{tabular}{c|c|c|c|c|c|c|c}
    \textbf{Label}       & $C^*$       & $C_0$             & $C_1$             & $C_2$     & $C_3$             & $C_4$       & $C_5$       \\
    \hline
    \boldmath$\tau(C)$   & $(2,4,2)$   & $(1,2,1)$         & $(1,2,1)$         & $(1,0,0)$ & $(0,0,1)$         & $(2,2,4)$   & $(4,2,2)$   \\

    \boldmath$\gamma(C)$ & $n \cdot 2$ & $(n / 2) \cdot 2$ & $(n / 2) \cdot 2$ & $2$       & $(n / 8) \cdot 2$ & $n \cdot 2$ & $n \cdot 2$
  \end{tabular}}
  \caption{Contribution of $8$-cycles to octagon value and their occurrences in $\DP$-graphs.}
  \label{tab:DP8cycles}
\end{table}

\begin{table}[ht!]
  \centering
  \resizebox{\ifbig \fi \ifsmall 0.7 \fi \columnwidth}{!}{
  \begin{tabular}{c|c|c}
    \textbf{Label}         & \textbf{A representative of an \boldmath$8$-cycle}                                                                & \textbf{Existence conditions} \\
    \hline
    $C^*$                  & $(w_0, y_{\pm k}, x_{\pm k}, x_{\pm k \pm 1}, y_{\pm k \pm 1}, w_{\pm 1}, u_{\pm 1},u_0)$               & $n \geq 3$                    \\
    \hline
    $C_0$                  & $(w_0, y_{\pm k}, x_{\pm k}, x_{\pm k \pm 1}, y_{\pm k \pm 1}, w_{\pm 2k \pm 1}, u_{\pm 2k \pm 1},u_{\pm 2k \pm 2})$ & $2k + 2 = n$                  \\
    \hline
    $C_1$                  & $(w_0, y_{\pm k}, x_{\pm k}, x_{\pm k \mp 1}, y_{\pm k \mp 1}, w_{\pm 2k \mp 1}, u_{\pm 2k \mp 1},u_{\pm 2k \mp 2})$ & $k=1$                         \\
    \hline
    $ C_2$                 & $(u_0,u_1,u_2,u_3,u_4,u_5,u_6,u_7)$                                                                      & $n = 8$                       \\
    \hline
    $C_3$                  & $(w_0, y_k, w_{2k}, y_{3k}, w_{4k}, y_{5k}, w_{6k}, y_{7k})$                                             & $ 8k = n$ or  $3n$            \\
    \hline
    \multirow{2}{*}{$C_4$} & $(w_0, y_k, w_{2k},  y_{3k}, w_{4k}, u_{4k}, u_{4k + 1}, u_{4k + 2})$                                    & $4k + 2 = n$ or $2k + 1 = n$  \\
    \cline{2-3}            & $(w_0, y_k, w_{2k},  y_{3k}, w_{4k}, u_{4k}, u_{4k - 1}, u_{4k - 2})$                                    & $4k - 2 = n$                  \\
    \hline
    \multirow{2}{*}{$C_5$} & $(u_0, u_1, u_{2},  u_{3}, u_{4}, w_{4}, y_{4 + k}, w_{4 + 2k})$                                     & $2k + 4 = n$  \\
    \cline{2-3}            & $(u_0, u_1, u_{2},  u_{3}, u_{4}, w_{4}, y_{4 - k}, w_{4 - 2k})$                                   & $2k - 4= 0$           \\
  \end{tabular}}
  \caption{Characterization of non-equivalent $8$-cycles of $\DP$-graphs.}
  \label{tab:DP8conditions}
\end{table}

\subsection{Constant octagon value}
Similarly as in the case of $I$-graphs, we are interested in double generalized Petersen graphs with constant octagon value.
Here we also have some $8$-cycles with more than one existence condition, so it can happen that a double generalized Petersen graph admits two $8$-cycles of the same type. This happens only in the case of cycle $C_5$ and with the graph $\DP(8,2)$, which has a non-constant octagon value, since it admits also cycles $C^*$ and $C_2$. Therefore we can assume that an arbitrary double generalized Petersen graph admits at most one $8$-cycle of each form.

Note that $C^*$ always exists. Therefore, the only possibility to have a double generalized Petersen graph with a constant octagon value is in the case when a graph admits only $C^*, C_4$ and $C_5$.
The whole calculation is summarized in \cref{tab:DP}.
Analyzing obtained graphs, we determine that $\DP(10,3)$ and $\DP(10,2)$ are isomorphic (see \cref{thm:isomorphDP}) and that the octagon value of $\DP(6,1)$ and $\DP(6,2)$ is not constant 
(each of them admits one more cycle, $C_1$ or $C_5$).
Hence, there are just two $[1,\lambda,8]$-cycle regular double generalized Petersen graphs, with the value of $\lambda$ being $8$. They are depicted in \cref{fig:DP-graphs}.
\begin{table}[hb]
  \centering
  \resizebox{\ifbig 0.6 \fi \ifsmall 0.35 \fi \columnwidth}{!}{
  \begin{tabular}{c||c|c}
    \backslashbox{$C_4$}{$C_5$} & $2k + 4 = n$ & $2k - 4 = 0$ \\
    \hline \hline
    $4k + 2 = n$                                  & $\DP(6,1)$   & $\DP(10,2) $ \\
    \hline
    $2k + 1 = n$                                  & not exist    & $\DP(5,2) $  \\
    \hline
    $4k - 2 = n$                                  & $\DP(10,3)$  & $\DP(6,2)$   \\
  \end{tabular}}
  \caption{$\DP$ graphs containing $C_4, C_5$ and $C^*$.}
  \label{tab:DP}
\end{table}

It is worth mentioning that these $[1,\lambda,8]$-cycle regular graphs are edge transitive, in fact they are the only edge transitive double generalized Petersen graphs (see Kutnar and Petecki \cite{Kutnar/Petecki:2016}).
Notice, \cref{claim:DPisom} implies that the graph $\DP(5,2)$ is isomorphic to the Dodecahedral graph, i.e. $G(10,2)$. The other $[1,\lambda,8]$-cycle regular double generalized Petersen graph is not isomorphic to a generalized Petersen graph.

\begin{figure}[htb]
  \centering
  \begin{subfigure}[t]{\ifbig .29 \fi \ifsmall .22 \fi\textwidth}
    \centering
    \includegraphics[width=\linewidth]{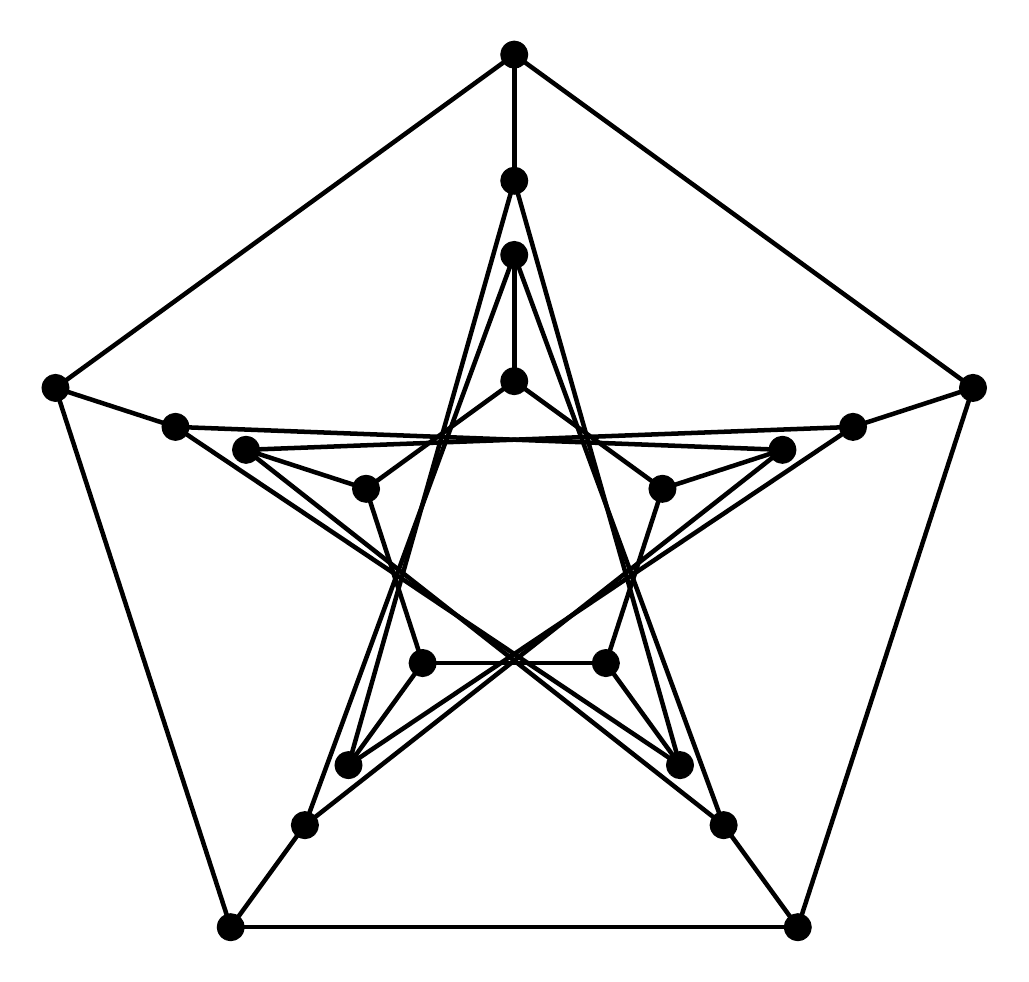}
    \caption{Dodecahedral graph.}
  \end{subfigure}
  \quad \quad
  \begin{subfigure}[t]{\ifbig .28 \fi \ifsmall .22 \fi\textwidth}
    \centering
    \includegraphics[width=\linewidth]{fig/DP10-2}
    \caption{$\DP(10,2)$.}
  \end{subfigure}
  \caption{All $[1,\lambda,8]$-cycle regular double generalized Petersen graphs.}
  \label{fig:DP-graphs}

\end{figure}

\section{Folded cubes \label{section-FodledCubes}}
An $n$-dimensional hypercube $Q_n$ is a graph on $2^{n}$ vertices, where vertices are represented with a binary string  $(x_1 x_2 \dots x_{n})$, while two vertices are adjacent whenever they differ in exactly one bit. 
For a binary value $x$ we use $\overline{x}$ to denote $1 - x$.
The $n$-dimensional folded cube $\FQ_n$ is constructed from an $n$-dimensional cube by identifying pairs of antipodal vertices, these are vertices which are exactly $n$ apart.
It can be formed also by adding \emph{complementary edges}, i.e. edges between $(x_1 x_2 \dots x_{n-1})$ and $(\overline{x}_1 \overline{x}_2 \dots \overline{x}_{n-1})$, to  the hypercube $Q_{n-1}$.
Edges of a folded cube are partitioned into two sets: set $D$ of complementary edges, also called \emph{diagonal edges} and the set $H$ of edges of a hypercube, called \emph{hypercube edges}.

Every hypercube edge of $\FQ_n$ gets a label $i$ from $ \{ 1, 2, \dots, n-1 \}$, where its value represents the bit at which adjacent vertices differ. Every diagonal edge is labeled with $d$.
With $d^*$ we denote the diagonal edge between vertices $(0, 0, \dots, 0)$ and $(1, 1, \dots, 1)$.
Cycles in this section are labeled using a sequence of edges, instead of vertices, as it was done in previous sections.

One should note that there are multiple edges with the same label, but they can never be incident, since each vertex is incident to exactly one edge of each label.
For example, see \cref{fig:4cycleFQ3}.
\begin{figure}[ht!]
    \centering
    \includegraphics[width= \ifbig 0.4 \fi \ifsmall 0.28 \fi \linewidth]{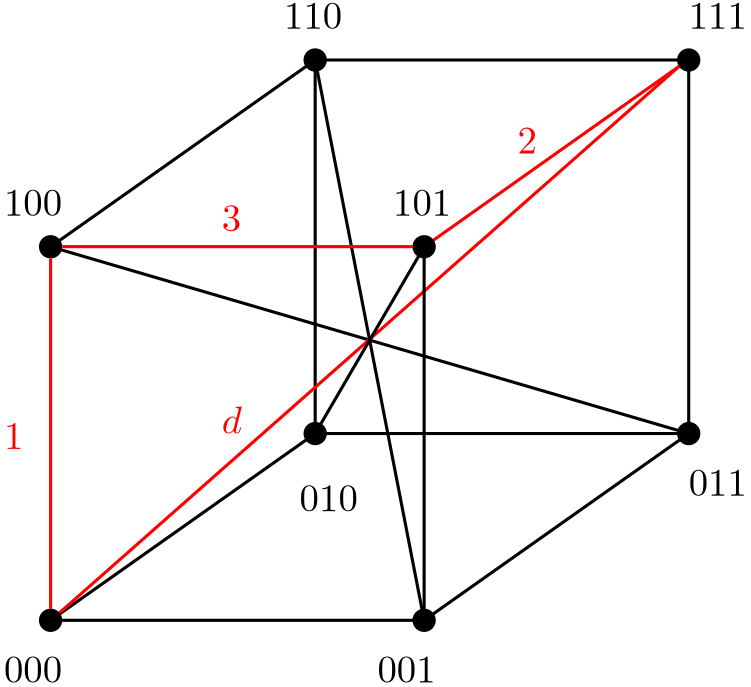}
    \caption{Example of a $4$-cycle $(1,3,2,d)$ in $\FQ_4$. \label{fig:4cycleFQ3}}
\end{figure}

\subsection{Cyclic structure of folded cubes}
Since folded cubes are distance-transitive, see for example \cite{Bon:2007}, they are arc-transitive, so they are $[1, \lambda, m]$-cycle regular for all $m$.
In this section we determine the value $\lambda$ in $[1, \lambda, 4]$ and $[1, \lambda, 6]$-cycle regularity of folded cubes.
It turns out that folded cubes are also $[2, \lambda, 6]$-cycle regular.
Also for this case we determine the value of $\lambda$.
In our study we rely on the following results.
\begin{remark}
\label{rem:FQ}
Let $C$ be an arbitrary cycle in a folded cube $\FQ_n$. Then all edge labels $\{1,2, \dots, n-1, d\}$ in $C$ appear in $C$ either all even or odd times. 
\end{remark}

\begin{thm}[Brouwer et.~al \cite{Brouwer/Cohen/Neumaier:1989}, Xu, Ma \cite{Ming/Meijie:2006}]
\label{thm:FQ-bipartite}
$\FQ_n$ is a bipartite graph if and only if $n$ is even.
\end{thm}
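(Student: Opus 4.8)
The plan is to prove both directions through the parity structure captured by \cref{rem:FQ}. Recall that there are exactly $n$ edge labels in $\FQ_n$, namely the hypercube labels $1, 2, \dots, n-1$ together with the diagonal label $d$. By \cref{rem:FQ}, in any cycle $C$ each of these labels occurs with the same parity, so $C$ falls into exactly one of two types: either every label appears an even number of times, or every label appears an odd number of times (in which case each of the $n$ labels appears at least once).

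First I would analyze the length of a cycle in each type. If every label appears an even number of times, then the length of $C$, being a sum of even numbers, is even. If every label appears an odd number of times, then the length of $C$ is a sum of exactly $n$ odd numbers, hence has the same parity as $n$. Consequently, when $n$ is even every cycle of $\FQ_n$ has even length, and a graph whose cycles are all of even length is bipartite; this settles the ``if'' direction.

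For the ``only if'' direction I would argue by contraposition, exhibiting an odd cycle when $n$ is odd. Starting at the vertex $(0, 0, \dots, 0)$, traverse the diagonal edge $d^*$ to reach $(1, 1, \dots, 1)$, and then successively apply the hypercube edges labeled $1, 2, \dots, n-1$, flipping one coordinate at a time back to $0$. After the last flip we return to $(0, 0, \dots, 0)$, and the intermediate vertices are pairwise distinct, so this closed walk is a genuine cycle. It uses one diagonal edge and $n-1$ hypercube edges, for a total length of $n$, which is odd; hence $\FQ_n$ contains an odd cycle and is not bipartite.

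The argument is short once \cref{rem:FQ} is in hand, and the only genuinely constructive step is producing the odd cycle of length $n$ in the case $n$ odd. The main point to get right is the bookkeeping that there are precisely $n$ labels, so that a cycle in which all labels appear an odd number of times has length congruent to $n$ modulo $2$; this is exactly what couples the parity of $n$ to the parity of all cycle lengths. Alternatively, one may prove the ``if'' direction directly by checking that the Hamming-weight-parity bipartition of $Q_{n-1}$ remains proper after adding diagonal edges precisely when $n$ is even, since a diagonal edge joins $x$ to its complement, whose weight parities differ if and only if $n-1$ is odd.
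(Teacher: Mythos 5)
Your argument is correct, but note that the paper does not prove this statement at all: it is imported as a known result from Brouwer et al.\ and Xu--Ma, so there is no internal proof to compare against. What you have written is a clean, self-contained derivation from \cref{rem:FQ} (which the paper also states without proof, though it follows immediately from the observation that traversing a label-$i$ edge flips coordinate $i$ while a diagonal edge flips all $n-1$ coordinates, so each $k_i$ must have the same parity as $k_d$ for the walk to close). Your bookkeeping is right: with $n$ labels total, an ``all odd'' cycle has length congruent to $n$ modulo $2$ and an ``all even'' cycle has even length, which gives bipartiteness for even $n$; and for odd $n$ the explicit cycle $d^*, 1, 2, \dots, n-1$ through $(0,\dots,0)$ and $(1,\dots,1)$ visits $n$ pairwise distinct vertices and has odd length $n$. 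The only caveat is the degenerate case $n=1$, where $\FQ_1\simeq K_1$ is (vacuously) bipartite and your length-$1$ ``cycle'' does not exist; the theorem is implicitly meant for $n\geq 2$, and your construction silently assumes this. Your alternative direct check --- that the Hamming-weight bipartition of $Q_{n-1}$ survives the addition of diagonal edges exactly when $n-1$ is odd --- is also correct and arguably the shortest route to the ``if'' direction.
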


\begin{thm}[Xu, Ma \cite{Ming/Meijie:2006}]
Every edge of $\FQ_n$ lies on every cycle of even length from $4$ to $2^{n-1}$. Moreover, if $n$ is odd then every edge lies also on every cycle of odd length from $n$ to $2^{n-1}-1$.
\end{thm}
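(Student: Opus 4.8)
The plan is to reduce the edge-version of the statement to a pure existence statement about cycle lengths, and then to realise the required cycles using the spanning hypercube together with a single diagonal edge. The key tool is the arc-transitivity of $\FQ_n$ recalled just above: arc-transitivity implies edge-transitivity, so $\mathrm{Aut}(\FQ_n)$ acts transitively on $E(\FQ_n)$. Hence if $\FQ_n$ contains even a single cycle $C$ of some length $\ell$, then for an arbitrary edge $e'$ I may choose an edge $e$ of $C$ and an automorphism $\varphi$ with $\varphi(e)=e'$, so that $\varphi(C)$ is an $\ell$-cycle through $e'$. It therefore suffices to show that $\FQ_n$ admits a cycle of every even length in $\{4,6,\dots,2^{n-1}\}$ and, when $n$ is odd, of every odd length in $\{n,n+2,\dots,2^{n-1}-1\}$.

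For the even lengths I would work entirely inside the hypercube. Recall that $\FQ_n$ is obtained from $Q_{n-1}$ by adding the diagonal edges, so $Q_{n-1}$ is a spanning subgraph of $\FQ_n$ on $2^{n-1}$ vertices. Hypercubes are bipancyclic --- a classical fact provable by induction on the dimension via the decomposition $Q_m = Q_{m-1}\,\square\,K_2$ --- so $Q_{n-1}$ already contains a cycle of every even length from $4$ up to its order $2^{n-1}$. All of these are cycles of $\FQ_n$, which disposes of the entire even range (for both parities of $n$), the top value $\ell=2^{n-1}$ corresponding to a Hamiltonian cycle.

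For the odd lengths, which occur precisely when $n$ is odd, I would use exactly one diagonal edge. By Remark~\ref{rem:FQ} an odd cycle cannot have all its labels occurring an even number of times, so the label $d$ must appear an odd number of times; thus at least one diagonal edge is unavoidable, and a single one is the natural choice. Take the diagonal edge $d^*$ joining $\mathbf 0=(0,\dots,0)$ to $\mathbf 1=(1,\dots,1)$. In $Q_{n-1}$ these two vertices are antipodal, and since $n-1$ is even they lie in the same part of the bipartition, so every $\mathbf 0$--$\mathbf 1$ path in $Q_{n-1}$ has even length; adjoining $d^*$ to such a path of length $\ell'$ closes it into a cycle of odd length $\ell'+1$. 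Consequently I only need $\mathbf 0$--$\mathbf 1$ paths of every even length from the distance $n-1$ up to $2^{n-1}-2$ (one below a Hamiltonian path, which cannot join two equal-parity vertices), and these produce exactly the odd cycle lengths $n,n+2,\dots,2^{n-1}-1$.

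The crux of the argument, and the step I expect to be the main obstacle, is this last path-length statement: between two antipodal vertices of $Q_m$ (here $m=n-1$) there exist paths of every length of the correct parity, from their distance $m$ up to $2^m-2$. This is the bipanconnectedness of the hypercube, a classical but more delicate property than bipancyclicity. I would establish the precise version needed by induction on $m$, splitting $Q_m$ into two copies of $Q_{m-1}$ joined by a perfect matching: short target lengths are obtained within one copy by the inductive hypothesis, while longer ones are assembled by running a path in one copy, crossing a matching edge, and continuing in the other, with the length and parity bookkeeping arranged to keep the endpoints at $\mathbf 0$ and $\mathbf 1$. With this lemma in hand, the three ingredients --- edge-transitivity, bipancyclicity of $Q_{n-1}$, and bipanconnectedness of $Q_{n-1}$ --- combine to give the theorem; the finitely many small cases $n\le 3$ are checked directly.
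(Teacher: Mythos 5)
The paper does not actually prove this statement: it is quoted from Xu and Ma \cite{Ming/Meijie:2006} as a known result, so there is no internal proof to compare yours against. Your argument is nonetheless a sound, self-contained route. The reduction via edge-transitivity is valid and consistent with the paper's observation that folded cubes are distance- and hence arc-transitive; the even-length range does follow from bipancyclicity of the spanning $Q_{n-1}$ (with the cases $n\le 3$ vacuous or trivial); and for odd $n$ you correctly reduce the odd-length range, via the diagonal $d^*$ and the parity constraint of \cref{rem:FQ}, to the existence of $\mathbf 0$--$\mathbf 1$ paths in $Q_{n-1}$ of every even length between the distance $n-1$ and $2^{n-1}-2$, the arithmetic of which checks out. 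The one load-bearing piece you leave unproved is precisely that bipanconnectedness lemma: it is a true classical property of hypercubes and your inductive plan (split $Q_m$ into two copies of $Q_{m-1}$, realise short lengths inside one copy and long lengths by crossing the matching) is the standard proof, but the ``length and parity bookkeeping'' you defer is where essentially all of the work sits, so a complete write-up would have to carry out that induction in detail or cite it explicitly. With that lemma supplied, your proof is complete and arguably more informative than the paper's bare citation.
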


\subsection{\texorpdfstring{\ifbig \boldmath \fi$4$}{4}-cycles in folded cubes}
First we focus our study on $4$-cycles.
There are already some results known in this area.

\begin{thm}[Mirafzal \cite{Mirafzal:2016}]
Folded cube $\FQ_5$ is $[2, 3, 4]$-cycle regular. Any other folded cube $\FQ_n$ is $[2, 1, 4]$-cycle regular.
\end{thm}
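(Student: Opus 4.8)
The plan is to count the $4$-cycles through a fixed $2$-path and to show that this count is the same for all $2$-paths. Fix a path $P=u\,v\,w$. A $4$-cycle containing $P$ has the form $u\,v\,w\,z\,u$, where $z\neq v$ is adjacent to both $u$ and $w$; conversely each common neighbour of $u$ and $w$ other than $v$ produces exactly one such cycle. Since $v$ is itself a common neighbour of $u$ and $w$, the number of $4$-cycles through $P$ is $|N(u)\cap N(w)|-1$, so it suffices to determine $|N(u)\cap N(w)|$. Because each vertex of $\FQ_n$ meets exactly one diagonal edge, there are only two kinds of $2$-paths: those using two hypercube edges, and those using one hypercube edge and one diagonal edge. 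I would show that the common-neighbour count is identical for both kinds, which together with vertex-transitivity proves that $\FQ_n$ is genuinely $[2,\lambda,4]$-cycle regular and gives $\lambda=|N(u)\cap N(w)|-1$.

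To evaluate the count I would classify the cycles $u\,v\,w\,z\,u$ through $P$ by the parity rule of \cref{rem:FQ}: along any cycle the labels $\{1,\dots,n-1,d\}$ all occur an even number of times, or all occur an odd number of times. In the even case the length $4$ forces exactly two labels, each used twice; with the labels $a\neq b$ already fixed on $P$, the only way to close the cycle without reusing $v$ is to repeat $a$ and $b$, giving the single ``hypercube square'' $z=u\oplus\delta_b$, so there is always exactly one even-type $4$-cycle through $P$ (the same conclusion holds verbatim when one of $a,b$ is the diagonal label, with $z=\overline u$). In the odd case every one of the $n$ labels must occur an odd, hence positive, number of times, and these counts must sum to $4$; this is possible only when the folded cube has exactly four labels, each appearing once. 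When it does, the two labels absent from $P$ may be assigned to the edges $wz$ and $zu$ in either order, and both assignments close up because the $\oplus$-sum of all four difference vectors vanishes ($e_1\oplus e_2\oplus e_3\oplus\mathbf 1=0$); they yield two distinct vertices $z$, hence two additional $4$-cycles.

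Combining the two cases, every $2$-path lies in exactly one $4$-cycle in all folded cubes except the one with precisely four edge-labels, where it lies in $1+2=3$; this exceptional cube is the bipartite one isomorphic to $K_{4,4}$, for which $\lambda=3$, while every remaining folded cube has $\lambda=1$. I would finish by checking the few degenerate small cubes directly (for instance $\FQ_3\cong K_4$, which is again $[2,1,4]$-cycle regular). The delicate points are not the arithmetic but two structural matters: verifying that the diagonal label behaves exactly like a coordinate label, so that both kinds of $2$-path really give the same count and the graph is genuinely $[2,\lambda,4]$-regular, and controlling the odd-count bookkeeping of \cref{rem:FQ} tightly enough to prove that the ``all-odd'' configuration occurs for exactly one dimension.
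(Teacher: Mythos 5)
Your counting is sound, and since the paper imports this theorem from Mirafzal without proof, there is no internal argument to compare against; judged on its own, the decomposition into the ``all-even'' and ``all-odd'' label patterns of \cref{rem:FQ} is exactly the right tool, and your two counts ($1$ even-type square through every $2$-path, plus $2$ odd-type squares precisely when every label must appear exactly once) are correct. Two small loose ends: the all-odd case with two labels and multiplicities $(1,3)$ is not excluded by your parity bookkeeping alone and should be dismissed by noting $\FQ_2\simeq K_2$ has no $4$-cycles; and vertex-transitivity is not actually needed, since your label argument is uniform over all $2$-paths already.

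The substantive issue is the identity of the exceptional cube. Your argument shows the extra $4$-cycles occur exactly in the folded cube with four edge labels $\{1,2,3,d\}$, which under this paper's convention ($\FQ_n$ is $Q_{n-1}$ plus diagonals, so it carries $n$ labels) is $\FQ_4\simeq K_{4,4}$ --- consistent with the paper's own \cref{thm:FQ4-cycles}, which also singles out $\FQ_4$. The statement you were asked to prove instead names $\FQ_5$. A direct check confirms your version: in $\FQ_5=Q_4+D$ the all-odd pattern would require five odd multiplicities summing to $4$, which is impossible, so every $2$-path of $\FQ_5$ lies in exactly one $4$-cycle, whereas in $K_{4,4}$ every $2$-path lies in three. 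So the printed statement carries an off-by-one shift (Mirafzal defines $\FQ_n$ as $Q_n$ plus diagonals, so his exceptional $FQ_3=K_{4,4}$ becomes this paper's $\FQ_4$, not $\FQ_5$). Your proof therefore does not establish the statement as literally written --- it establishes the corrected form with $\FQ_4$ in place of $\FQ_5$. You should make the identification of ``the cube with four labels'' as $\FQ_4$ explicit and flag the index discrepancy rather than leaving the reader to reconcile your $K_{4,4}$ with the theorem's $\FQ_5$.
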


We settle the following result on $[1, \lambda, 4]$-cycle regularity of folded cubes.

\thmFQncycles*

\begin{proof}
Since $\FQ_1 \simeq K_1$, $\FQ_2 \simeq K_2$ and $\FQ_4 \simeq K_{4,4}$, the claim trivially holds for $n\in \{1, 2, 4\}$.
So assume $n \notin \{1, 2, 4\}$ and observe that two diagonal edges cannot be consecutive, therefore an arbitrary $4$-cycle admits at most two of them.
In fact, with the exception of $\FQ_4$, $4$-cycles in all folded cubes admit either $2$ or $0$ diagonal edges (see \cref{rem:FQ}).

To prove that an $\FQ_n$ is $[1,n-1,4]$-cycle regular, when $n \notin \{2, 4\}$, we need to take an arbitrary edge, denote it $e$, and check in how many $4$-cycles it lies. We have two possibilities, either $e \in D$ or $e \in H$.
In the case $e \in D$, the $4$-cycle consists of an additional diagonal edge and two hypercube edges.
Let us label $e$ with $d$ and an arbitrary edge from $H$ with $i$.
There are $n-1$ such cycles and they are of the form $(d,i,d,i)$.
In the case when $e \in H$, the $4$-cycle admits also either $2$ or $0$ diagonal edges. Let $d$ be an edge from $D$ and $i$ an edge from $H$.
If there are $2$ diagonal edges the cycle is uniquely determined and is of form $(e,d,e,d)$. If there are no diagonal edges the cycle is of form $(e,i,e,i)$. Since there are $n-2$ choices for $i$, there are $n-2$ such cycles.
\end{proof}

\subsection{\texorpdfstring{\ifbig \boldmath \fi$6$}{6}-cycles in folded cubes}

In this section we settle $[1,\lambda,6]$ and $[2, \lambda, 6]$-cycle regularity of folded cubes.
Before we start listing $6$-cycles we need the following definition.
\begin{defin}
Let $\FQ_n$ be a folded cube of dimension $n$ and let $F_n = \{1, 2, \dots, n-1\} \cup \{ d\}$.
Two $6$-cycles, $C_1 = (a_1, a_2, a_3, a_4, a_5, a_6)$ and $C_2 = (b_1, b_2, b_3, b_4, b_5, b_6)$, for $a_i, b_i \in F_n$ ($1 \leq i \leq 6$), are said to be equivalent,
if there exists a permutation $f : F_n \rightarrow F_n$, where $f(d) = d$ and $f(C_1) = C_2$, where $$f(C_1) := (f(a_1), f(a_2), f(a_3), f(a_4), f(a_5), f(a_6)) .$$
\end{defin}
For better understanding of the definition see the following example.
\begin{example}
Let $G = \FQ_7$ and let
$ C_1=(1,2,3,1,2,3),  C_2 = (4,5,6,4,5,6), C_3 = (4,d,6,4,d,6),$
be $6$-cycles in $G$.\\
Cycles $C_1, C_2$ are equivalent, since for $f: F_7 \rightarrow F_7$ given by
\begin{equation*}
f = 
\begin{pmatrix}
1 & 2 & 3 & 4 & 5 &6 & d \\
4 & 5 & 6 & 2 & 3 & 1 & d \\
\end{pmatrix}
\end{equation*}
holds $C_1(f) = C_2$.
However, $C_3$ is not equivalent to any of the given $6$-cycles, because there does not exist such a permutation on $F_7$ that would map $2$ or $5$ to $d$.
\end{example}


\subsection{Determining \texorpdfstring{\ifbig \boldmath \fi$[1,\lambda,6]$}{[1,lambda,6]}-cycle regularity}
To determine in how many $6$-cycles an edge lies, 
we need to fix a hypercube or diagonal edge and count the number of cycles going through it. 
By studying both possibilities we not only determine the $\lambda$ value, but also give a full characterization of all possible $6$-cycles in folded cubes.
\subsubsection{Fixing hypercube edge in a \texorpdfstring{\ifbig \boldmath \fi $6$}{6}-cycle}
In this part we write all $6$-cycles where the first edge is a hypercube edge.
Because of the equivalence of $6$-cycles, we can write them starting with the edge labeled $1$.
We distinguish these cycles by the number of diagonal edges they admit. Since two diagonal edges are never consecutive, a $6$-cycle can admit either $0, 1, 2$ or $3$ of them.
Throughout this section, $n$ is the dimension of the folded cube $\FQ_n$, diagonal edge is denoted by $d$ and $i,j,k,l > 1$ are distinct, positive integers, smaller than $n$.
All of the calculation is summarized in \cref{tab:FQ6-cycles1} and \cref{fig:cylesFQ_n-starting1}.

\begin{figure}[ht!]
  \centering
  \begin{subfigure}{.24\textwidth}
    \centering
    \includegraphics[width=\linewidth]{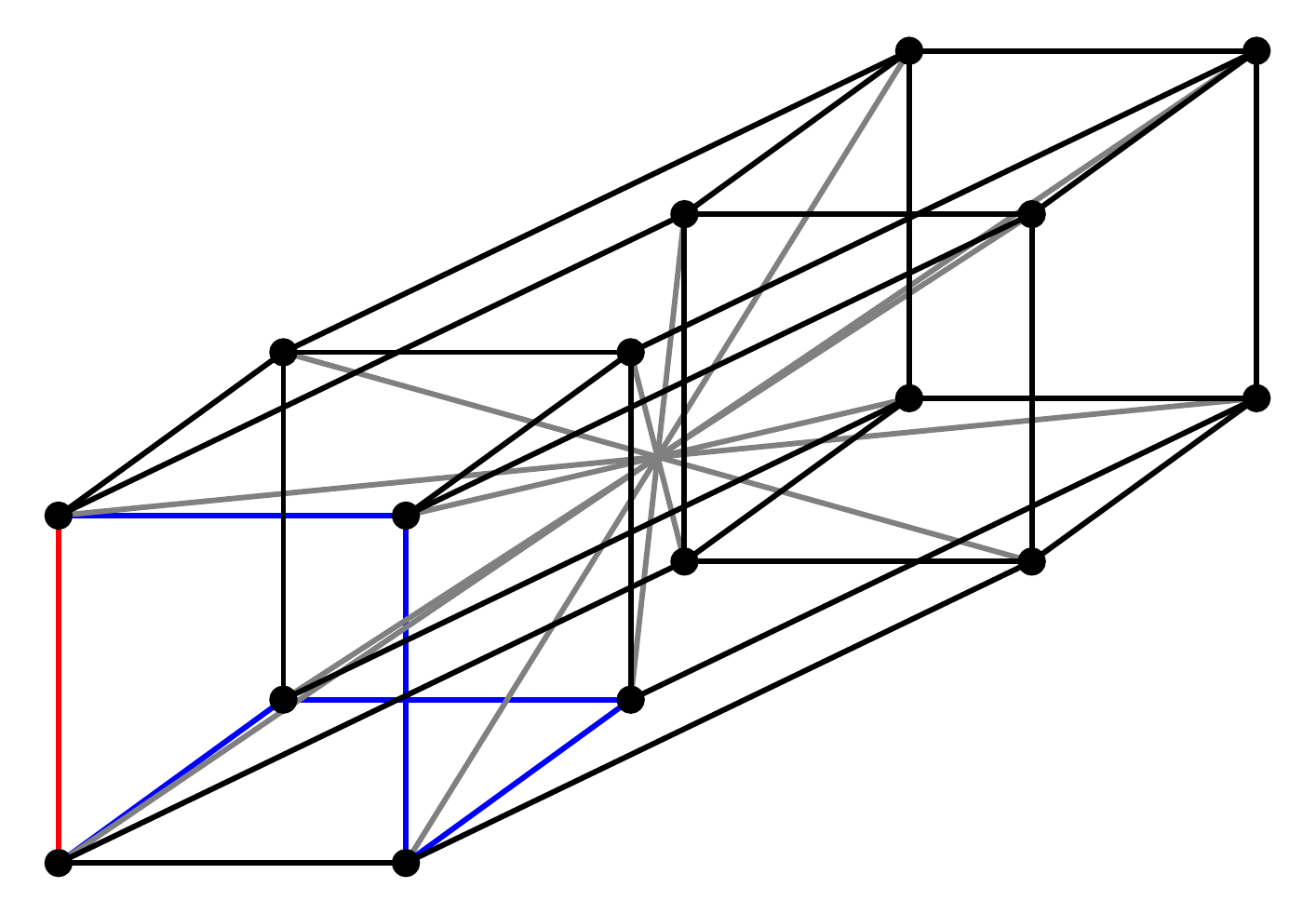}
    \caption{Cycle $C_0$.}
    \label{fig:FQnC00}
  \end{subfigure}
  \begin{subfigure}{.24\textwidth}
    \centering
    \includegraphics[width=\linewidth]{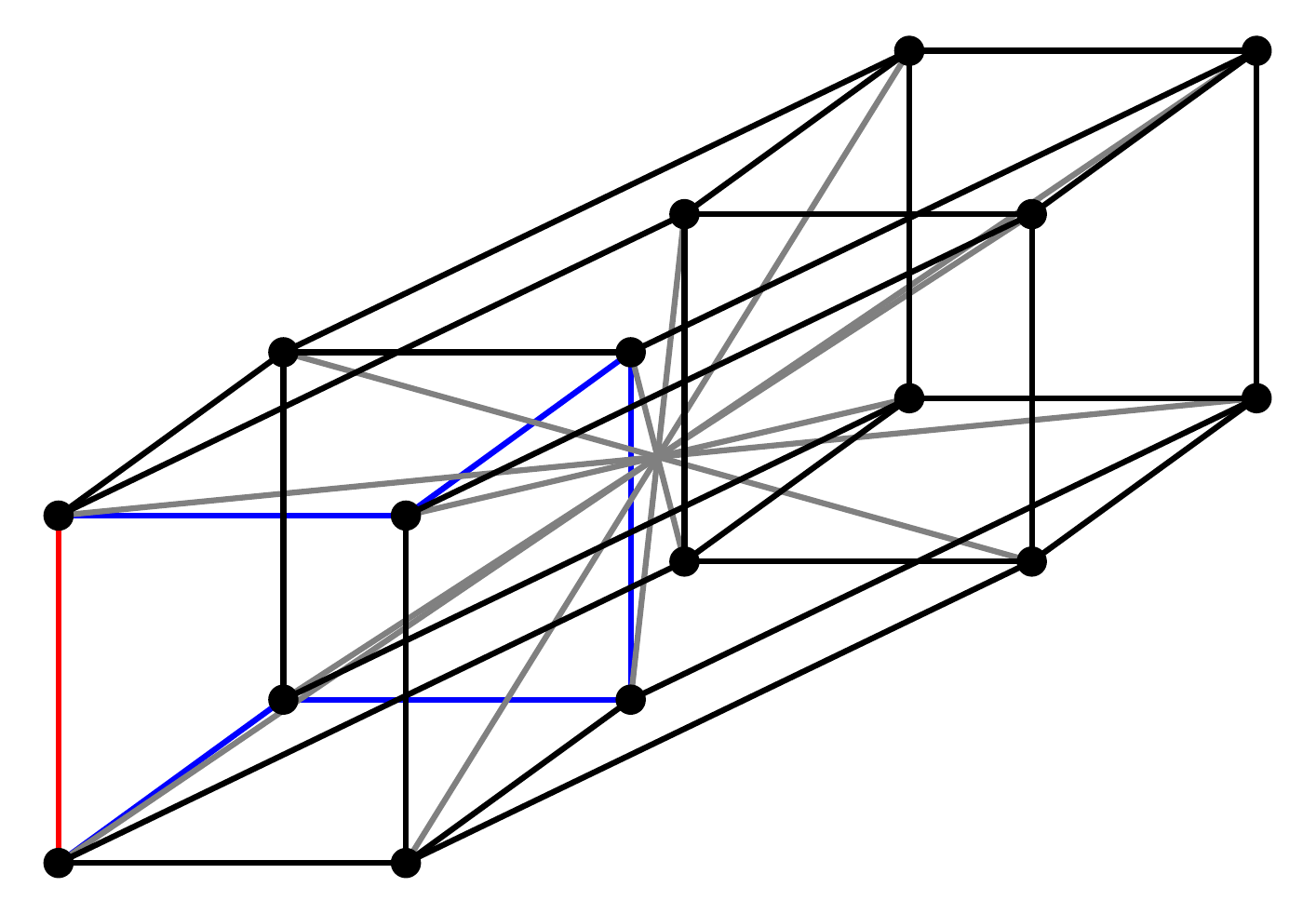}
    \caption{Cycle $C_1$.}
    \label{fig:FQnC0}
  \end{subfigure}
  \begin{subfigure}{.24\textwidth}
    \centering
    \includegraphics[width=\linewidth]{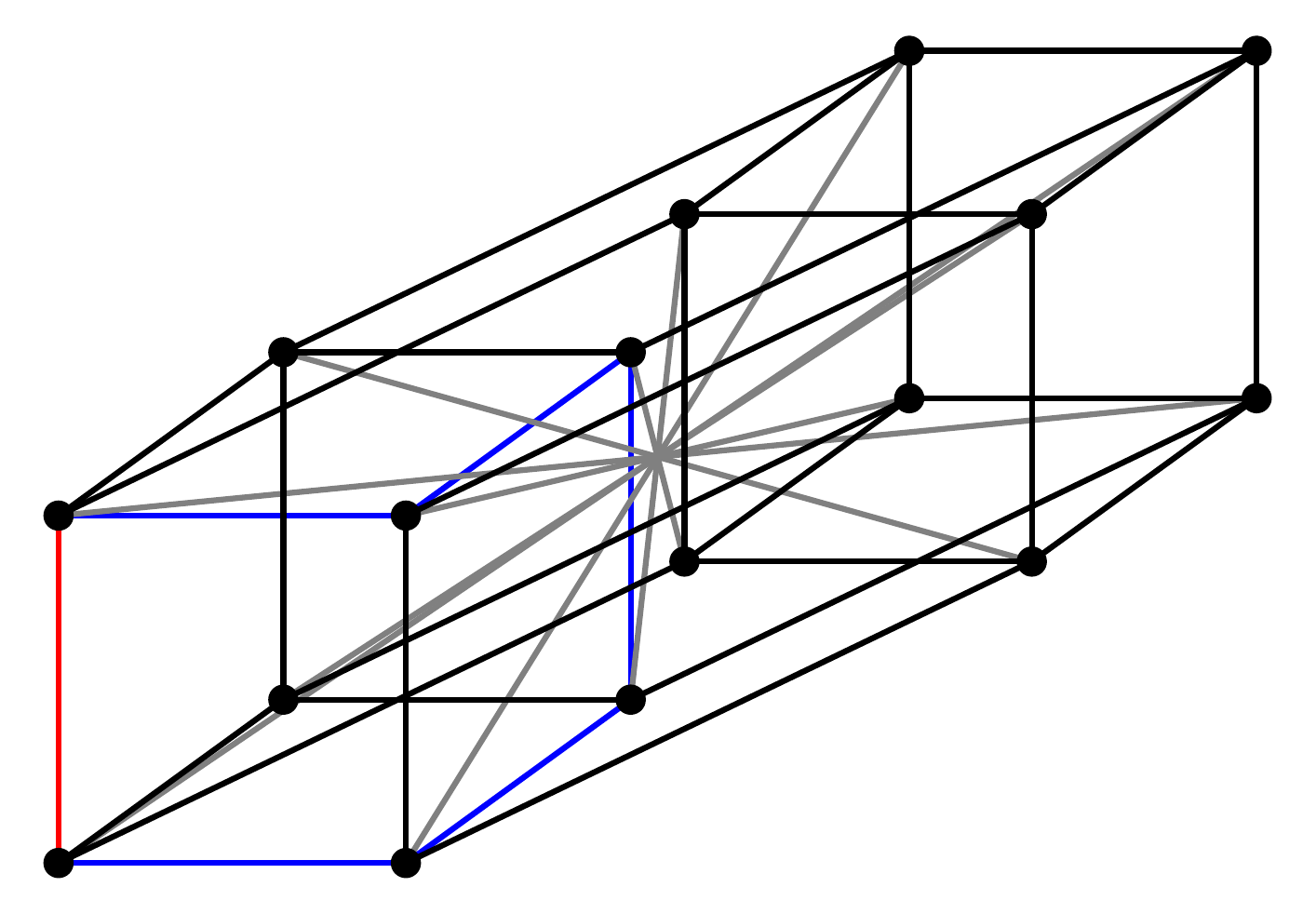}
    \caption{Cycle $C_2$.}
    \label{fig:FQnC1}
  \end{subfigure}
  \begin{subfigure}{.24\textwidth}
    \centering
    \includegraphics[width=\linewidth]{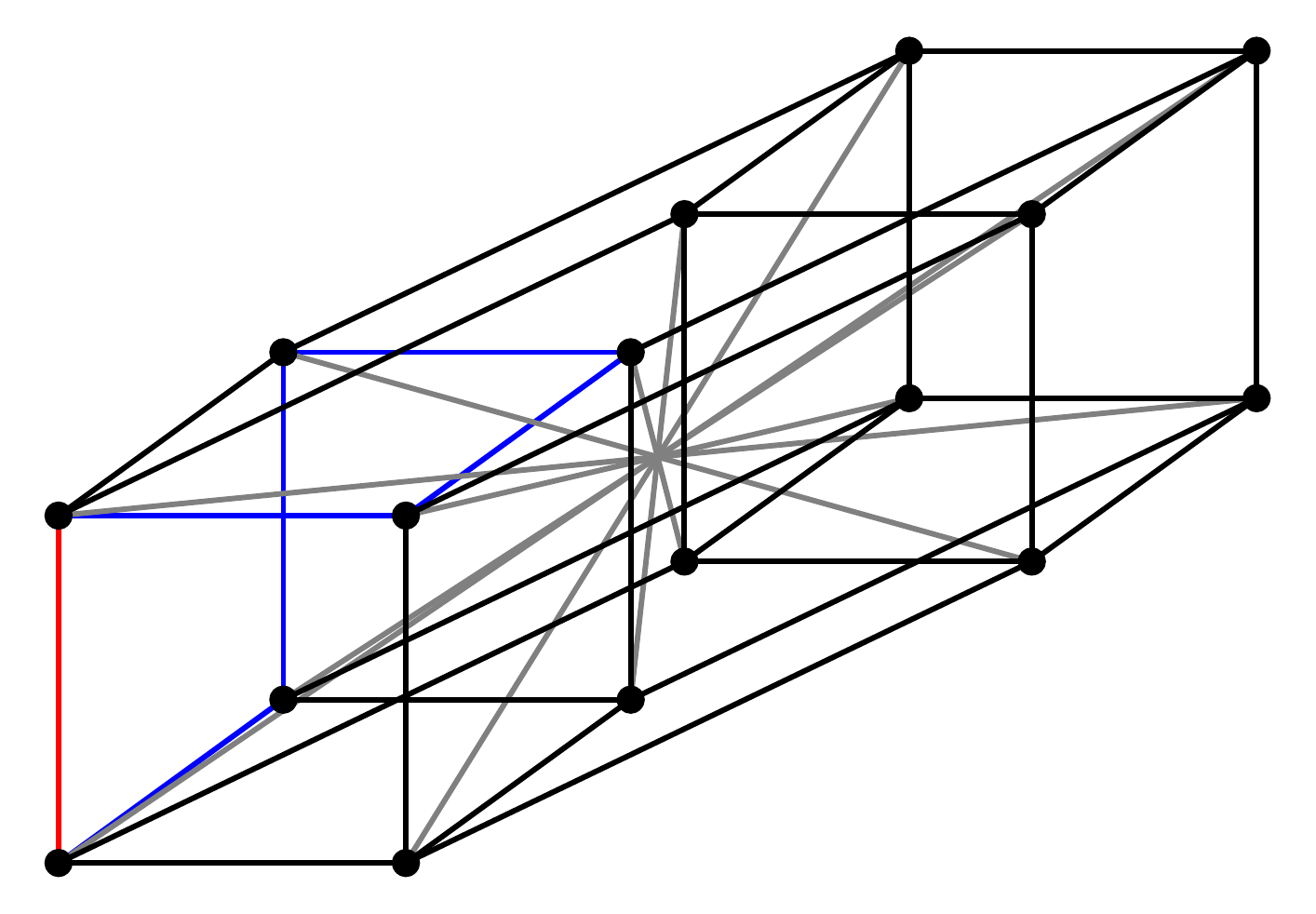}
    \caption{Cycle $C_3$.}
    \label{fig:FQnC03}
  \end{subfigure}
  
  \begin{subfigure}{.24\textwidth}
    \centering
    \includegraphics[width=\linewidth]{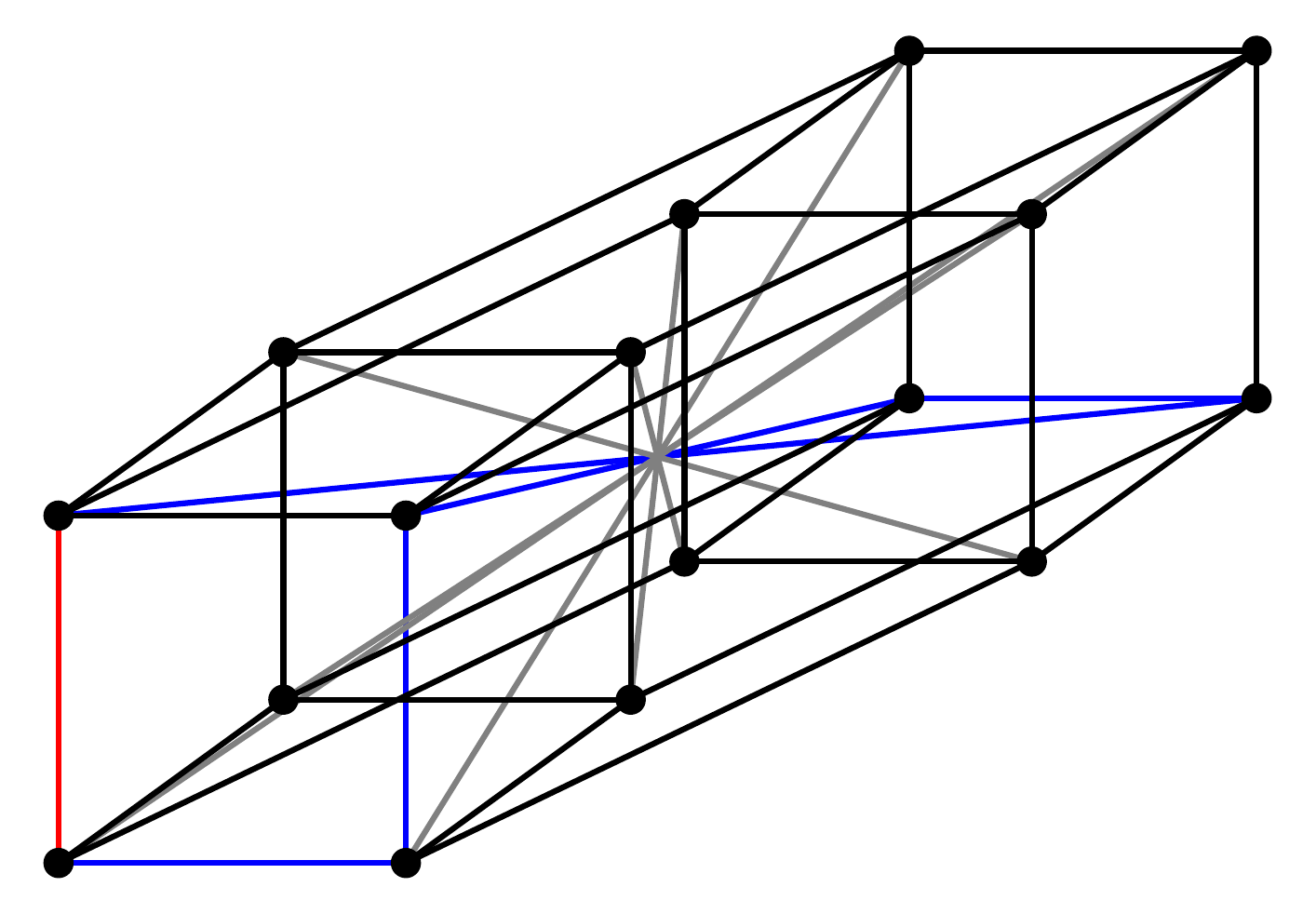}
    \caption{Cycle $C_4$.}
    \label{fig:FQnC2}
  \end{subfigure}
   \begin{subfigure}{.24\textwidth}
    \centering
    \includegraphics[width=\linewidth]{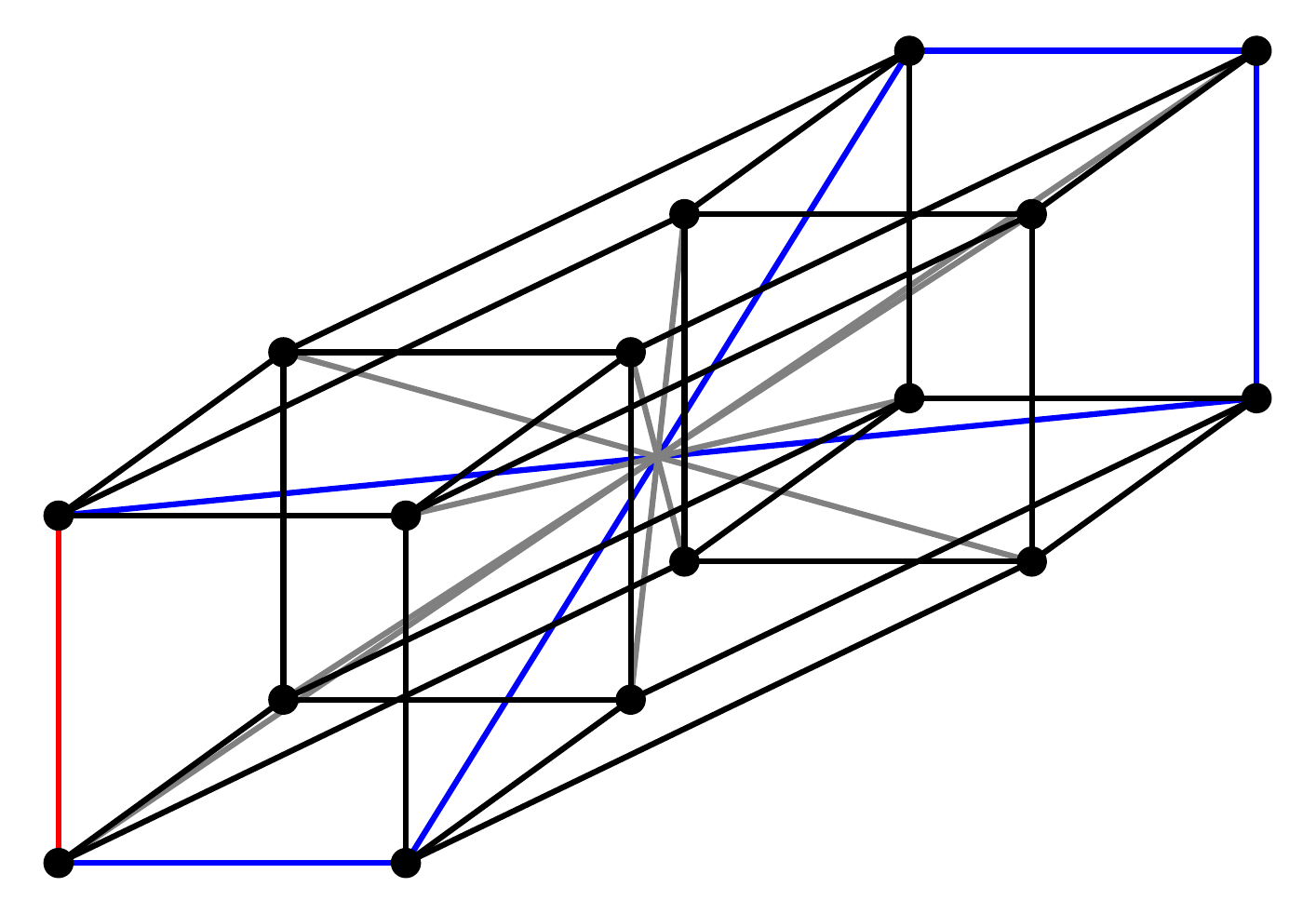}
    \caption{Cycle $C_5$.}
    \label{fig:FQnC3}
  \end{subfigure}
  \begin{subfigure}{.24\textwidth}
    \centering
    \includegraphics[width=\linewidth]{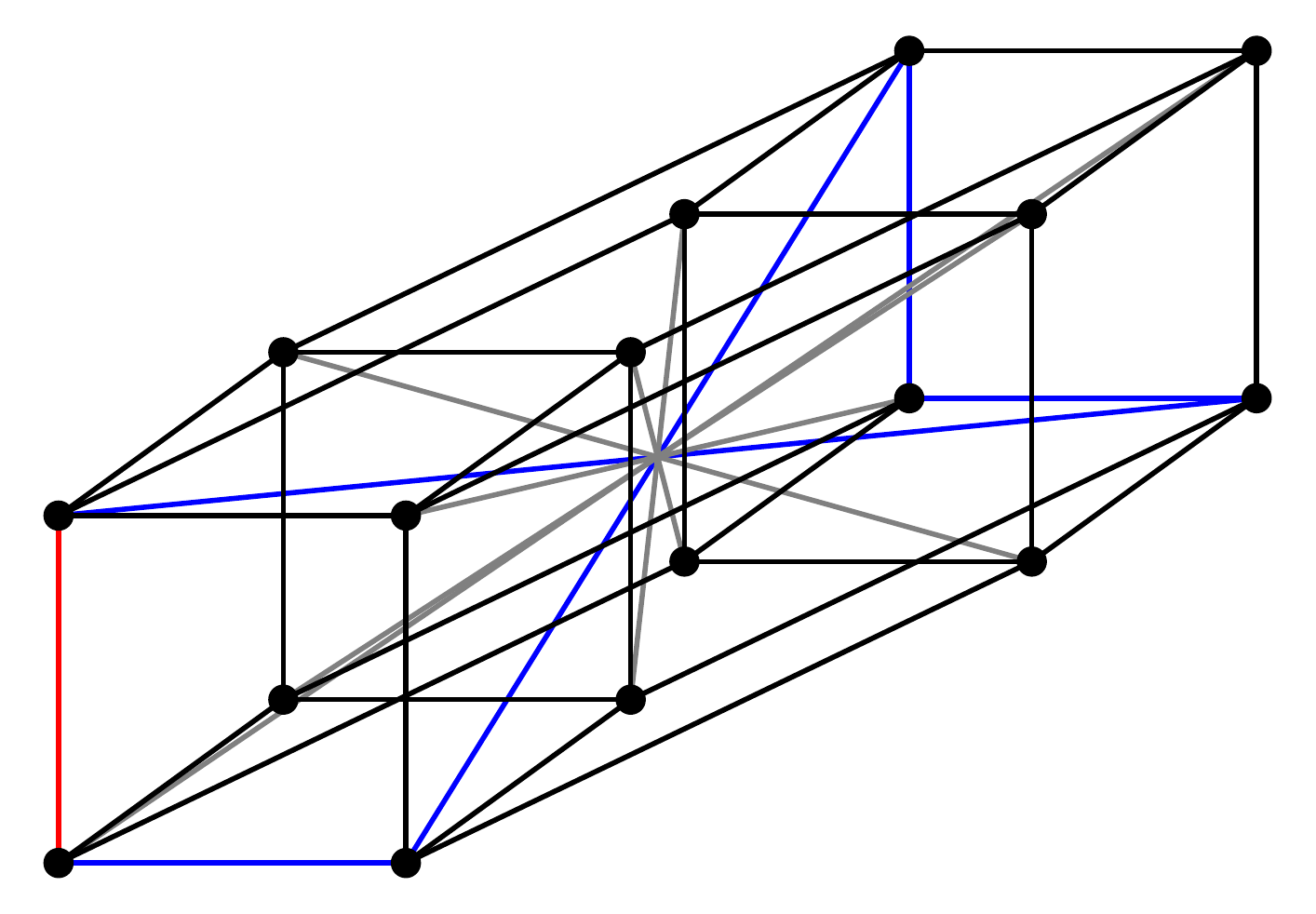}
    \caption{Cycle $C_6$.}
    \label{fig:FQnC4}
  \end{subfigure}
  \begin{subfigure}{.24\textwidth}
    \centering
    \includegraphics[width=\linewidth]{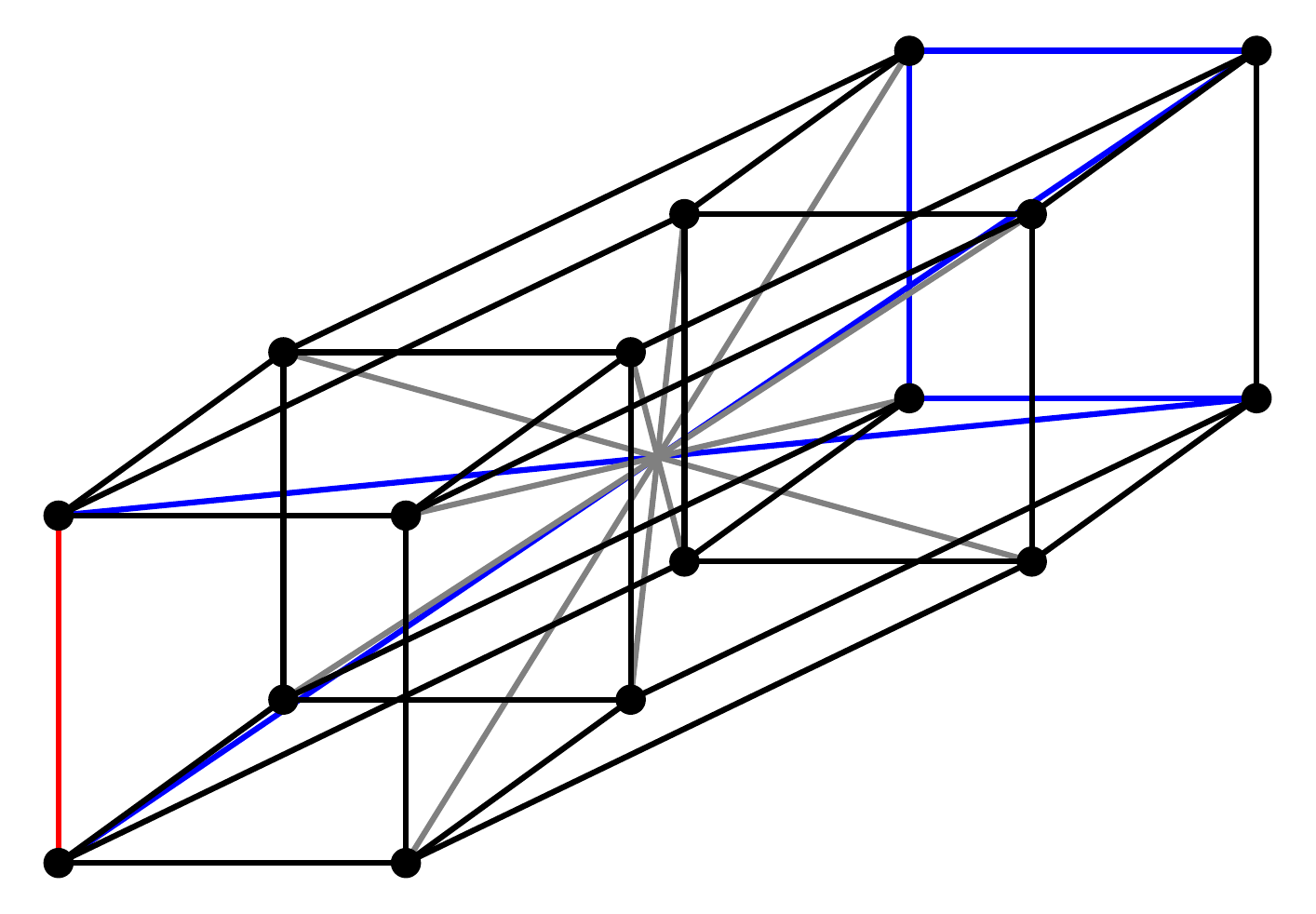}
    \caption{Cycle $C_7$.}
    \label{fig:FQnC5}
  \end{subfigure}
  
   \begin{subfigure}{.24\textwidth}
    \centering
    \includegraphics[width=\linewidth]{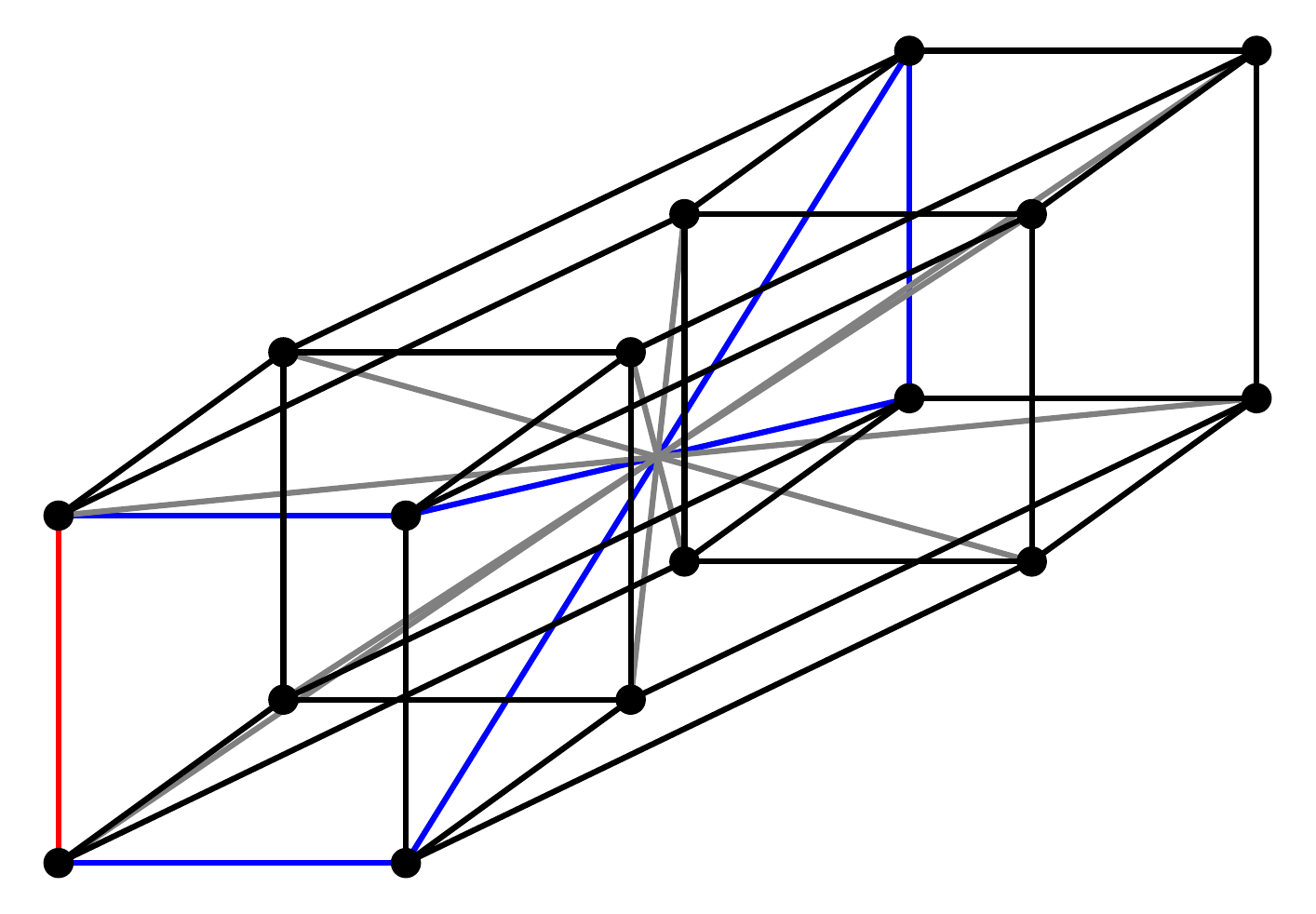}
    \caption{Cycle $C_8$.}
    \label{fig:FQnC6}
  \end{subfigure}
  \begin{subfigure}{.24\textwidth}
    \centering
    \includegraphics[width=\linewidth]{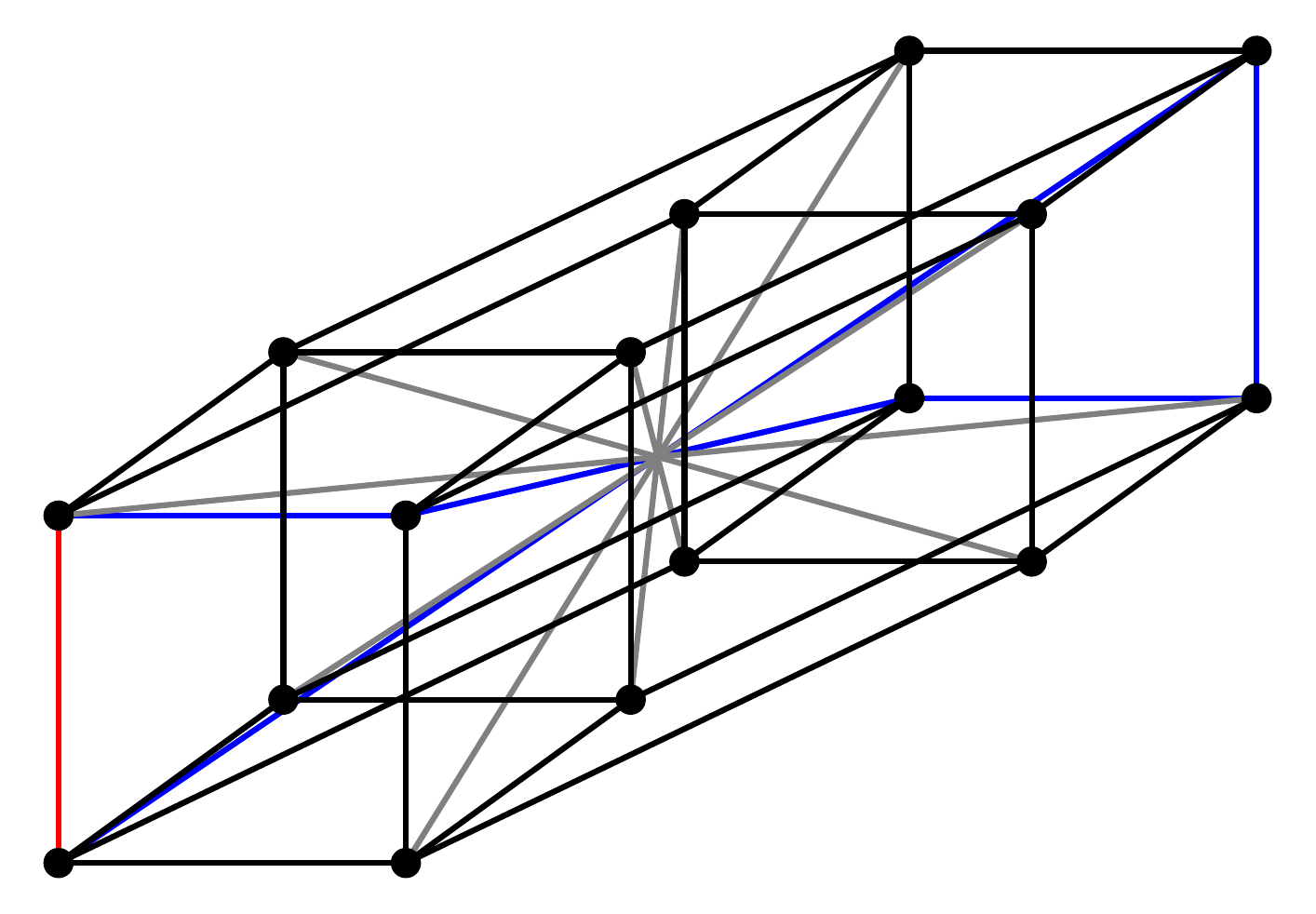}
    \caption{Cycle $C_9$.}
    \label{fig:FQnC7}
  \end{subfigure}
  \begin{subfigure}{.24\textwidth}
    \centering
    \includegraphics[width=\linewidth]{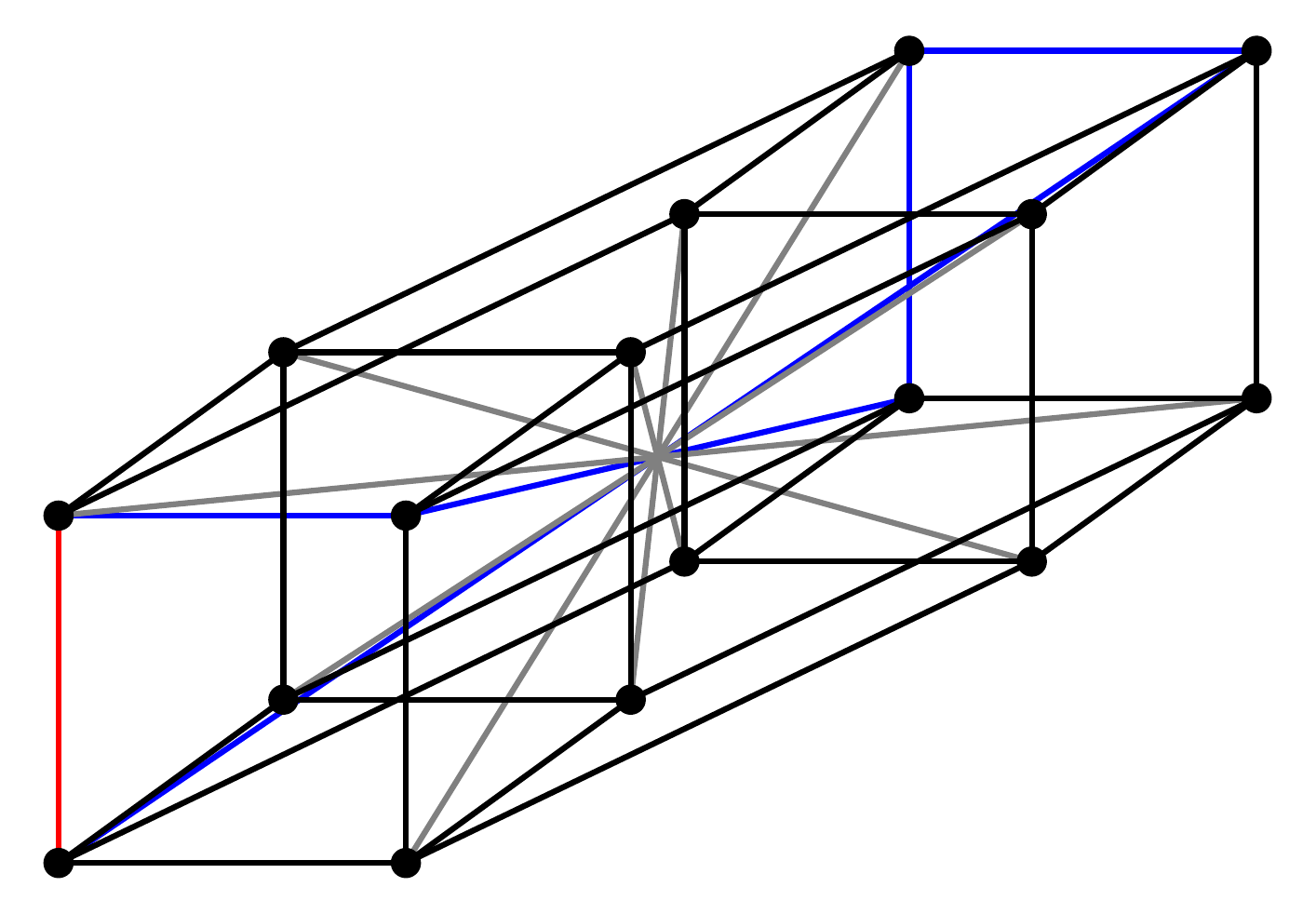}
    \caption{Cycle $C_{10}$.}
    \label{fig:FQnC8}
  \end{subfigure}
   \begin{subfigure}{.24\textwidth}
    \centering
    \includegraphics[width=\linewidth]{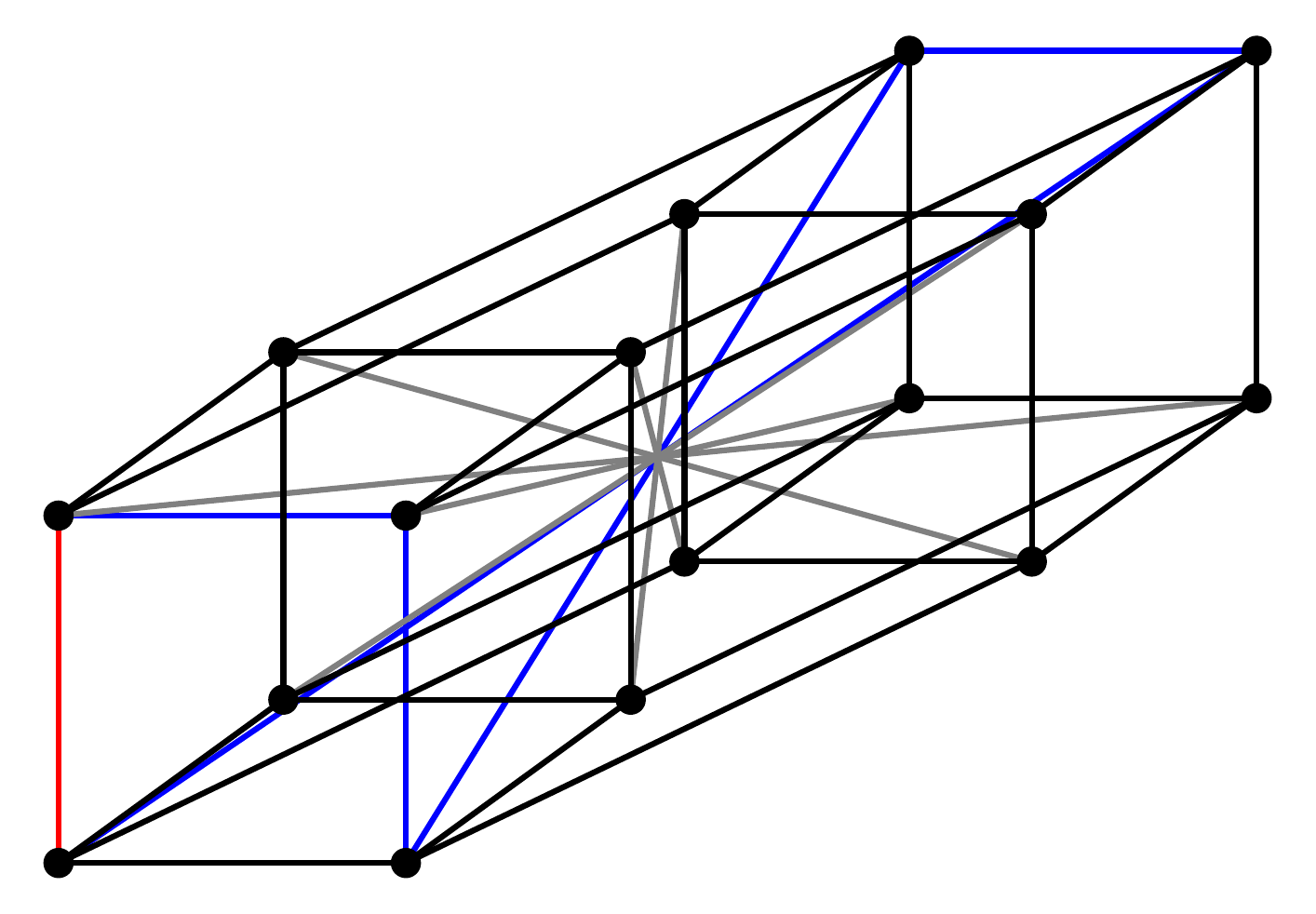}
    \caption{Cycle $C_{11}$.}
    \label{fig:FQnC9}
  \end{subfigure}
  \caption{Examples of all non-equivalent $6$-cycles in $\FQ_5$ starting with edge $1$. \label{fig:cylesFQ_n-starting1}}
\end{figure}
\textbf{If a cycle admits no diagonal edges} it has to be of one of the following forms:
\begin{equation*}
\begin{aligned}[t]
C_0 &= (1, i, 1, j, i, j) \quad \text{or} \\
C_2 &= (1, i, j, 1, j, i) \quad \text{or}
\end{aligned}
\qquad
\begin{aligned}[t]
C_1 &= (1, i, j, 1, i, j) \quad \text{or} \\
C_3 &= (1, i, j, i, 1, j).
\end{aligned}
\end{equation*}
The contribution of each cycle to the $\lambda$ value of a graph is $(n-2)(n-3)$.

\textbf{If a cycle admits only one diagonal edge} it has one of the five following forms:
\begin{equation*}
\begin{aligned}[t]
D_1 &= (1,d,i,j,k,l) \quad \text{or} \\
D_3 &= (1,i,j,d,k,l) \quad \text{or} \\
D_5 &= (1,i,j,k,l,d).
\end{aligned}
\qquad
\begin{aligned}[t]
D_2 &= (1,i,d,j,k,l) \quad \text{or} \\
D_4 &= (1,i,j,k,d,l) \quad \text{or} \\
\end{aligned}
\end{equation*}
These cycles exists only in the case of $\FQ_6$ and contribute $24$ to the $\lambda$ value of a graph.

\textbf{If a cycle admits two diagonal edges} it has to be of one of the following forms:
\begin{equation*}
\begin{aligned}[t]
C_4 &= (1, d, i, d, 1, i) \quad \text{or} \\
C_6 &= (1, d, i, 1, d, i) \quad \text{or} \\
C_8 &= (1, i, d, 1, d, i) \quad \text{or} \\
C_{10} &= (1, i, d, 1, i, d) \quad \text{or} \\
\end{aligned}
\qquad
\begin{aligned}[t]
C_5 &= (1, d, 1, i, d, i) \quad \text{or} \\
C_7 &= (1, d, i, 1, i, d) \quad \text{or} \\
C_9 &= (1, i, d, i, 1, d) \quad \text{or} \\
C_{11} &= (1, i, 1, d, i, d).
\end{aligned}
\end{equation*}
Each of these cycles contributes to the $\lambda$ value of a graph exactly $n-2$.

\textbf{If a cycle admits three diagonal edges} it is of the form $$D_6 = (1,d,i,d,j,d).$$
This cycle exists only in the case of $\FQ_4$ and contributes $2$ to the $\lambda$ value of a graph.

\begin{table}[ht!]
  \centering
  \resizebox{\ifbig 0.8 \fi \ifsmall 0.6 \fi \columnwidth}{!}{
  \begin{tabular}{c|c|c}
    \textbf{Label}         & \textbf{A representative of a \boldmath$6$-cycle}                                                                & \textbf{Contribution towards \boldmath$\lambda$} \\
    \hline
    $C_0$                  & $(1, i, 1, j, i, j) $ & $(n-2)(n-3)$ \\
    \hline
    $C_1$                  & $(1, i, j, 1, i, j)$ & $(n-2)(n-3)$ \\
    \hline
    $C_2$                  & $(1, i, j, 1, j, i)$ & $(n-2)(n-3)$ \\
    \hline
    $C_3$                  & $(1, i, j, i, 1, j)$ & $(n-2)(n-3)$ \\
    \hline
    $C_4$                 & $(1, d, i, d, 1, i)$ & $n-2$ \\
    \hline
    $C_5$                  & $(1, d, 1, i, d, i)$  & $n-2$ \\
    \hline
    $C_6$                  & $(1, d, i, 1, d, i)$ & $n-2$ \\
    \hline
    $C_7$                  & $(1, d, i, 1, i, d)$ & $n-2$ \\
    \hline
    $ C_8$                 & $(1, i, d, 1, d, i)$  & $n-2$ \\
    \hline
    $C_9$                  & $(1, i, d, i, 1, d)$  & $n-2$ \\
        \hline
    $C_{10}$                 & $(1, i, d, 1, i, d)$ & $n-2$ \\
    \hline
    $C_{11}$                  & $(1, i, 1, d, i, d)$ & $n-2$ \\
    
  \end{tabular}}
  \caption{All non-equivalent $6$-cycles in $\FQ_n$ ($n \notin \{1,2,3,4,6\}$) starting with hypercube edge $1$.}
  \label{tab:FQ6-cycles1}
\end{table}

\subsubsection{Fixing diagonal edge in a \texorpdfstring{\ifbig \boldmath \fi $6$}{6}-cycle}
Again we distinguish $6$-cycles by the number of diagonal edges they admit. Since these cycles start with $d^*$ they can admit either $1, 2$ or  $3$ diagonal edges.
All of the calculation is summarized in \cref{tab:FQ6-cycles2} and \cref{fig:cylesFQ_n-startingd}.

\begin{figure}[ht!]
  \centering
  \begin{subfigure}{.24\textwidth}
    \centering
    \includegraphics[width=\linewidth]{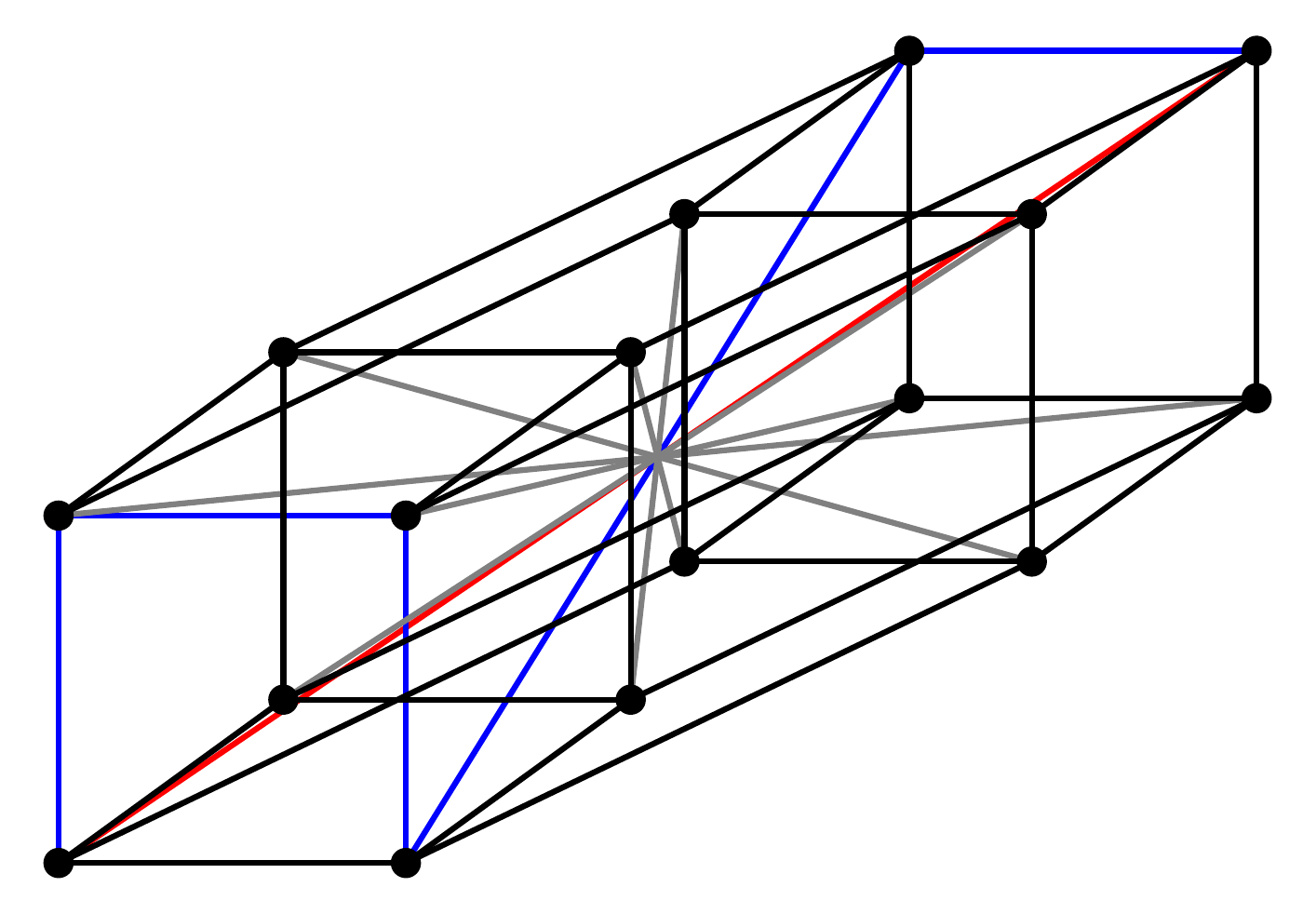}
    \caption{Cycle $C_a$.}
    \label{fig:FQnCa}
  \end{subfigure}
  \begin{subfigure}{.24\textwidth}
    \centering
    \includegraphics[width=\linewidth]{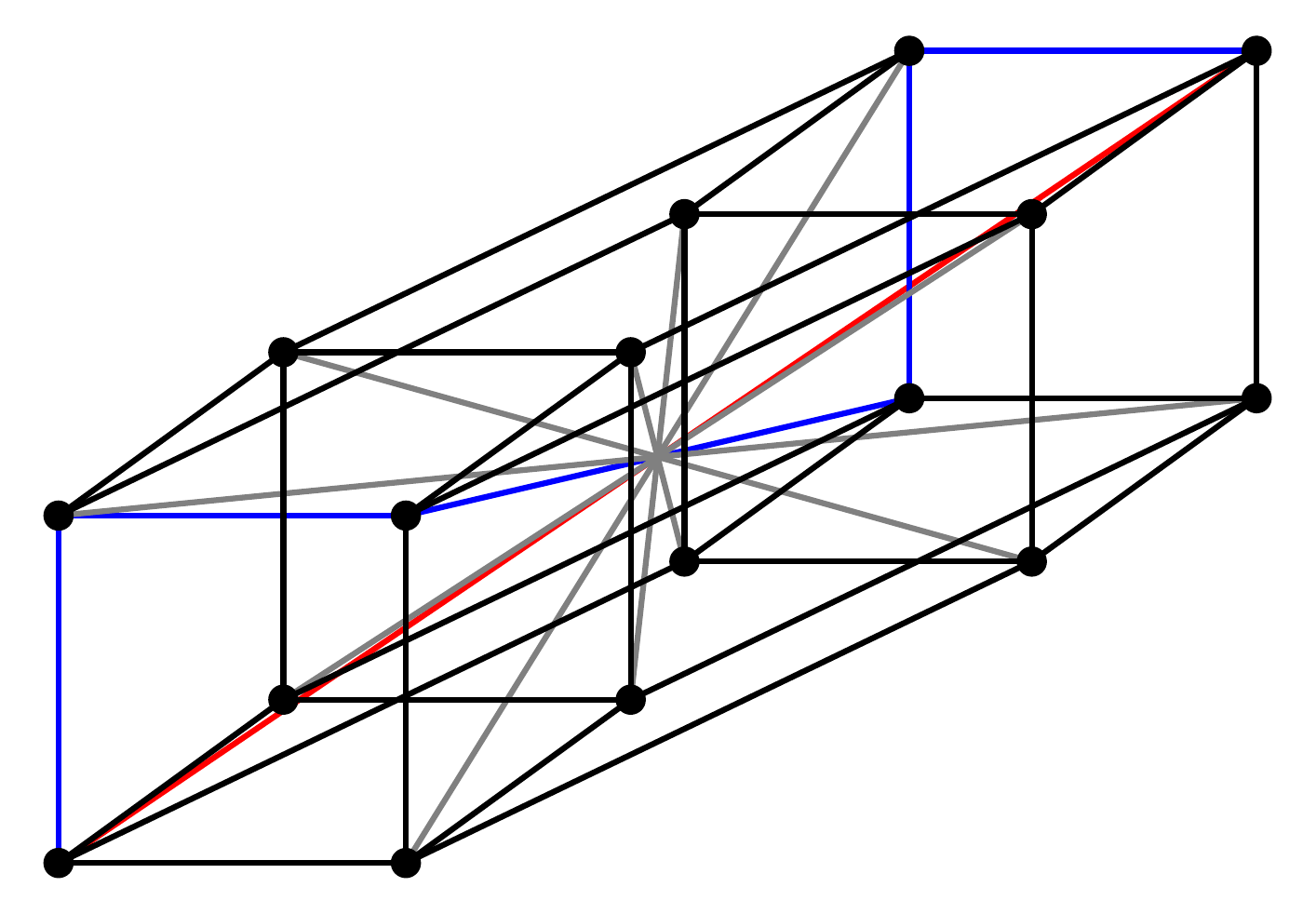}
    \caption{Cycle $C_b$.}
    \label{fig:FQnCb}
  \end{subfigure}
  \begin{subfigure}{.24\textwidth}
    \centering
    \includegraphics[width=\linewidth]{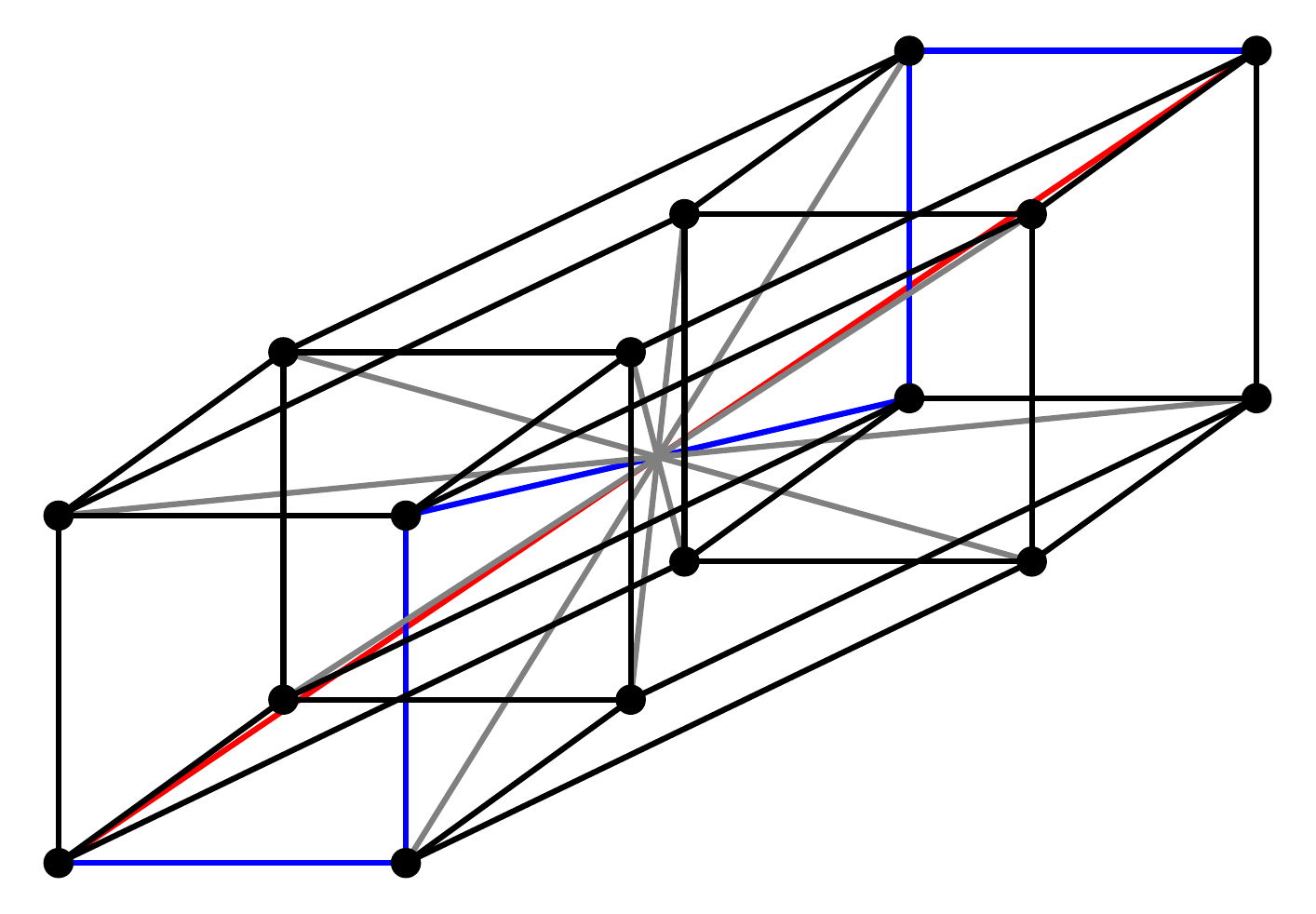}
    \caption{Cycle $C_c$.}
    \label{fig:FQnCc}
  \end{subfigure}
   \begin{subfigure}{.24\textwidth}
    \centering
    \includegraphics[width=\linewidth]{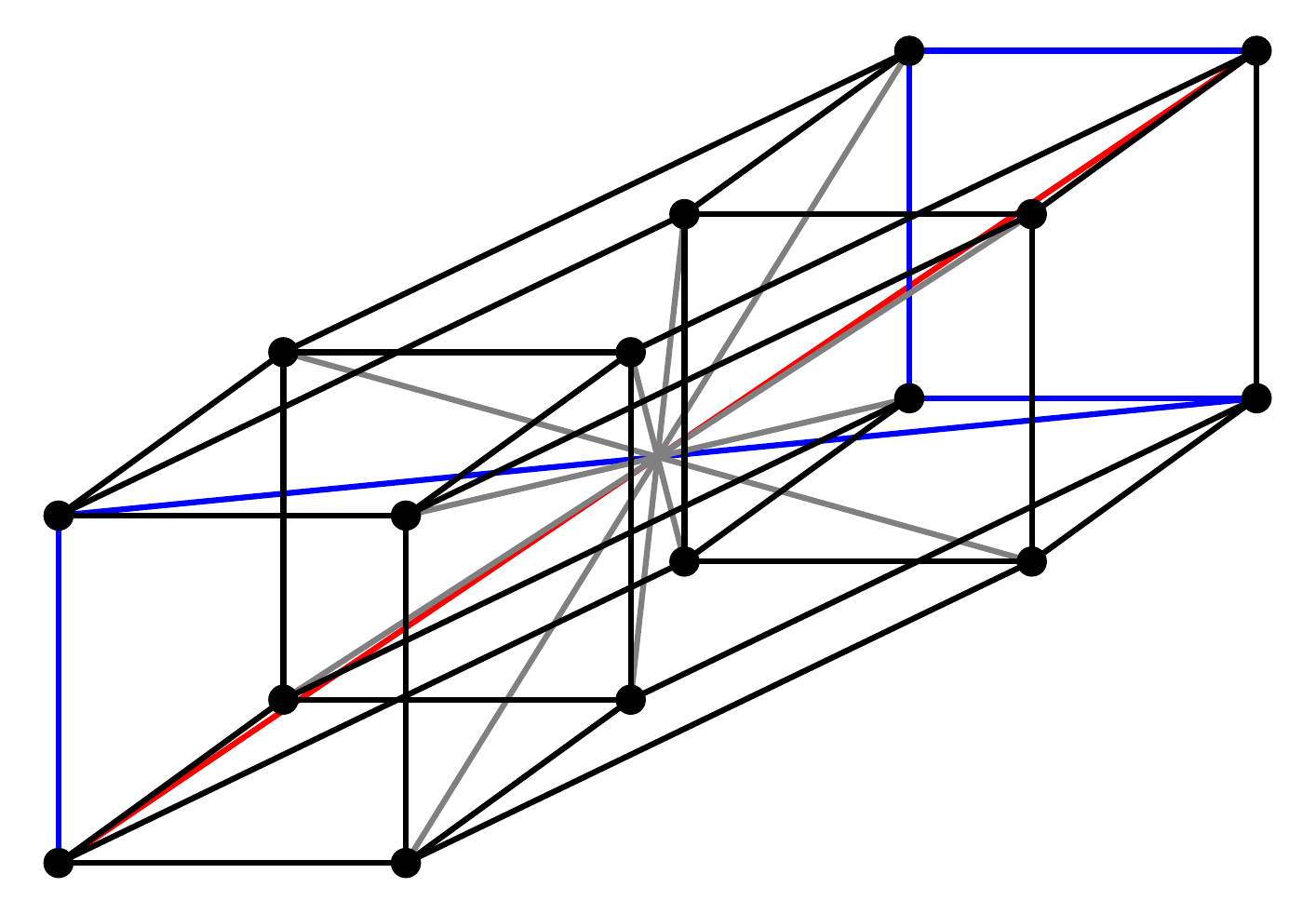}
    \caption{Cycle $C_d$.}
    \label{fig:FQnCd}
  \end{subfigure}

  \caption{Examples of all non-equivalent $6$-cycles in $\FQ_5$ starting with edge $d^*$. \label{fig:cylesFQ_n-startingd}}
\end{figure}

\textbf{If a cycle admits only one diagonal edge} it exists only in the case of $\FQ_6$ and is of the following form:
$$ D_a= (d^* \! \!, i,j,k,l,m),$$
where $1\leq m < n$ and $m \neq i,j,k,l$. It contributes $120$ to the $\lambda$ value of a graph.

\textbf{If a cycle admits two diagonal edges} it has one of the following forms:
\begin{equation*}
\begin{aligned}[t]
C_{a} &= (d^* \! \!, i, d, j, i, j) \quad \text{or} \\
C_{c} &= (d^* \! \!, i, j, d, j, i) \quad \text{or} \\
\end{aligned}
\qquad
\begin{aligned}[t]
C_{b} &= (d^* \! \!, i, j, d, i, j) \quad \text{or} \\
C_{d} &= (d^* \! \!, i, j, i, d, j).
\end{aligned}
\end{equation*}
These cycles contribute $(n-1) (n-2)$ to the $\lambda$ value of a graph.

\textbf{If a cycle admits three diagonal edges} it exists only in the case of $\FQ_4$ and is of the form:
$$ D_b= (d^* \! \! , i, d, j, d, k).$$
It contributes $6$ to the $\lambda$ value of a graph.

\begin{table}[ht!]
  \centering
  \resizebox{\ifbig 0.8 \fi \ifsmall 0.6 \fi \columnwidth}{!}{
  \begin{tabular}{c|c|c}
    \textbf{Label}         & \textbf{A representative of a \boldmath$6$-cycle}                                                                & \textbf{Contribution towards \boldmath$\lambda$} \\
    \hline
    $C_a$                  & $(d^*\! \!, i, d, j, i, j)$ & $(n-1)(n-2)$ \\
    \hline
    $C_b$                  & $(d^*\! \!, i, j, d, i, j)$ & $(n-1)(n-2)$ \\
    \hline
    $C_c$                 & $(d^*\! \!, i, j, d, j, i)$ & $(n-1)(n-2)$ \\
    \hline
    $C_d$                  & $(d^*\! \!, i, j, i, d, j)$  & $(n-1)(n-2)$ \\
  \end{tabular}}
  \caption{All non-equivalent $6$-cycles in $\FQ_n$ ($n \notin \{1,2,3,4,6\}$) starting with diagonal edge $d^*$.}
  \label{tab:FQ6-cycles2}
\end{table}

Using the above calculations we get the following result.
\cyclesFQ*

\subsection{Determining \texorpdfstring{\ifbig \boldmath \fi $[2,\lambda,6]$}{[2,lambda,6]}-cycle regularity}
Now we want to determine in how many different $6$-cycles an arbitrary path $p$ of length $2$ lies.
Since two diagonal edges are never incident we again have just two possibilities, one when $p$ consists of two hypercube edges and the other one when $p$ admits a hypercube and a diagonal edge.
We examine both cases and count in how many ways we can extend $p$ to a $6$ cycle in a folded cube.

\subsubsection{Path \texorpdfstring{\ifbig \boldmath \fi $p$}{p} consists of two hypercube edges}
Two incident edges cannot have the same label, therefore, because of the equivalence of two $6$-cycles, we can fix $p$ as the path on hypercube edges $1$ and $2$.
We want to determine in how many different $6$-cycles path $p$ lies.
We distinguish cases, regarding the number of diagonal edges the $6$-cycle admits.
All of the calculation is summarized in \cref{tab:FQ2-6-cycles} and \cref{fig:FQ2-6-cycles}.

\begin{figure}[ht!]
  \centering
  \begin{subfigure}{.24\textwidth}
    \centering
    \includegraphics[width=\linewidth]{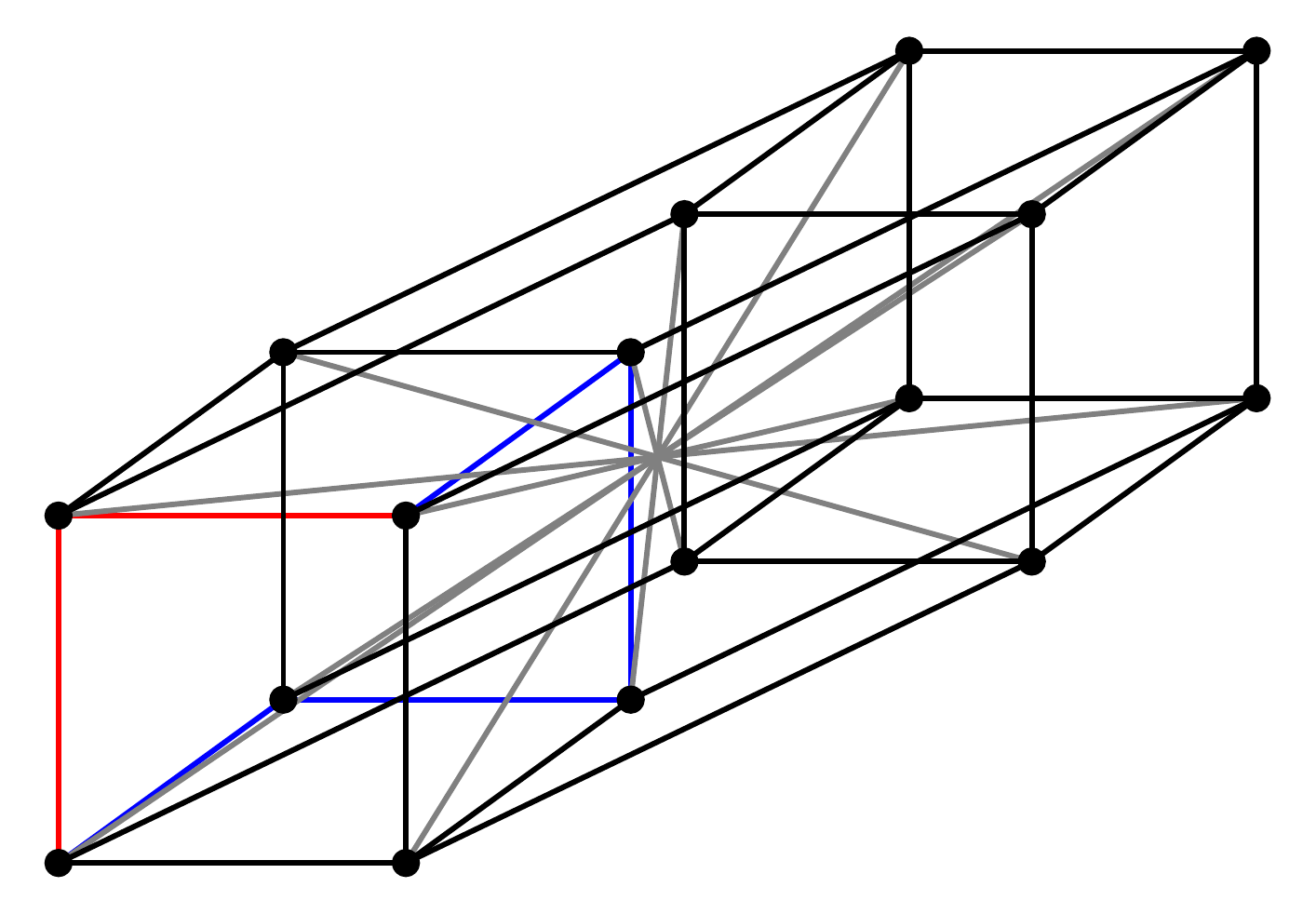}
    \caption{Cycle $C_0$.}
  \end{subfigure}
  \begin{subfigure}{.24\textwidth}
    \centering
    \includegraphics[width=\linewidth]{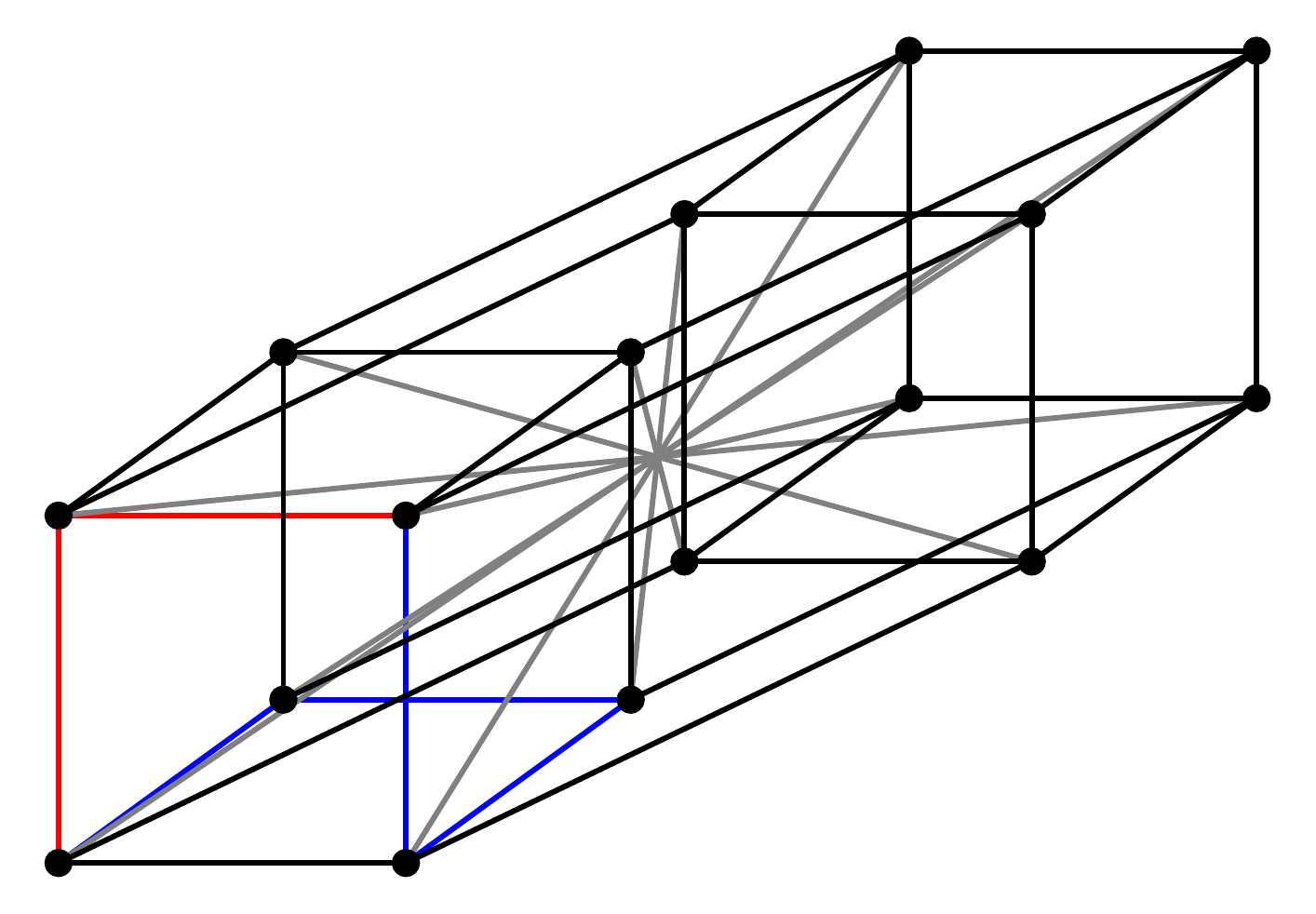}
    \caption{Cycle $C_1$.}
  \end{subfigure}
  \begin{subfigure}{.24\textwidth}
    \centering
    \includegraphics[width=\linewidth]{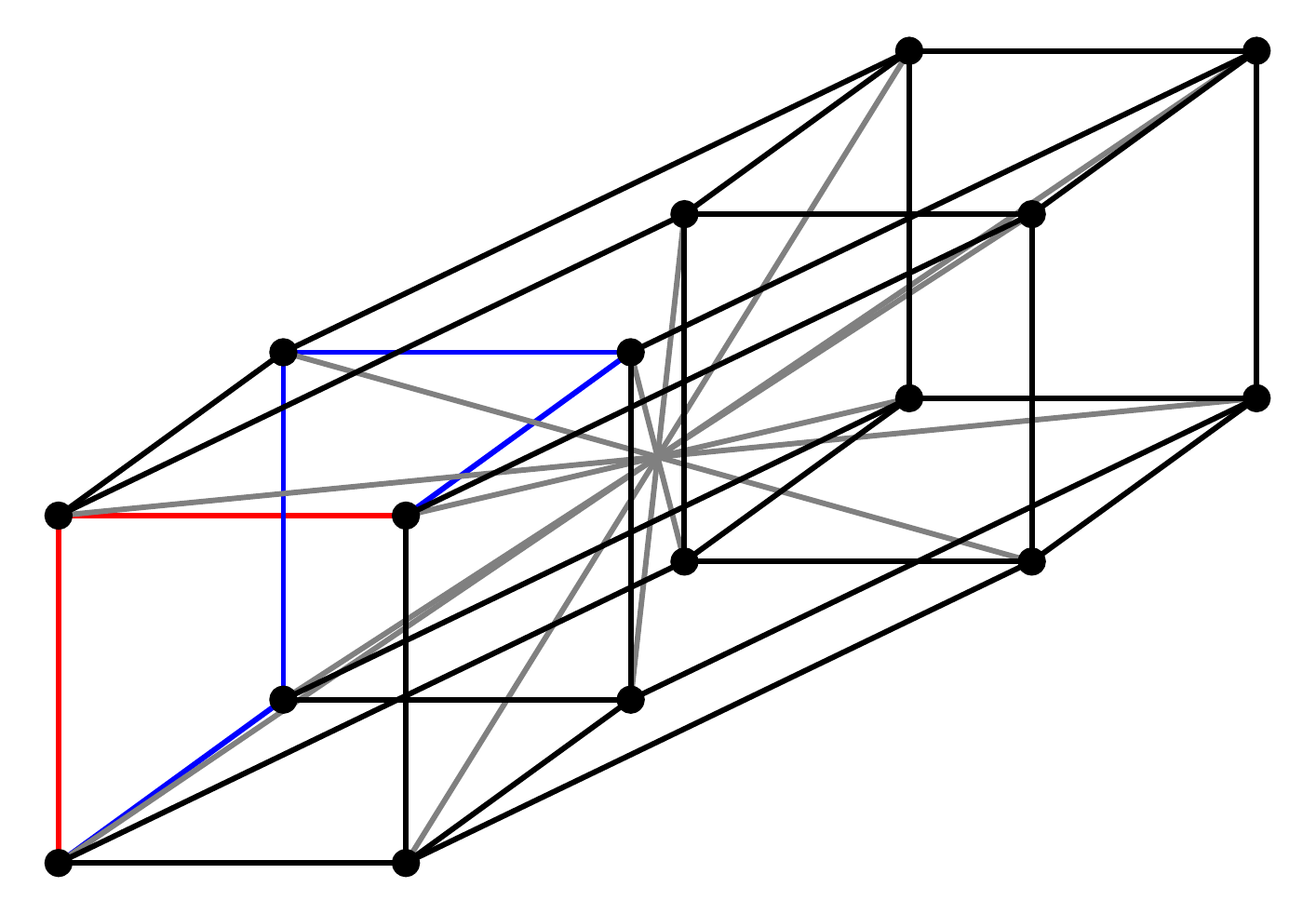}
    \caption{Cycle $C_2$.}
  \end{subfigure}
  \begin{subfigure}{.24\textwidth}
    \centering
    \includegraphics[width=\linewidth]{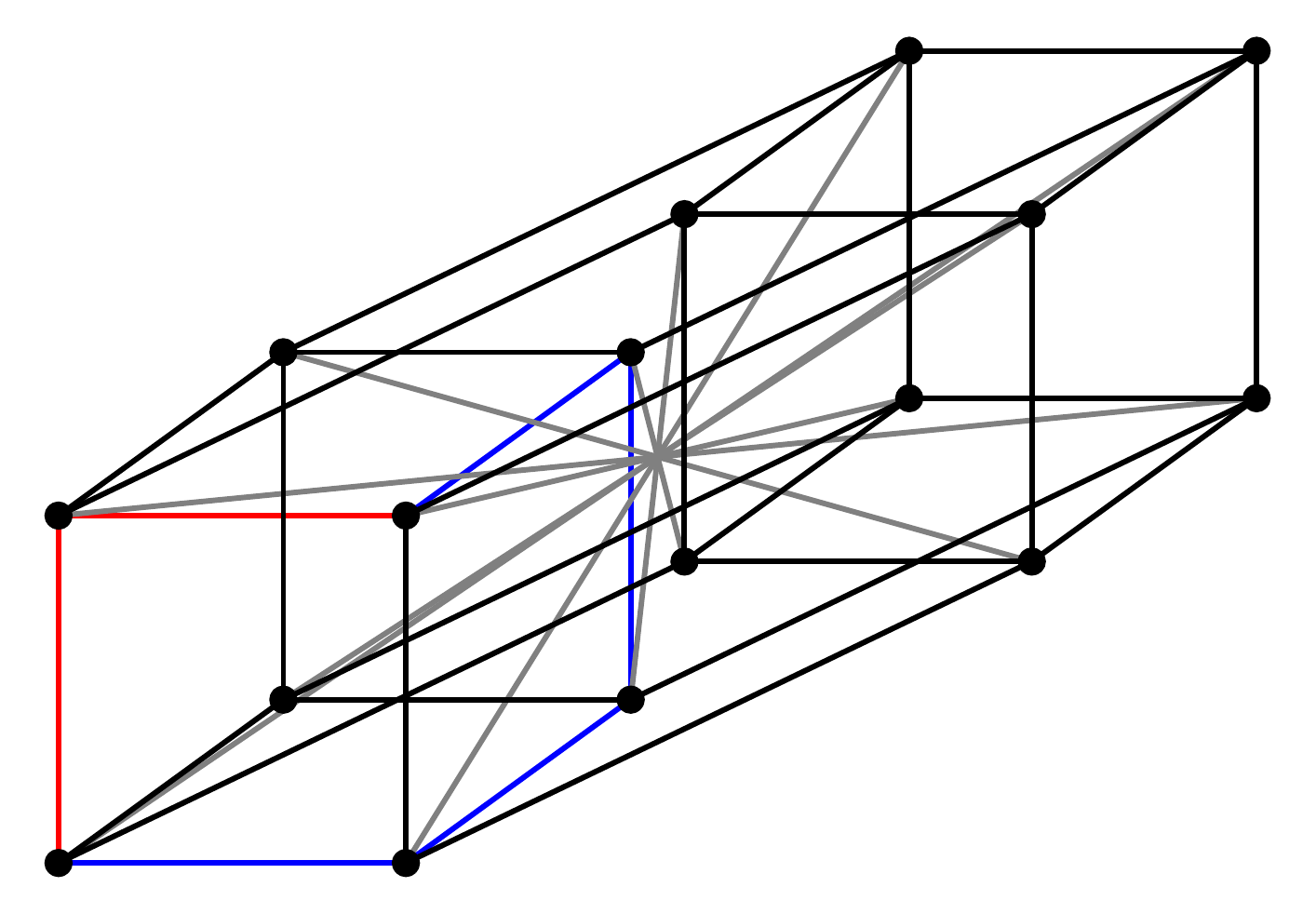}
    \caption{Cycle $C_3$.}
  \end{subfigure}
  
  \begin{subfigure}{.24\textwidth}
    \centering
    \includegraphics[width=\linewidth]{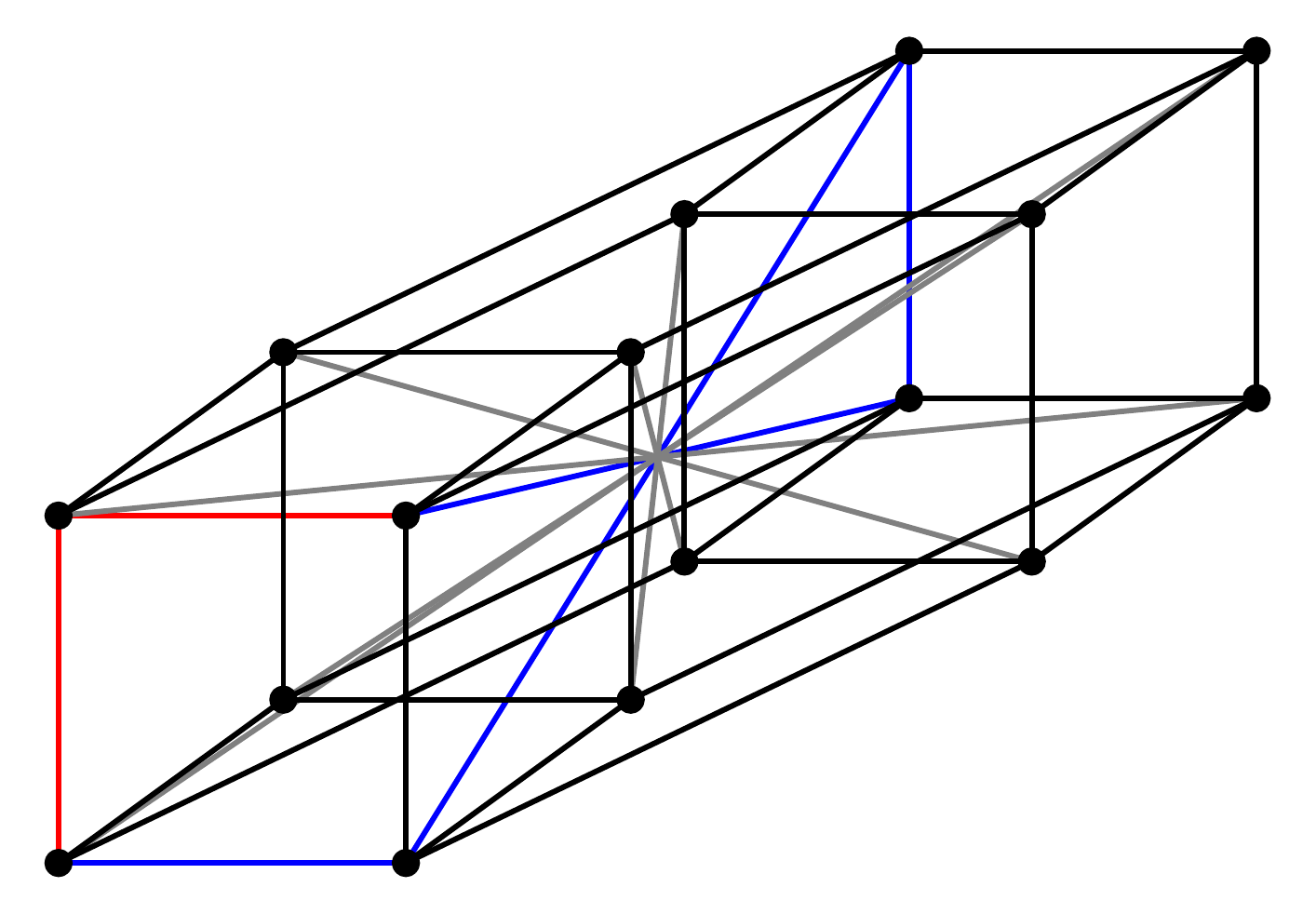}
    \caption{Cycle $C_4$.}
  \end{subfigure}
  \begin{subfigure}{.24\textwidth}
    \centering
    \includegraphics[width=\linewidth]{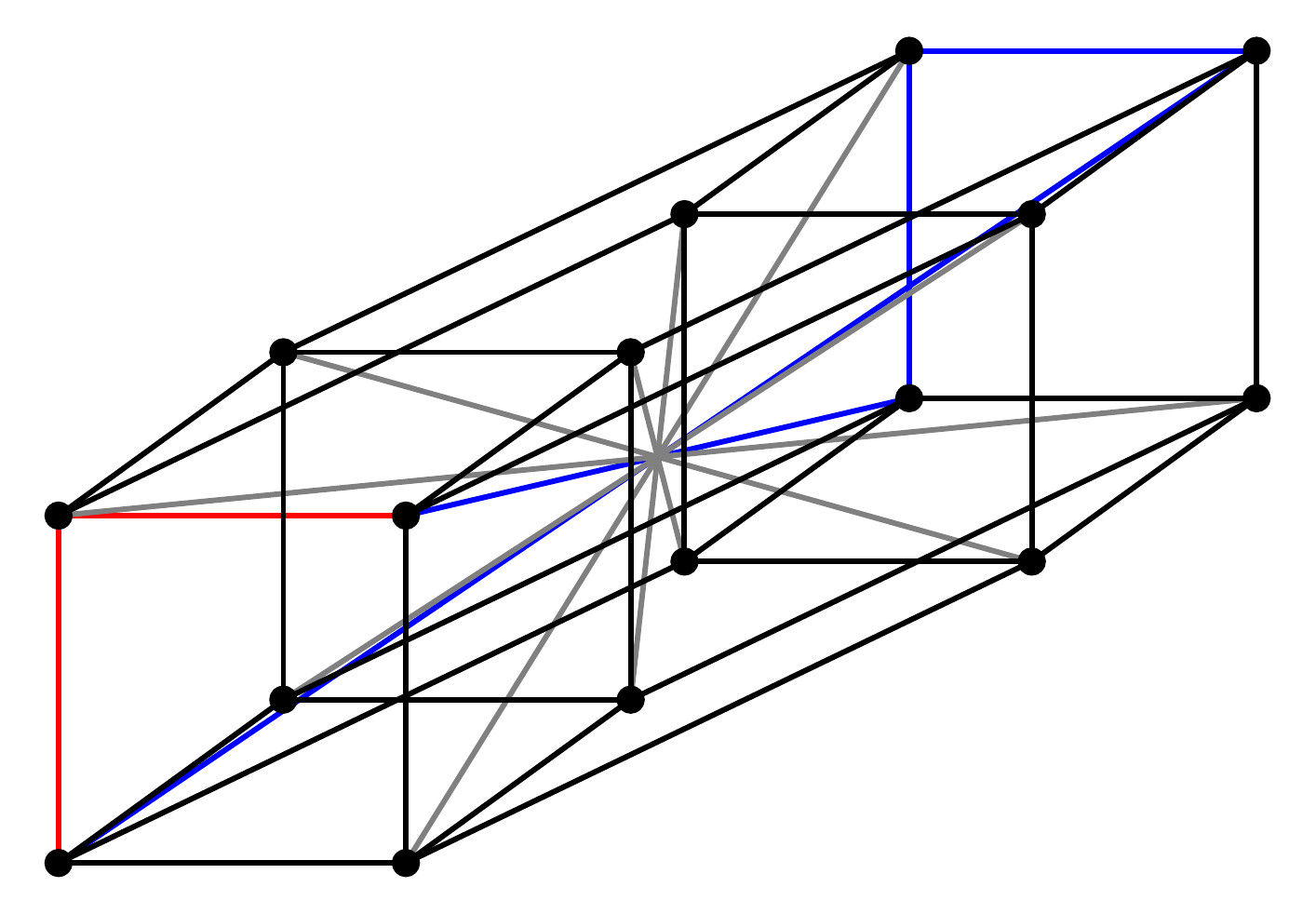}
    \caption{Cycle $C_5$.}
  \end{subfigure}
  \begin{subfigure}{.24\textwidth}
    \centering
    \includegraphics[width=\linewidth]{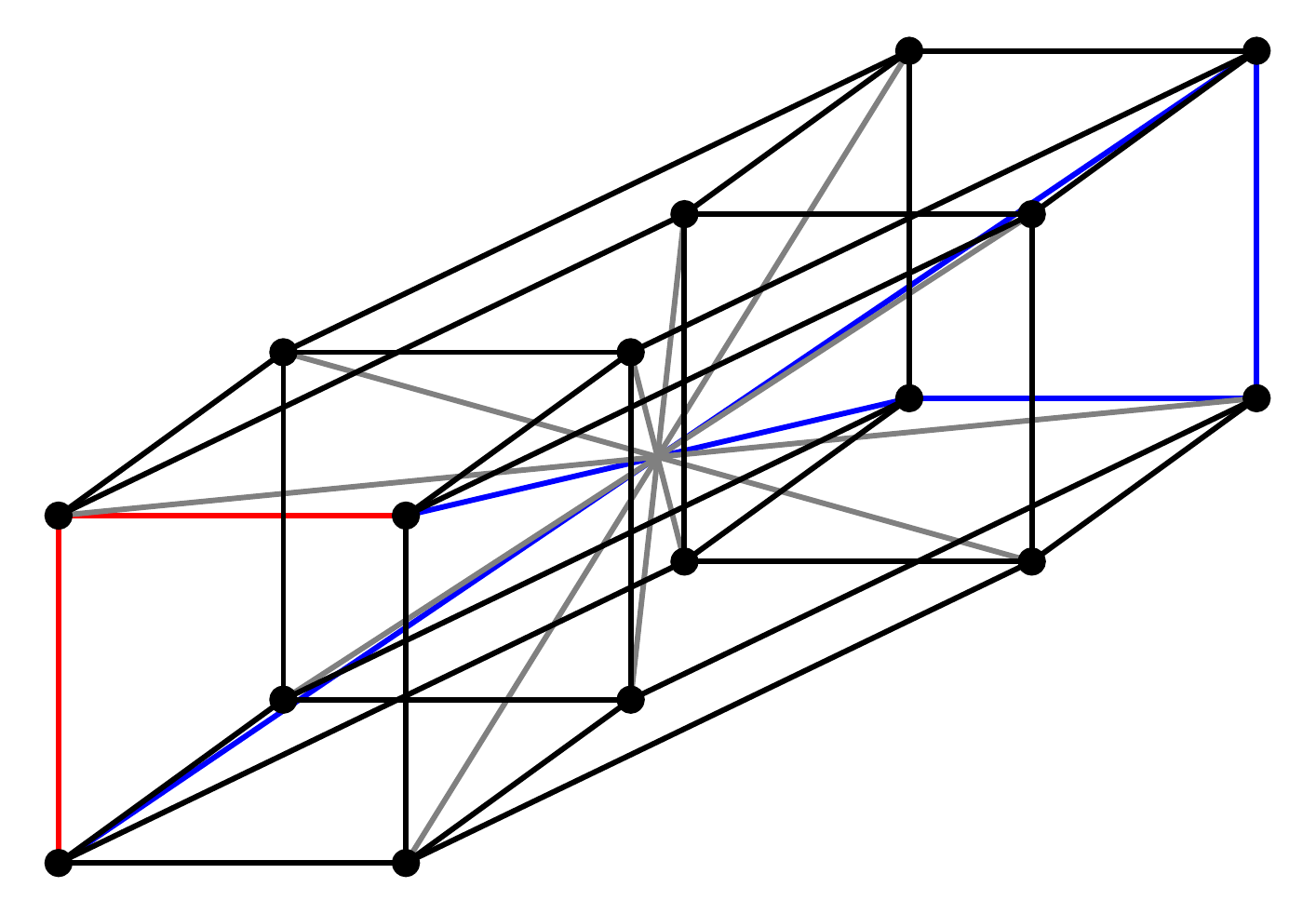}
    \caption{Cycle $C_6$.}
  \end{subfigure}
  \begin{subfigure}{.24\textwidth}
    \centering
    \includegraphics[width=\linewidth]{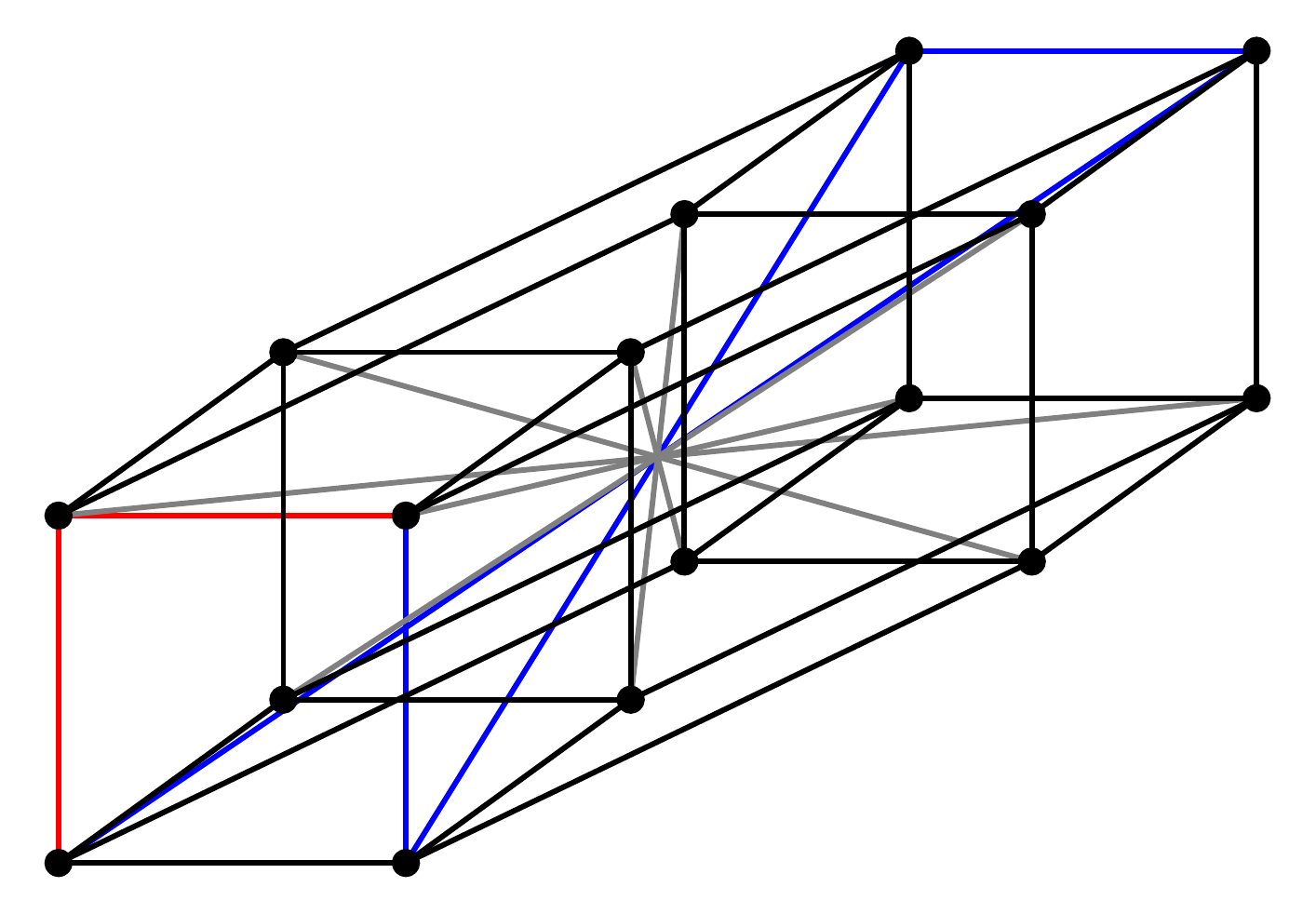}
    \caption{Cycle $C_7$.}
  \end{subfigure}
\caption{All non-equivalent $6$-cycles in $\FQ_5$ starting with path $p=(1,2)$. \label{fig:FQ2-6-cycles}}
\end{figure}

\textbf{If a cycle admits no diagonal edges} it has to be of one of the following forms:
\begin{equation*}
\begin{aligned}[t]
C_0 &= (1, 2, i, 1, 2, i) \quad \text{or} \\
C_2 &= (1, 2, i, 2, 1, i) \quad \text{or} \\
\end{aligned}
\qquad
\begin{aligned}[t]
C_1 &= (1, 2, 1, i, 2, i) \quad \text{or} \\
C_3 &= (1, 2, i, 1, i, 2).
\end{aligned}
\end{equation*}
The contribution of each cycle to $\lambda$ value is $(n-3)$.

\textbf{If a cycle admits only one diagonal edge} it has one of the five following forms:
\begin{equation*}
\begin{aligned}[t]
D_1 &= (1, 2, d, i, j, k) \quad \text{or} \\
D_3 &= (1, 2, i, j, d, k)  \quad \text{or} \\
\end{aligned}
\qquad
\begin{aligned}[t]
D_2 &= (1, 2, i, d, j, k)  \quad \text{or} \\
D_4 &= (1, 2, i, j, k, d).
\end{aligned}
\end{equation*}
These cycles exists only in the case of $\FQ_6$ and contribute $6$ to the $\lambda$ value of a graph.

\textbf{If a cycle admits two diagonal edges} it has to be of one of the following forms:
\begin{equation*}
\begin{aligned}[t]
C_4 &= (1, 2, d, 1, d, 2) \quad \text{or} \\
C_6 &= (1, 2, d, 2, 1, d) \quad \text{or} \\
\end{aligned}
\qquad
\begin{aligned}[t]
C_5 &= (1, 2, d, 1, 2, d) \quad \text{or} \\
C_7 &= (1, 2, 1, d, 2, d).
\end{aligned}
\end{equation*}
Each of these cycles contributes to the $\lambda$ value of a graph exactly $1$.

It is not hard to see that there does not exist a $6$-cycle on containing path $p$ and $3$ diagonal edges.


\begin{table}[ht!]
  \centering
  \resizebox{\ifbig 0.8 \fi \ifsmall 0.6 \fi \columnwidth}{!}{
  \begin{tabular}{c|c|c}
    \textbf{Label}         & \textbf{A representative of a \boldmath$6$-cycle}                                                                & \textbf{Contribution towards \boldmath$\lambda$} \\
    \hline
    $C_0$                  & $(1, 2, i, 1, 2, i) $ & $(n-3)$ \\
    \hline
    $C_1$                  & $(1, 2, 1, i, 2, i)$ & $(n-3)$ \\
    \hline
    $C_2$                  & $(1, 2, i, 2, 1, i)$ & $(n-3)$ \\
    \hline
    $C_3$                  & $(1, 2, i, 1, i, 2)$ & $(n-3)$ \\
    \hline
    $C_4$                 & $(1, 2, d, 1, d, 2)$ & $1$ \\
    \hline
    $C_5$                  & $(1, 2, d, 1, 2, d)$  & $1$ \\
    \hline
    $C_6$                  & $(1, 2, d, 2, 1, d)$ & $1$ \\
    \hline
    $C_7$                  & $(1, 2, 1, d, 2, d)$ & $1$ \\
    
  \end{tabular}}
  \caption{All non-equivalent $6$-cycles in $\FQ_n$ ($n \notin \{1,2,3,4,6\}$) starting with path $p=(1,2)$.}
  \label{tab:FQ2-6-cycles}
\end{table}

\subsubsection{Path \texorpdfstring{\ifbig \boldmath \fi $p$}{p} consists of a hypercube and diagonal edge}
In this case $p$ consists of a hypercube and a diagonal edge. Because of the equivalence of $6$-cycles we can fix $p$ as a path on $1$ and $d^*$.
Note, the order of edges in $p$ is not important as cycles through $(1,d^*)$ are the same as cycles through $(d^*\! \!,1)$, we just write them in an opposite order.
All of the calculation is summarized in \cref{tab:FQ2-6-cycles2} and \cref{fig:FQ2-6-cycles2}.

\begin{figure}[ht!]
  \centering
  \begin{subfigure}{.24\textwidth}
    \centering
    \includegraphics[width=\linewidth]{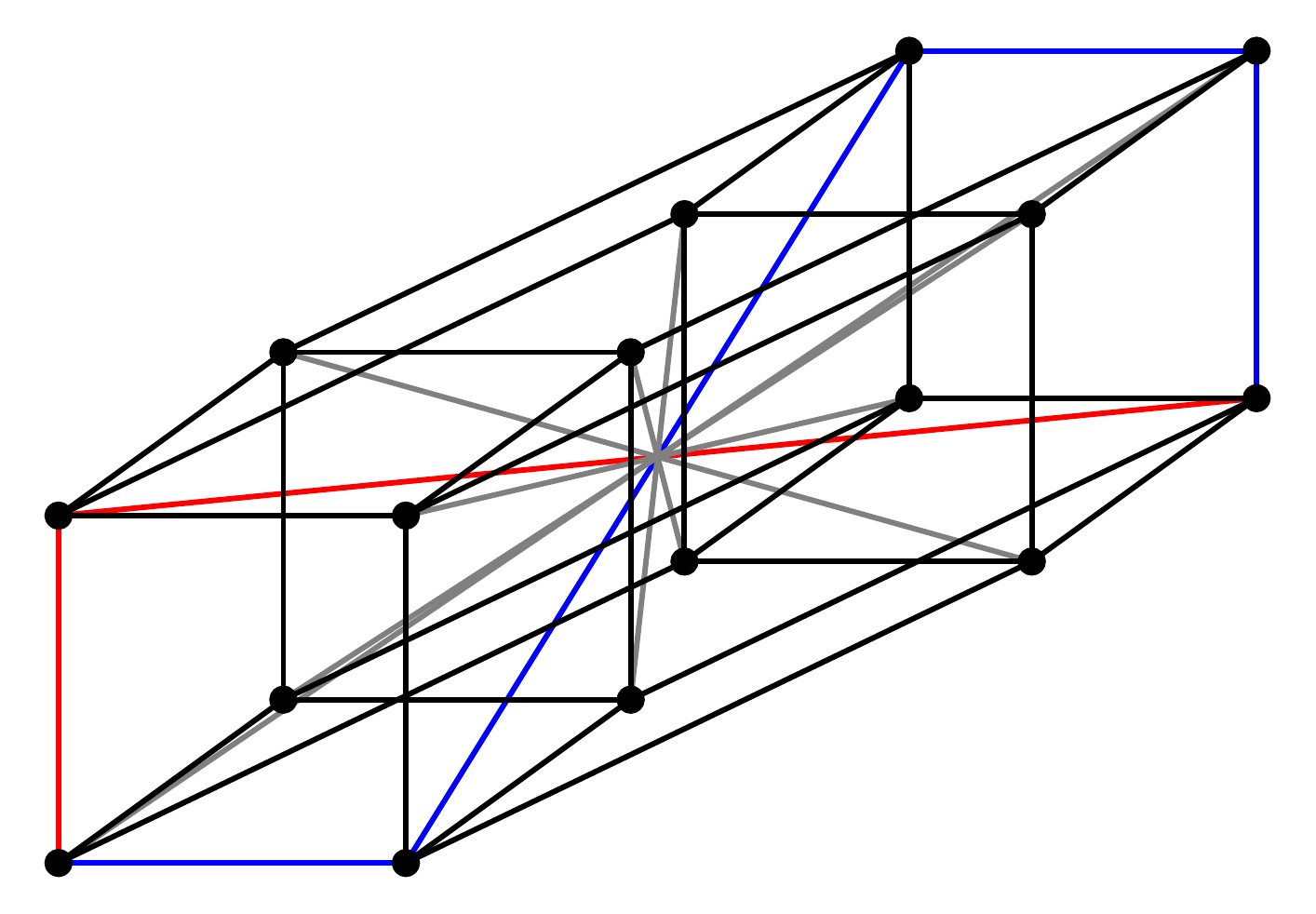}
    \caption{Cycle $C_a$.}
  \end{subfigure}
  \begin{subfigure}{.24\textwidth}
    \centering
    \includegraphics[width=\linewidth]{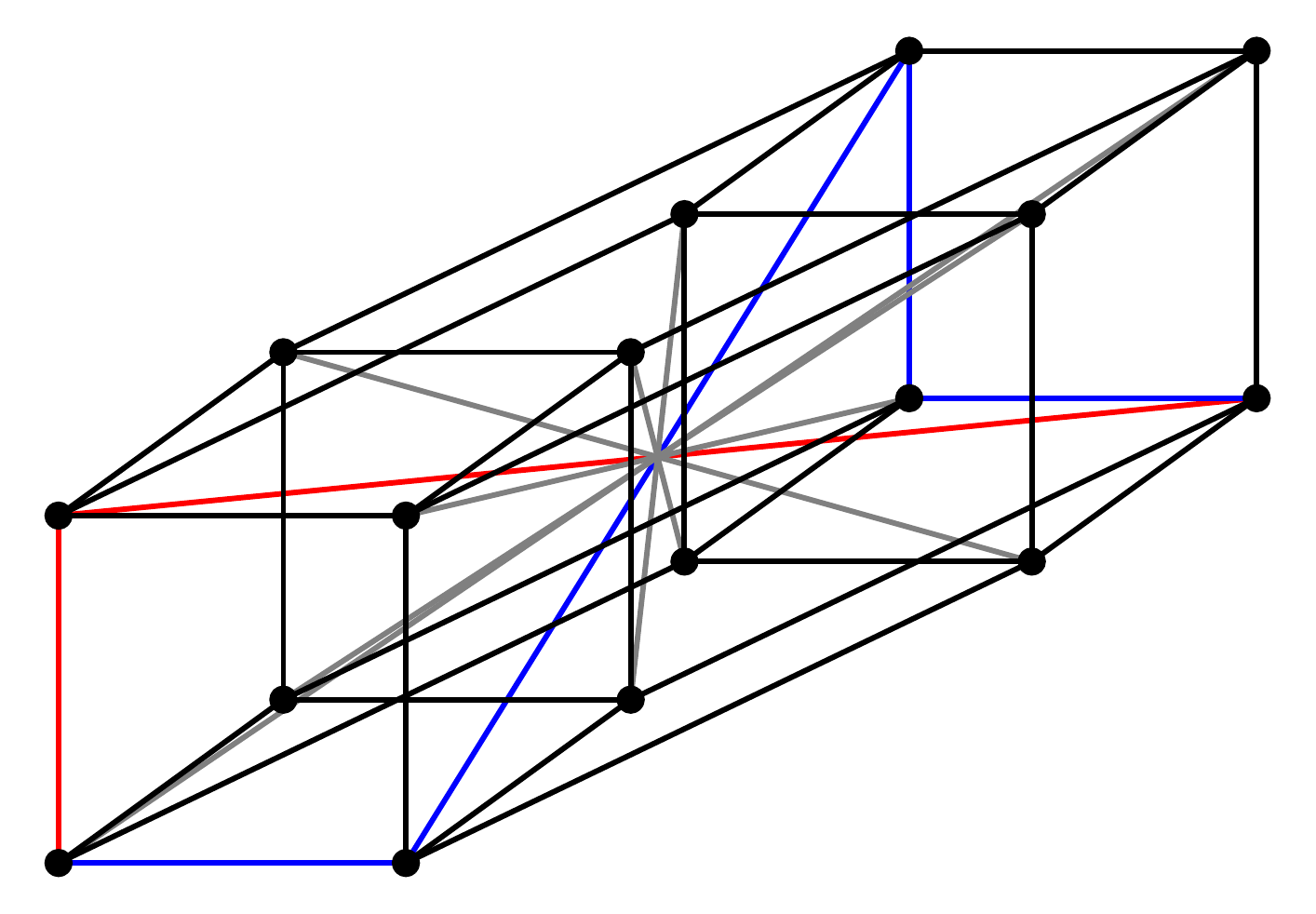}
    \caption{Cycle $C_b$.}
  \end{subfigure}
  \begin{subfigure}{.24\textwidth}
    \centering
    \includegraphics[width=\linewidth]{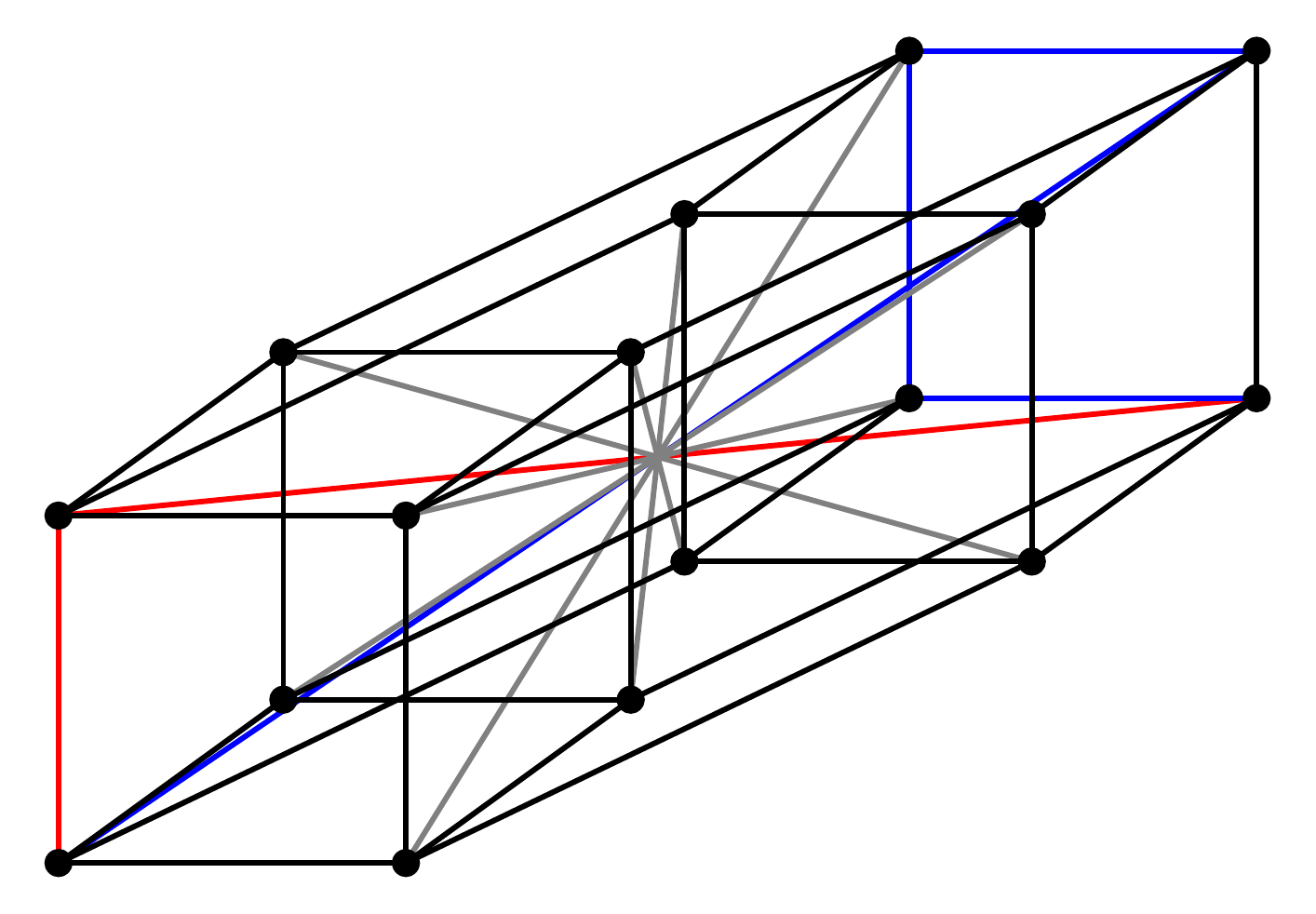}
    \caption{Cycle $C_c$.}
  \end{subfigure}
  \begin{subfigure}{.24\textwidth}
    \centering
    \includegraphics[width=\linewidth]{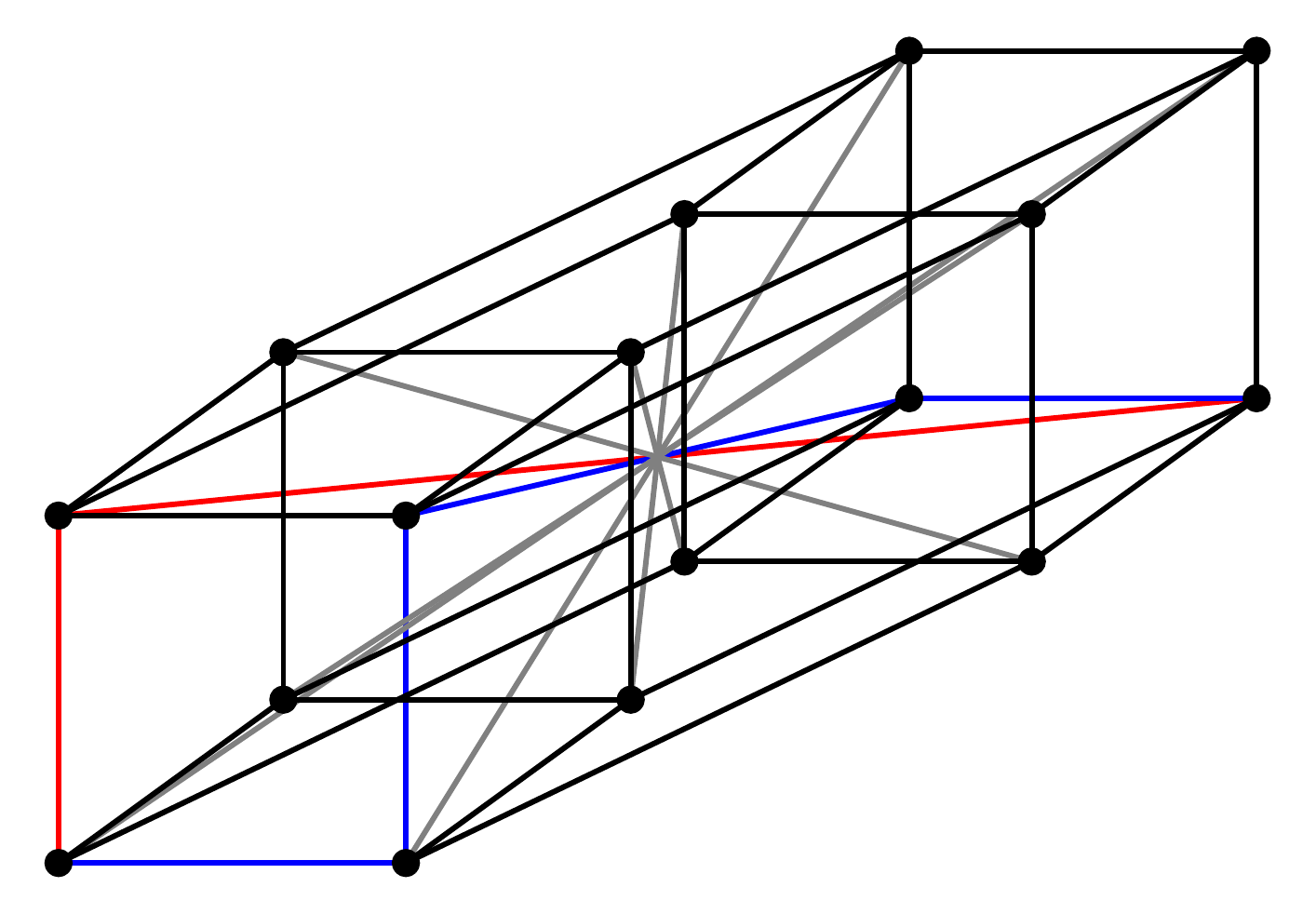}
    \caption{Cycle $C_d$.}
  \end{subfigure}
\caption{All non-equivalent $6$-cycles in $\FQ_5$ starting with path $p=(1,d^*)$. \label{fig:FQ2-6-cycles2}}
\end{figure}

\paragraph{If a cycle admits only one diagonal edge} it exists only in the case of $\FQ_6$ and is of one of the following forms:
\paragraph{If a cycle admits only one diagonal edge} it is of the following form:
$$ D_a = (1, d^* \! \!, m, i, j, k) .$$
This cycle contributes $24$ to the $\lambda$ value of a graph.

\paragraph{If a cycle admits two diagonal edges} it has one of the following forms:
\begin{equation*}
\begin{aligned}[t]
C_{a} &= (1, d^* \! \!, 1, m, d, m) \quad \text{or} \\
C_{c} &= (1, d^* \! \!, m, 1, m, d) \quad \text{or} \\
\end{aligned}
\qquad
\begin{aligned}[t]
C_{b} &= (1, d^* \! \!, m, 1, d, m) \quad \text{or} \\
C_{d} &= (1, d^* \! \!, m, d, 1, m).
\end{aligned}
\end{equation*}
These cycles contribute $(n-2)$ to the $\lambda$ value of a graph.

\paragraph{If a cycle admits three diagonal edges} it is of the following form:
$$ D_b = (1, d^* \! \!, m, d, j, d) .$$
This cycle exists only in the case of $\FQ_4$ and contributes $2$ to the $\lambda$ value of the graph.

\begin{table}[ht!]
  \centering
  \resizebox{\ifbig 0.8 \fi \ifsmall 0.6 \fi \columnwidth}{!}{
  \begin{tabular}{c|c|c}
    \textbf{Label}         & \textbf{A representative of a \boldmath$6$-cycle}                                                                & \textbf{Contribution towards \boldmath$\lambda$} \\
    \hline
    $C_a$                  & $(1, d^* \! \!, 1, m, d, m)$ & $(n-2)$ \\
    \hline
    $C_b$                  & $(1, d^* \! \!, m, 1, d, m)$ & $(n-2)$ \\
    \hline
    $C_c$                 & $(1, d^* \! \!, m, 1, m, d)$ & $(n-2)$ \\
    \hline
    $C_d$                  & $(1, d^* \! \!, m, d, 1, m)$  & $(n-2)$ \\
  \end{tabular}}
  \caption{All non-equivalent $6$-cycles in $\FQ_n$ ($n \notin \{1,2,3,4,6\}$) starting with path $p=(1,d^*)$.}
  \label{tab:FQ2-6-cycles2}
\end{table}

Using the above calculations we get the following result.

\cyclesFQpathOfLengthTwo*

A similar procedure can be extended to an arbitrary $m$-cycle.
Since folded cubes $\FQ_{2n}$ are bipartite (see \cref{thm:FQ-bipartite}), they admit only cycles of even length.
It is not hard to see that for an arbitrary folded cube $\FQ_n$ the value $\lambda$ of $[1, \lambda, 2k]$-cycle regularity is in $O(n^{k-1})$.
There is no precise characterization of $\lambda$ for all values of $k$.
For $k = 4$ we suspect the following.
\EightCyclesFQ*

\section{Recognition algorithms \label{section-RecognitionAlgorithms}}
In this section we describe recognition algorithms for all of the described graph families.
Since recognition algorithms for $I$-graphs and double generalized Petersen graphs are very similar, we analyze them together (see \cref{sec:recognizingIandDP}) and then we proceed with the analysis of the recognition algorithm of folded cubes (see \cref{sec:recognizingFQ}).
All of the algorithms are robust, which means that they receive as an input an arbitrary graph and determine if the graph is a part of the family of $I$-graphs, double generalized Petersen graphs or folded cubes. In the case the input graph belongs to the observed family, the algorithm provides its parameters, together with the certificate of correctness (exact isomorphsim).

\subsection{Recognizing \texorpdfstring{\ifbig \boldmath \fi $I$}{I}-graphs and double generalized Petersen graphs \label{sec:recognizingIandDP}}

This algorithm uses some of the properties of graph families discussed above and is presented in \cref{Algorithm1}.

Recall, the set of edges of an arbitrary $I$-graph or double generalized Petersen graph can be partitioned into $3$ parts, where edges from each part admit the same octagon value.

In particular, whenever the input graph $G$ is from the family of $I$-graphs or double generalized Petersen graphs, unless it is $[1,\lambda,8]$-cycle regular, we can immediately identify one of its edge orbits ($E_I, E_J$, or $E_S$) of size $|V(G)|/2$. \cref{thm:lin-Igr,thm:lin-DP} guarantee that such a graph is $[1,\lambda,8]$-cycle regular only in one of the cases described in previous sections (see \cref{fig:specialIgraphs,fig:DP-graphs}).
Since the octagon value of each edge is computed in constant time and there is a finite number of the $[1,\lambda,8]$-cycle regular $I$-graphs and double generalized Petersen graphs, the first part of \cref{cor:lin} holds.

\begin{algorithm}[ht!]
  \caption{Recognition procedure for $I$-graphs or for $\DP$ graphs, depending on the subprocedure \textsc{Extend}$(G,U)$.   \label{Algorithm1}}
  \begin{algorithmic}[1]
    \Require connected cubic graph $G$
    \State $\mathcal{P} \gets$ an empty dictionary \label{alg:1}
    \For{$e \in E(G)$} \label{alg1:octagon} \label{alg:2}
    \State $s = \textsc{octagonValue}(e)$ \Comment{calculate $\sigma(e)$} \label{alg:3}
    \State $\mathcal{P}[s].\text{append}(e)$ \label{alg:4}
    \EndFor
    \State $U \leftarrow$ an item of $\mathcal{P}$ with minimum positive cardinality \label{alg:5}
    \If{$G[U]$ is a $2$-factor}
    \State $U \gets \{ e\mid e \in E(G), \, e\text{ is adjacent to an edge of } U\}$ \Comment{$U$ is a perfect matching in $G$}\label{alg:7}
    \EndIf
    \State \Return \textsc{Extend}$(G,U)$
  \end{algorithmic}
\end{algorithm}

\subsubsection{Description of \texorpdfstring{\cref{Algorithm1}}{Algorithm 1}}
We first note, that if $G$ is not cubic then it does not belong to observed graph families. Since checking whether a graph is cubic takes linear time we simply assume that the input graph is cubic.
Furthermore,
if $G$ is not connected then it can only be a member of a family of $I$-graphs whenever it consists of multiple copies of a smaller $I$-graph $G'$.
However, this case can easily be resolved by separately checking each part, so we can assume that the input graph is connected.
\cref{Algorithm1} consists of the following $3$ parts.
 
\paragraph{Partitioning the edge partition set with respect to octagon value.}
        The algorithm determines the octagon value of each edge $e \in E(G)$ and builds partition set $\mathcal P$ of graph edges (see lines \ref{alg:1}  -- \ref{alg:4}).
        Since $G$ is cubic and all $8$-cycles containing edge $e$ consist of edges which are at distance at most $4$ from $e$, it is enough to check a subgraph $H$ of $G$ of order at most $62$, and calculate the octagon value of edge $e$. Therefore, calculation of $\textsc{octagonValue}(e)$ takes $O(1)$ time for each edge $e$ and this whole part is performed in $\Theta(|E(G)|)$ time.

\paragraph{Identifying the edge-orbit which corresponds to the set of spokes.}
        Throughout lines~\ref{alg:5} -- \ref{alg:7} we then  determine
        the edge-orbit which corresponds to the set of spokes.
        It is easy to see that this requires additional $O(|E(G)|/3)$ time.

\paragraph{\boldmath Using set $U$ for determining parameters of a given graph.}
       The algorithm uses computed set $U$ to determine exact isomorphism between $G$ and an $I$-graph or a double generalized Petersen graph, if it exists.
        This procedure differentiates regarding the graph family we are consiedering. The related procedure \textsc{Extend}$(G,U)$ is performed in $\Theta(|E(G)|)$ time.

\subsubsection{Subprocedure \texorpdfstring{\textsc{Extend} \ifbig \boldmath \fi$(G,U)$}{Extend(G,U)} for \texorpdfstring{\ifbig \boldmath \fi $I$}{I}-graphs} \label{sec:extendI}
It is easy to see that \textsc{Extend}$(G,U)$ can safely reject $G$, if $|V(G)|$ is not divisible by $2$.
For this subprocedure, set $n=\vert V(G) \vert / 2$ and
$G[U]$ by $H$. Again, there are two possibilities.

\begin{description}
  \item[\boldmath$H = G$.] In this case graph $G$ has a constant octagon value. All ten $I$-graphs with constant octagon value are depicted in \cref{fig:specialIgraphs}, and checking $G$ against them takes constant time.

  \item[\boldmath$H$ is of order $2n$ and is $1$-regular.]
        Since $U$ is a perfect matching of $G$ the set $E(G) \setminus U$ is a collection of cycles.
        If $G$ is an $I$-graph, then there exist positive integers $i, j, l_1, l_2$ with $j\leq i$ such that there are $j$ cycles of length $l_1$ and $i$ cycles of length $l_2$.  It remains to determine parameter $k$ and check whether $G\simeq I(n,j,k)$.  This procedure is described in \cref{sec:isomorphism}. Again, the whole procedure \textsc{Extend}$(G,U)$ is performed in linear time.

\end{description}

\subsubsection{Subprocedure \texorpdfstring{\textsc{Extend}\ifbig \boldmath \fi $(G,U)$}{Extend(G,U)} for double generalized Petersen graphs} \label{sec:extendDP}

It is easy to see that \textsc{Extend}$(G,U)$ can safely reject $G$, if $|V(G)|$ is not divisible by 4.
Let us denote $\vert V(G) \vert / 4$ by $n$ and $G[U]$ by $H$. There are two possibilities.

\begin{description}
  \item[\boldmath$H = G$.] In this case graph $G$ has a constant octagon value. Therefore it is from the family of double generalized Petersen graphs whenever it is isomorphic to one of the $[1,\lambda,8]$-cycle regular members of the family. Since there are only two such double generalized Petersen graphs (see \cref{fig:DP-graphs}), this procedure is performed in constant time.

  \item[\boldmath$H$ is of order $4n$ and is $1$-regular.]

        Since $U$ is a perfect matching of $G$ the set $E(G) \setminus U$ is a collection of cycles. In order for $G$ to be a double generalized Petersen graph, there must be two cycles of length $n$ and $m$ cycles of length $a$, where $m, a$ are positive integers. It remains to determine parameter $k$. Since we already uniquely identified the set of spokes $U$, this can be achieved easily (for the detailed procedure see \cref{sec:isomorphism}), and hence the whole procedure \textsc{Extend}$(G,U)$ is performed in linear time.
        
\end{description}

In both versions of subprocedure $\textsc{Extend}(G,U)$, if graphs $H$ and $G$ do not satisfy any of the above possibilities, or if the isomorphism does not exist, the algorithm returns False. If there is an isomorphism, then the algorithm returns parameters of the corresponding $I$-graph or double generalized Petersen graph.

\subsubsection{Finding exact isomorphisms} \label{sec:isomorphism}
In this section we provide a procedure for determining exact isomorphisms with $I$-graphs or double generalized Petersen graphs.

\subsubsection*{Exact isomorphism of double generalized Petersen graphs}
Since $U$ is a perfect matching of $G$, the set $E(G) \setminus U$ is a collection of cycles.
There are two cycles of length $n$ and $m$ cycles of length $a$.
To determine parameter $k$ first fix one $n$-cycle as $(u_0, \dots, u_{n-1})$.
Using $U$ determine their neighbours $w_i$.
With the help of a cycle from $E(G) \setminus U$ that goes through vertex $w_0$, label its neighbours, different than $u_0$, as $y_k$ and $y_{-k}$, in an arbitrary way. Again using a perfect matching $U$ determine $x_k$ and $x_{-k}$.
Now, using an $n$-cycle from $E(G) \setminus U$, that goes through $x_k$ and $x_{-k}$, label vertices $x_0, x_1, \dots , x_{n-1}$
in such a way that $p = x_{-k}, x_{-k + 1}, \dots x_0, x_1, \dots, x_k$ is a path using an even number of edges. From this determine also parameter $k$.
In the case of $n$ being even we get two paths from $x_{-k}$ to $x_k$ of even length. Both paths will give us a different value for parameter $k$, but since \cref{thm:isomorphDP} holds, their graphs are isomorphic.
Now that all vertices $x_i$ are labeled, determine all vertices $y_i$ using $U$.
At this point it remains to check if all cycles from $E(G) \setminus U$ respect computed labeling.
In the case they do a graph $G$ is isomorphic to $\DP(n,k)$, if they do not, then $G$ is not from our family.

It is easy to see that this procedure is completed in $\Theta(|E(G)|)$ time.

\subsubsection*{Exact isomorphism of \texorpdfstring{\ifbig \boldmath \fi $I$}{I}-graphs}
\cref{alg:isomI} is an extension of the subprocedure \textsc{Extend}$(G,U)$ of \cref{Algorithm1} and provides an exact isomorphism from a graph $G$ to an $I$-graph, if there exists. When considering the given algorithm one should note that all the subscripts are given modulo $n$ where $n$ equals $|V(G)|/2$.

\begin{algorithm}[ht!]
  \caption{Finding exact isomorphisms with an $I$-graph. \label{alg:isomI}}
  \begin{algorithmic}[1]
    \Require connected cubic graph $G$ on $2n$ vertices, together with a perfect matching $U$
    \State $k,j,l_1,l_2 \gets$ number of cycles in the set $ F = E(G) \setminus U$ and their lengths \label{alg2:line1}
    \State Take a $l_1$-cycle $C$ from $F$ and label its vertices as $(u_0,u_j,\dots, u_{(l_1 - 1)j})$.
    \For{both cycles of type $C^*$ through $u_0 u_j$ \label{alg2:line03}}
    \State label the corresponding vertices $w_0, w_k, u_k, u_{k+j}, w_{k+j}, v_j$
    \For{$e \in \{u_0 u_j, u_j u_{2j}, \dots , u_{(l_1-1)j} u_0 \}$ \label{alg2:line2}} 
    \State Identify cycle $C'$ through $e$ of form $C^*$ respecting current labeling.
    \State Label unlabeled vertices accordingly.
    \EndFor
    \For{any $l_2$-cycle $C$}
    \State find an edge of $C$ labeled on both ends and label the rest of $C$.
    \State Using $U$ label remaining vertices. 
    \EndFor
    \State If labeling is consistent \textbf{return True}.
    \EndFor
    \State \textbf{Return False}.
  \end{algorithmic}
\end{algorithm}
Graph $G$ and set $U$ meet the requirements as this was checked already by \cref{Algorithm1}.
Let $n = |V(G)| / 2$. During the whole analysis we consider indices modulo $n$.

Because $U$ is the perfect matching, $F = E(G) \setminus U$ is the union of at least two disjoint cycles. 
Since $G$ is connected, $F$ admits either $2$ cycles of length $n$ or more cycles of two different lengths. 
On line~\ref{alg2:line1} we set the number of longer cycles as $j$ and fix their lengths as $l_1$ and the number of shorter cycles as $k$ with the length $l_2$.
This is done in time $O(|E(G)|)$.

Then we take an $l_1$-cycle $C$ from $F$ and fix it on the outer rim as $(u_0,u_j,\dots,$ $u_{(l_1 - 1)j})$.
For an edge $u_0u_j$ we construct a cycle of form $C^*$ and label its vertices accordingly.
Since the orientation of inner edges can be chosen in two ways and only one of these two produce the desired $8$-cycle, we repeat the procedure at most $2$ times.
When $C^*$ is determined, we cyclicly go through the rest of the edges of $C$, identify their corresponding cycle $C'$ of form $C^*$ and label its vertices.
Now we take one $l_2$-cycle and find an edge with both incident vertices already labeled and we extend this labeling through the whole cycle.
At the end, we use $U$ to label all remaining vertices.
If the labeling is not consistent then either $C^*$ at line~\ref{alg2:line03} was not oriented properly or $G$ is not an $I$-graph.
In the case of cycles from $F$ being of the same lengths, constructing $C^*$ actually constructs a $4$-cycle, which labels just $2$ new vertices instead of $6$, but the procedure still returns a labeled $I$-graph. 
This procedure is performed in $O(|E(G)|)$ time.

It is easy to see that
\cref{alg:isomI} labels all vertices.

\subsection{Recognizing folded cubes \label{sec:recognizingFQ}}
Since folded cubes are arc-transitive and $[2,\lambda,4]$ and $[2, \lambda, 6]$-cycle regular, using $4$ or $6$-cycles would not help us with the recognition problem.
Therefore we constructed a solution which depends solely on the nice structure of folded cubes.

Before we start with the description of the algorithm, we need to fix the following notation.
For vertices $u,v$ of graph $G$, the set $W_{uv}$ represents all the vertices of a graph that are closer to $u$ than $v$.

The algorithm \cref{alg:recognizingFQ} consists of two parts.
In the first one we recognize the set of diagonal edges which is then used in the second one (see \cref{alg:FQsubprocedure}) to determine if the input graph is isomorphic to a folded cube.

\begin{algorithm}[ht]
  \caption{Determining diagonal edges of a folded cube. \label{alg:recognizingFQ}}
  \begin{algorithmic}[1]
    \Require $n$-regular, connected graph $G$ on $2^{n-1}$ vertices.
    \State $S_1,\dots, S_{n-1}\gets$ empty sets, 
    $S_{n}\gets $ arbitrary edge of $E(G)$, \label{ln:1}
    $i\gets n$
    \While{$i>1$ \label{alg:line2While}}
    \State fix $uv\in S_i$, and move it from $S_i$ to $S_{1}$
    \For{arbitrary edges $au, vb \in E(G) \setminus \{uv\}$ \label{alg:line4For}}
    \If{$ab \in E(G)$}
    \State add $ab$ to $S_{\deg(a)-1}$ and (if possible) remove it from $S_{\deg(a)}$ \label{ln:6}
    \State delete edges $au, vb$ from $G$
    \EndIf
    \EndFor
    \State $i \gets \min\{j\mid S_j\neq \emptyset\}$
    \EndWhile
    \State \Return $\textsc{Extend}(G,S_1)$.
  \end{algorithmic}
\end{algorithm}

If $G$ is not connected, $n$-regular on $2^{n-1}$ vertices, then it does not belong to the family of folded cubes. Since checking these conditions takes linear time, we simply assume that the input graph of \cref{alg:recognizingFQ} meets the requirements. 

\paragraph{Determining set of diagonal edges.}
We start \cref{alg:recognizingFQ} by initializing empty sets $S_1, \dots , S_{n-1}$, setting $i = n$ and $S_n = \{uv\}$, where $uv$ is an arbitrary edge in $E(G)$. 
The \texttt{while}-loop starting on line~\ref{alg:line2While} runs until all $S_2, \dots, S_n$ are emptied and invokes the following procedure.
Take a diagonal edge $uv$ from the set $S_i$, move it to the set $S_{1}$ and start searching for $4$-cycles going through it, i.e. determining diagonal edges that lie in the same $4$-cycle as $uv$.
This is performed with the \texttt{for}-loop, on line~\ref{alg:line4For}.
Whenever we find a $4$-cycle $(ua,uv,vb,ab)$,
we remove the non-diagonal edges $ua$ and $vb$ from the graph, 
followed by reassigning both corresponding diagonal edges to the sets matching their (now decreased) degree. 
After determining all of the $4$-cycles through $uv$ we conclude the current cycle of the \texttt{while}-loop by setting $i$ to be the minimum index of currently non-empty sets $S_1, \dots , S_n$. 
As the \texttt{while}-loop is concluded, i.e. all identified diagonal edges are assigned to $S_1$, 
the only thing left to check is if $G$ is isomorphic to an $n$-dimensional folded cube.
This is done using subprocedure $\textsc{Extend}(G,S_1)$, which relies on the linear recognition algorithm of hypercubes, by Hammack et.~al.~\cite{Hammack/Imrich/Klavzar:2011}.
For a better understanding of the algorithm we point out the following observations:
\begin{itemize}
\item Folded cubes are arc-transitive, so we can fix an arbitrary edge $uv \in E(G)$ to be the initial diagonal edge and add it to $S_n$, see line~\ref{ln:1}.
Every subsequent diagonal edge is identified as an opposite edge of an already discovered diagonal in some $4$-cycle of $G$.

\item In a folded cube each vertex is incident to exactly one diagonal edge and each non-diagonal edge is incident to exactly two diagonal edges (one from each endpoint). 
It follows that after we identify any $4$-cycle, say $(ua,uv,vb,ab)$, with $uv$ being the initial diagonal edge, edges $ua$ and $vb$ cannot appear in any other $4$-cycle containing some different diagonal edges. 
This is the reason why they may safely be discarded, after both corresponding diagonal edges ($uv$ and $ab$) have been identified.
\item
At the start of any cycle of the \texttt{while}-loop, the degree of vertices of an arbitrary diagonal edge is the same.
This is easily observed as any deletion involves two non-diagonal edges in a fixed $4$-cycle, say $(ua,uv,vb,ab)$,  therefore the degree of both endpoints of $uv$, as well as of $ab$, decreases exactly by $1$.
This also implies that reassigning those edges in line \ref{ln:6} is non-ambiguous.
\item 
At each step of the algorithm the set of determined diagonal edges is partitioned into sets $S_1, \dots , S_{n}$, where $S_i$ ($i = 1, \dots , n$) consists of diagonal edges $uv$ with the degree $i$ of $u$ and $v$, in the current graph ($G$ without some edges).
\end{itemize}

The whole \texttt{while}-loop is repeated $|E(G)|/n$ times (i.e. once for every diagonal edge) and the \texttt{for}-loop needs $O(i^2)$ time to complete.
Therefore the running time of this part is $O(|E(G)|/n\cdot (n/2)^2) = O(|E(G)| \cdot \log |V|)  = o(N \log N)$, where $N = |V(G) + E(G)|$.

\begin{algorithm}[ht!]
  \caption{Subprocedure $\textsc{Extend}(G,D)$ for folded cubes.   \label{alg:FQsubprocedure}}
  \begin{algorithmic}[1]
    \If{$H = G[E(G) \setminus D]$ not bipartite or $|E(H)| \neq 2^{n-1}n$}
    \State return \textbf{false}. 
    \EndIf
    \State Take an arbitrary edge $uv \in E(H)$. \label{algFQ:lineRecursion}
    \If{edges between $H[W_{uv}]$ and  $H[W_{vu}]$ do not define a perfect matching corresponding to an isomorphism between $H[W_{uv}]$ and  $H[W_{vu}]$}
    \State Return \textbf{false}.
    \ElsIf{$H[W_{uv}] = K_1$}
    \State Append label $0$ to vertex in $W_{uv}$ and $1$ to vertex in $W_{vu}$.
    \Else 
    \State Append label $0$ to vertices in $W_{uv}$ and $1$ to vertices in $W_{vu}$. 
    \State Repeat line~\ref{algFQ:lineRecursion} with $H[W_{uv}]$ and with $H[W_{vu}]$.
    \EndIf
    \State Check if diagonal edges $D$ respect the labeling of vertices in $G$.
  \end{algorithmic}
\end{algorithm}
\paragraph{Determining if \boldmath$G$ is isomorphic to a folded cube.}
To determine if a given graph is isomorphic to the hypercube, we need to check that $H = G[E(G) \setminus S_1]$ is an $(n-1)$-dimensional hypercube, label it
and at the end inspect vertices joined by diagonal edges $S_1$,
if they are labeled correctly.
This procedure is described in \cref{alg:FQsubprocedure} and uses
the same approach as the linear recognition algorithm for hypercubes introduced by Hammack et.~al \cite{Hammack/Imrich/Klavzar:2011}. For the sake of completeness we include the algorithm, while omitting its analysis.

At the end we see that the whole recognition algorithm runs in $o(N \log N)$ time, which proves the second part of \cref{cor:lin}.
\section{Conclusion}
It is easy to observe that the gap between the linear lower bound and our upper bound for recognizing folded cubes can be expressed in a sub-logarithmic multiplicative factor. It is hence natural to ask whether the recognition can be done faster, i.e. in linear time.
From the structural point of view, this paper focuses on determining various cyclic parameters for some well-studied families. 
Namely, for the folded cubes we describe the value of $\lambda$ in the context of their $[1,\lambda,4]$, $[1,\lambda,6]$ and $[2,\lambda,6]$-cycle regularity.

For the families of $I$-graphs and double generalized Petersen graphs, we
also settled $[1,\lambda,8]$-cycle regularity.
We also  make an observation of the isomorphism property of certain double generalized Petersen graphs (stated in \cref{thm:isomorphDP}), which is a step towards characterizing them.
We note that full characterization of isomorphisms for the family of double generalized Petersen graphs then remains open.

It seems that partitioning edges with respect to  the number of distinct cycles of certain type those edges belong to led us to the construction of two linear-time recognition algoritms:
$I$-graphs and double generalized Petersen graphs.
To the best of our knowledge, in addition to this work,  such a procedure was so far only used in \cite{Krnc/Wilson:2017} for the family of generalized Petersen graphs.
We believe that a similar approach should give interesting results for other parametric graph families of bounded degree, such as 
Johnson graphs, rose window graphs, Tabačjn graphs, $Y$-graphs, or $H$-graphs.




\paragraph*{Acknowledgements}
The authors would like to thank 
Assist.~Prof.~Nino Bašić, 
Prof.~Štefko Miklavič, 
Prof.~Martin Milanič, and
Prof.~Tomaž Pisanski, for help and fruitful discussions, which led to the improvement of this work.
The second author acknowledges partial support of the Slovenian Research Agency (research programs P1-0383, P1-0297 and research projects J1-1692, J1-9187) and the European Commission for funding the InnoRenew CoE project (Grant Agreement \# 739574) under the Horizon 2020 Widespread-Teaming program and the Republic of Slovenia.

\bibliography{paper}

\end{document}